\newtheorem{theorem}{Theorem}[section]
\newtheorem{lemma}[theorem]{Lemma}
\newtheorem{remark}[theorem]{Remark}
\newtheorem{proposition}[theorem]{Propositon}
\newtheorem{corollary}[theorem]{Corollary}
\numberwithin{equation}{section}
\def\author#1{\gdef\autrun{\def\and{\unskip, }#1}\gdef\@author{#1}}
\def\code#1{\par\medskip
\noindent\textbf{Mathematics Subject Classification (2010).} #1}
\begin{document}
\selectlanguage{english}

\title{Asymptotic observability identity for the heat equation in $\mathbb{R}^d$}

\author{Gengsheng Wang\thanks{Center for Applied Mathematics, Tianjin University,
Tianjin, 300072, China. \emph{Email: wanggs62@yeah.net}}\quad
Ming Wang\thanks{School of Mathematics and Physics, China University of Geosciences, Wuhan, 430074,  China. \emph{Email: mwangcug@outlook.com}}\quad
Yubiao Zhang\thanks{Center for Applied Mathematics, Tianjin University,
Tianjin, 300072, China. \emph{Email: yubiao\b{ }zhang@yeah.net}}
}

\date{}

\maketitle
\begin{abstract}
We build up   an asymptotic observability identity for
 the heat  equation in the whole space.
 It says that one can approximately recover a solution, through observing it over some countable lattice points in the space and at one time.
 This asymptotic  identity
     is a natural extension of   the well-known Shannon-Whittaker sampling theorem \cite{Shannon,Whittaker}.
 According to  it,   we obtain  a kind of feedback null approximate controllability for  impulsively controlled heat equations.
 We also obtain a weak asymptotic observability identity with finitely many observation
lattice points. This  identity holds only for some solutions to the heat equation.

\end{abstract}

\begin{keywords}
Heat equation, sampling theorem, observability, controllability

\end{keywords}

\code{
35Q93, 93B07
}

\section{Introduction}

The well-known Shannon-Whittaker sampling theorem \cite{Shannon,Whittaker} (see also \cite{H,LM,P,Y}) says that
any function $f$ in the Wiener class (i.e., $f\in L^2(\mathbb{R})$
and  its Fourier transform\footnote{ The Fourier transform of a function $f \in L^1(\mathbb R^d;\mathbb C)$ is given by
$$
\mathcal{F}(f)(\xi)=\hat{f}(\xi):= (2\pi)^{-\frac{d}{2}} \int_{\mathbb{R}^d} e^{-ix\cdot\xi}f(x)\,\mathrm dx,~\xi\in\mathbb R^d.
$$
This transform is extended to all  tempered distributions in  $\mathcal S^\prime(\mathbb R^d;\mathbb C)$ in the usual way.} $\hat{f}$  has a compact support in $\mathbb{R}$) can be completely determined by its values on some  lattice points in $\mathbb{R}$.
Given $N>0$ and  $n=(n_1,\dots,n_d)\in\mathbb Z^d$, we define
\begin{align}\label{equ-basis}
f_{N,n}(x):=\prod_{j=1}^d\frac{\sin\pi(Nx_j-n_j)}{\pi(Nx_j-n_j)}, \quad x=
(x_1,\dots,x_d)\in \mathbb{R}^d,
\end{align}
and
\begin{eqnarray}\label{wang1}
\mathscr{P}_{N} :=
\left\{ f\in L^2(\mathbb{R}^d) ~:~ \mbox{supp}\, \hat{f} \subset Q_{\pi N}(0) \right\},
\end{eqnarray}
where $Q_{\pi N}(0)$ denotes the closed cube in $\mathbb{R}^d$, centered at the origin
and of side length $2\pi N$.
(Some properties on $\{f_{N,n}\}_{n\in\mathbb{Z}^d}$ are given in Lemma \ref{lem-orth}
in Appendix of this paper.)
A high-dimensional version of the Shannon-Whittaker sampling theorem is  as follows:
\begin{theorem}\label{lem-shannon}
For each  $N>0$, any function $f\in\mathscr{P}_N$ satisfies that
\begin{eqnarray}\label{equ-shannon}
f(\cdot) = \sum_{n\in \mathbb{Z}^d} f({n}/{N})
f_{N,n}(\cdot)\;\;\mbox{in}\;\;L^2(\mathbb{R}^d)
\;\;\mbox{and}\;\;
\int_{\mathbb{R}^d}|f(x)|^2\,\mathrm dx = N^{-d} \sum_{n\in \mathbb{Z}^d} \big|f({n}/{N})\big|^2.
\end{eqnarray}
\end{theorem}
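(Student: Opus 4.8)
The natural approach is to conjugate everything by the Fourier transform and reduce the statement to the elementary theory of Fourier series on the cube $Q_{\pi N}(0)=[-\pi N,\pi N]^d$. First I would record two preliminaries: (a) since $f\in\mathscr P_N$ means $\hat f\in L^2(\mathbb R^d)$ with $\operatorname{supp}\hat f\subset Q_{\pi N}(0)$, in particular $\hat f\in L^1(\mathbb R^d)$, so by Fourier inversion $f$ is bounded and continuous and the samples $f(n/N)$ are well defined; (b) a direct computation, essentially the content of Lemma~\ref{lem-orth}, shows that
\[
\widehat{f_{N,n}}(\xi)=N^{-d}(2\pi)^{-d/2}\,e^{-i\,n\cdot\xi/N}\,\mathbf 1_{Q_{\pi N}(0)}(\xi),\qquad n\in\mathbb Z^d,
\]
because $f_{N,n}$ is, up to the constant $(2\pi)^{-d/2}N^{-d}$, the inverse Fourier transform of $\mathbf 1_{Q_{\pi N}(0)}$ translated in $x$ by $n/N$. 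By the Plancherel theorem it then suffices to prove both identities after applying $\mathcal F$, i.e.\ with $f$ and $f_{N,n}$ replaced by $\hat f$ and $\widehat{f_{N,n}}$.

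The core of the argument is that $\{e^{i\,n\cdot\xi/N}\}_{n\in\mathbb Z^d}$ is a complete orthogonal system of $L^2(Q_{\pi N}(0))$, each vector having squared norm $(2\pi N)^d$ — indeed the frequency spacing $1/N$ is exactly dual to the side length $2\pi N$, and after the substitution $\xi\mapsto\xi/N$ this is the standard trigonometric basis on $[-\pi,\pi]^d$. Expanding $\hat f\in L^2(Q_{\pi N}(0))$ in this basis gives $\hat f=\sum_{n}c_n e^{i\,n\cdot\xi/N}$ with $c_n=(2\pi N)^{-d}\int_{Q_{\pi N}(0)}\hat f(\xi)e^{-i\,n\cdot\xi/N}\,\mathrm d\xi$. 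Comparing with Fourier inversion, $f(m/N)=(2\pi)^{-d/2}\int_{Q_{\pi N}(0)}\hat f(\xi)e^{i\,m\cdot\xi/N}\,\mathrm d\xi$, so $c_n=(2\pi)^{-d/2}N^{-d}f(-n/N)$; substituting this back and relabelling $n\mapsto -n$ turns the expansion into $\hat f=\sum_n f(n/N)\,\widehat{f_{N,n}}$ in $L^2(\mathbb R^d)$, which is the first identity after undoing $\mathcal F$, while Parseval's identity for the orthogonal system gives $\|\hat f\|_{L^2}^2=(2\pi N)^d\sum_n|c_n|^2=N^{-d}\sum_n|f(n/N)|^2$, which is the second.

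I do not expect a serious obstacle: the proof is essentially bookkeeping. The two points that need care are keeping the normalization constants in the two displays above mutually consistent under the chosen convention for $\mathcal F$ (the usual place where sampling-theorem arguments go wrong), and noting that $L^2(Q_{\pi N}(0))$-convergence of the Fourier series transfers to $L^2(\mathbb R^d)$-convergence of $\sum_n f(n/N)f_{N,n}$ — the latter being immediate since every $\widehat{f_{N,n}}$ is supported in $Q_{\pi N}(0)$ and $\mathcal F$ is unitary. A cleaner packaging would be to first establish in the appendix that $\{N^{d/2}f_{N,n}\}_{n\in\mathbb Z^d}$ is an orthonormal basis of $\mathscr P_N$ via the above Fourier-side computation, after which both identities are just the expansion of $f$ in this basis together with Parseval.
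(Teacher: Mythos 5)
Your proposal is correct and follows essentially the same route as the paper's proof: pass to the Fourier side, expand $\hat f$ in the orthogonal trigonometric system $\{e^{i n\cdot\xi/N}\}_{n\in\mathbb Z^d}$ on $Q_{\pi N}(0)$, identify the coefficients with sampled values of $f$ via Fourier inversion, and then read off both identities using Lemma~\ref{lem-orth}. The only minor additions you make — noting $\hat f\in L^1$ so the pointwise samples are well defined, and the suggested repackaging via an orthonormal basis of $\mathscr P_N$ — are harmless refinements, not a different argument.
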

\noindent (Theorem \ref{lem-shannon}  is almost the same as Theorem 6.6.9 in
\cite{G}. For the sake of the completeness of the paper, we will give its proof in
 Appendix.)
In this paper, we will  extend the first identity in (\ref{equ-shannon}) to
an asymptotic observability identity for
functions $u(T,\cdot)$, where $T>0$ and $u$ solves
 the heat  equation:
\begin{align}\label{equ-heat}
\partial_tu(t,x)=\triangle u(t,x), \quad (t,x)\in  \mathbb{R}^+\times \mathbb{R}^d; \quad \quad u(0,\cdot)\in L^2(\mathbb{R}^d).
\end{align}

Throughout this paper, we use $\|\cdot\|$ and $\langle \cdot, \cdot\rangle$ to denote the  norm and the inner product of $L^2(\mathbb{R}^d)$;
  we use   $Q_r(x)$ and
$B_r(x)$, with $x\in  \mathbb{R}^d$ and  $r>0$, to denote respectively
the closed cube in $\mathbb{R}^d$ (centered at $x$ and of side length $2r$) and the closed ball in $\mathbb{R}^d$ (centered at $x$ and of radius of $r$); we let $\mathbb{N}^+:=\{1,2,\dots\}$; we use $\hat f$ or $\mathcal{F}(f)$ to denote the Fourier transform of a function $f$; we use $\{e^{t\triangle}\}_{t\geq 0}$ to denote the semigroup on $L^2(\mathbb{R}^d)$,  generated by the operator $\triangle$ (with its domain $D(\triangle)=H^2(\mathbb{R}^d)$).

 The  main theorem of this paper is as follows:

\begin{theorem}\label{thm-decomp}
With the notations in  (\ref{equ-basis}) and (\ref{wang1}),
there is a positive constant $C=C(d)$, depending only on $d$, so that
any solution  $u$ to \eqref{equ-heat} has  the following properties:

 \noindent (i) Given $T>0$ and $N>0$, there is   $R(\cdot)\in L^2(\mathbb R^d)$,
  with
 \begin{eqnarray}\label{wang6}
 \|R(\cdot)\|\leq C\left(1+\left(TN^2\right)^{-\frac{d}{4}} \right)e^{-TN^2}\|u(0,\cdot)\|,
 \end{eqnarray}
       so that
 \begin{align}\label{equ-decomp-725}
 u(T,\cdot) = \sum_{n\in \mathbb{Z}^d} u\big(T,{n}/{N}\big)f_{N,n}(\cdot)+R(\cdot) \;\;\mbox{in}\;\; L^2(\mathbb R^d).
 \end{align}

\noindent  (ii) If $T>0$ and $N>0$, then
 \begin{eqnarray}\label{equ-decomp-coeff}
 \Big\| \Big\{u\big(T,{n}/{N}\big)\Big\}_{n\in \mathbb{Z}^d} \Big\|_{l^2(\mathbb{Z}^d)}\leq C \left( 1+\left(TN^2\right)^{\frac{d}{4}} \right)T^{-\frac{d}{4}}\|u (0,\cdot)\|.
 \end{eqnarray}
   \vskip 5pt
 \noindent (iii) If $T>0$ and $N>0$, then
\begin{align}\label{wang9}
u(T,\cdot) = \sum_{n\in \mathbb{Z}^d} u\big(T, {n}/{N}\big)f_{N,n}(\cdot) \;\;\mbox{in}\;\;L^2(\mathbb{R}^d)\Longleftrightarrow u(0,\cdot)\in \mathscr{P}_N.
\end{align}

\noindent(iv) Given $T>0$, $N>0$,  $\varepsilon\in (0,1)$ and $\{\lambda_n\}_{n\in\mathbb Z^d}\subset \mathbb R^d$, with
$\sup_{n\in \mathbb{Z}^d}|\lambda_n-{n}/{N}|\leq {\varepsilon}/{N}$,
   there is $\widetilde{R}\in L^2(\mathbb R^d)$, with
\begin{eqnarray}\label{wang11}
\big\|\widetilde{R}(\cdot)\big\|\leq C\left(\varepsilon + \left(1+\left(TN^2\right)^{-\frac{1}{2}}\right)
\left(TN^2\right)^{-\frac{d}{4}}e^{-TN^2}\right)\|u(0,\cdot)\|,
\end{eqnarray}
 so that
\begin{eqnarray}\label{wang10}
u(T,\cdot) = \sum_{n\in \mathbb{Z}^d} u(T,\lambda_n)f_{N,n}(\cdot) + \widetilde{R}(\cdot)
\;\;\mbox{in}\;\;L^2(\mathbb{R}^d).
\end{eqnarray}
\end{theorem}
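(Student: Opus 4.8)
\medskip
\noindent\textbf{Proof strategy.} The plan is to split $u(T,\cdot)$ into a low-frequency, band-limited part, to which Theorem~\ref{lem-shannon} applies verbatim, and a high-frequency part which the heat semigroup renders exponentially small in $TN^2$. Let $P_N$ be the orthogonal projection of $L^2(\mathbb R^d)$ onto $\mathscr P_N$, i.e.\ the Fourier multiplier with symbol $\mathbf 1_{Q_{\pi N}(0)}$; since $\{e^{t\triangle}\}_{t\ge0}$ is also a family of Fourier multipliers, $P_N$ and $e^{T\triangle}$ commute, so, with $u(T,\cdot)=e^{T\triangle}u(0,\cdot)$, we may put $v:=P_Nu(T,\cdot)=e^{T\triangle}\big(P_Nu(0,\cdot)\big)\in\mathscr P_N$ and $w:=u(T,\cdot)-v=(I-P_N)u(T,\cdot)$, so that $\widehat w(\xi)=e^{-T|\xi|^2}\widehat{u(0,\cdot)}(\xi)\,\mathbf 1_{Q_{\pi N}(0)^{c}}(\xi)$ is supported off the cube (note $\widehat{u(T,\cdot)}=e^{-T|\xi|^2}\widehat{u(0,\cdot)}\in L^1\cap L^2$, so $u(T,\cdot)$ is continuous and pointwise values are meaningful). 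The recurring device will be the periodization identity $\sum_{n\in\mathbb Z^d}|\phi(n/N)|^2=N^d\int_{Q_{\pi N}(0)}\big|\sum_{k\in\mathbb Z^d}\widehat\phi(\xi+2\pi Nk)\big|^2\,\mathrm d\xi$ (Parseval on the torus of side $2\pi N$), valid for the $\phi$ occurring here by the Gaussian decay of $\widehat\phi$; together with the orthogonality $\|f_{N,n}\|^2=N^{-d}$ and the reconstruction formula of Lemma~\ref{lem-orth} and Theorem~\ref{lem-shannon}, it lets one pass freely among point samples, $l^2$-sequences and $L^2$-norms.

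For (ii), I would apply the periodization identity to $\phi=u(T,\cdot)$: a Cauchy-Schwarz in the index $k$, the elementary bound $|\xi+2\pi Nk|^2\ge\pi^2N^2|k|^2$ valid on $Q_{\pi N}(0)$, the pointwise inequality $e^{-T|\xi|^2}\le1$, and $\sum_{k\in\mathbb Z^d}e^{-\pi^2TN^2|k|^2}\le C(1+(TN^2)^{-d/2})$ give $\sum_n|u(T,n/N)|^2\le CN^d\big(1+(TN^2)^{-d/2}\big)\|u(0,\cdot)\|^2$, which rearranges into \eqref{equ-decomp-coeff}; in particular the series in \eqref{equ-decomp-725}, \eqref{wang9} and \eqref{wang10} converge in $L^2$. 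For (i), Theorem~\ref{lem-shannon} gives $v=\sum_n v(n/N)f_{N,n}$, so, since $u(T,n/N)=v(n/N)+w(n/N)$, the remainder in \eqref{equ-decomp-725} is forced to equal $R=w-\sum_n w(n/N)f_{N,n}$; here $\|w\|\le e^{-TN^2}\|u(0,\cdot)\|$ by Plancherel and $|\xi|^2\ge\pi^2N^2>N^2$ on $Q_{\pi N}(0)^{c}$, while $\big\|\sum_n w(n/N)f_{N,n}\big\|^2=N^{-d}\sum_n|w(n/N)|^2$ is again controlled by periodization — now $\widehat w$ vanishes on $Q_{\pi N}(0)$, so only the aliased terms with $k\ne0$ survive, each carrying a factor $e^{-T|\xi+2\pi Nk|^2}\le e^{-\pi^2TN^2}$ — and this yields \eqref{wang6}. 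As for (iii): if $u(0,\cdot)\in\mathscr P_N$ then $\widehat{u(T,\cdot)}=e^{-T|\xi|^2}\widehat{u(0,\cdot)}$ is supported in $Q_{\pi N}(0)$, so $u(T,\cdot)\in\mathscr P_N$ and Theorem~\ref{lem-shannon} gives \eqref{wang9}; conversely, if \eqref{wang9} holds then, by (ii), its right-hand side is an $l^2$-convergent combination of the $f_{N,n}\in\mathscr P_N$, hence $u(T,\cdot)\in\mathscr P_N$, and, $e^{-T|\xi|^2}$ being everywhere positive, $\widehat{u(0,\cdot)}$ vanishes a.e.\ off $Q_{\pi N}(0)$, i.e.\ $u(0,\cdot)\in\mathscr P_N$.

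For (iv), I would write $\widetilde R=\sum_n[v(n/N)-v(\lambda_n)]f_{N,n}+\big(w-\sum_n w(\lambda_n)f_{N,n}\big)$. For the parenthesized high-frequency term, set $\psi:=(I-P_N)e^{(T/2)\triangle}u(0,\cdot)$, so that $w=e^{(T/2)\triangle}\psi$ and $\|\psi\|\le e^{-\pi^2TN^2/2}\|u(0,\cdot)\|$; since $|\lambda_n-n/N|\le\varepsilon/N<1/N$, one has $|w(\lambda_n)|\le\sup_{y\in B(n/N,1/N)}|e^{(T/2)\triangle}\psi(y)|$, and the pointwise bound $|e^{(T/2)\triangle}\psi(y)|^2\le\big(K_{T/4}*|e^{(T/4)\triangle}\psi|^2\big)(y)$ (Cauchy-Schwarz, $K_t$ the heat kernel), together with $\|e^{(T/4)\triangle}\psi\|\le\|\psi\|$ and the theta-type estimate $\sup_z\sum_n\sup_{y\in B(n/N,1/N)}K_{T/4}(y-z)\le C(N^d+T^{-d/2})$, gives $\sum_n|w(\lambda_n)|^2\le C(N^d+T^{-d/2})e^{-\pi^2TN^2}\|u(0,\cdot)\|^2$; hence $\|w\|+\big\|\sum_n w(\lambda_n)f_{N,n}\big\|\le C\big(1+(TN^2)^{-d/4}\big)e^{-\pi^2TN^2/2}\|u(0,\cdot)\|$, which is dominated by the second summand in \eqref{wang11}. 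For the first term, $\big\|\sum_n[v(n/N)-v(\lambda_n)]f_{N,n}\big\|=N^{-d/2}\big\|\{v(\lambda_n)-v(n/N)\}_n\big\|_{l^2(\mathbb Z^d)}$; the mean value theorem gives $|v(\lambda_n)-v(n/N)|^2\le(\varepsilon/N)^2\sup_{B(n/N,\varepsilon/N)}|\nabla v|^2$, and, each $\partial_{x_j}v$ belonging to $\mathscr P_N$, a Plancherel-P\'olya inequality for $\mathscr P_N$ yields $\sum_n\sup_{B(n/N,\varepsilon/N)}|\nabla v|^2\le CN^d\|\nabla v\|^2$, while the Bernstein bound $\|\nabla v\|\le\pi\sqrt d\,N\|v\|$ (valid because $\operatorname{supp}\widehat v\subset Q_{\pi N}(0)$) makes this term $\le C\varepsilon\|v\|\le C\varepsilon\|u(0,\cdot)\|$; the two estimates together give \eqref{wang11}.

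The step I expect to require the most care is the Plancherel-P\'olya (Marcinkiewicz-Zygmund) inequality $\sum_n\sup_{B(n/N,\varepsilon/N)}|h|^2\le C_dN^d\|h\|^2$ for $h\in\mathscr P_N$ used in (iv). The naive route — write $h=\sum_m h(m/N)f_{N,m}$, bound $|h(y)|\le\sum_m|h(m/N)|\,\sup_{B(n/N,\varepsilon/N)}|f_{N,m}|$ and apply Young's inequality — fails, since $\sup_{B(n/N,\varepsilon/N)}|f_{N,m}|\lesssim\prod_{j}(1+|n_j-m_j|)^{-1}$ is not summable in $m$ (the $\operatorname{sinc}$ factors decaying only like $1/|t|$); one must instead invoke a local Nikolskii-type estimate $\sup_Q|h|^2\le C_dN^d\int_{\widetilde Q}|h|^2$ for functions with Fourier transform supported in a cube (equivalently, the classical Plancherel-P\'olya theorem for entire functions of exponential type), summed over a bounded overlap of dilated cubes. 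Granting that, carefully tracking the constants through the periodization and the elementary theta-function bounds $\sum_k e^{-aTN^2|k|^2}\le C(1+(TN^2)^{-d/2})$ produces the precise forms \eqref{wang6}, \eqref{equ-decomp-coeff} and \eqref{wang11}.
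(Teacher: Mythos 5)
Your proof is correct, but it travels a genuinely different road than the paper's. The paper first proves a general asymptotic sampling identity for $H^s(\mathbb R^d)$ with $s>d/2$ (Theorem~\ref{thm-f-d}), whose engine is Proposition~\ref{lem-smooth-l2-r}: an $l^2$-bound for local sup-norms $\|f\|_{C(Q_r(rn))}$ in terms of $\|f\|_{H^s}$, proved by cutoff functions, Lemma~\ref{lem-comm}, and the Sobolev embedding $H^s\hookrightarrow C$. Part (i) of Theorem~\ref{thm-decomp} is then obtained by rescaling and specializing $s=d$; part (ii) comes from Lemma~\ref{lem-lpq}, built on the same machinery; part (iv) rests on Young's perturbation lemma for band-limited functions (Lemma~\ref{lem-conti}), its corollary for $\mathscr P_N$, and a gradient estimate (Proposition~\ref{cor-purb-2}). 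You replace this apparatus with a single classical device: the $(2\pi N)$-periodization/aliasing identity for point samples, combined with the Gaussian decay of $e^{-T|\xi|^2}$ and the theta-function bound $\sum_k e^{-a|k|^2}\le C_d(1+a^{-d/2})$. This gives (i) and (ii) almost immediately, with no Bessel potentials, no $H^s$-embedding, and no auxiliary Theorem~\ref{thm-f-d}; for (iii) the two proofs agree, and for (iv) you substitute a mean-value-plus-Plancherel--P\'olya argument for Young's Lemma and a heat-kernel convolution bound for Lemma~\ref{lem-lpq}. Your high-frequency bound in (iv), $C(1+(TN^2)^{-d/4})e^{-\pi^2 TN^2/2}\|u_0\|$, is dominated by the second summand in \eqref{wang11} because of the slack $\pi^2/2>1$, so the claimed form does come out — but this is worth saying explicitly, since on its face $(1+a^{-d/4})e^{-a}$ is \emph{not} $\lesssim(1+a^{-1/2})a^{-d/4}e^{-a}$; the exponential gain from $\pi^2>2$ is essential. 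The principal trade-off between the two routes: the paper's path yields the strictly more general $H^s$ identity (their Theorem~\ref{thm-f-d}), whereas yours is shorter, more self-contained, and exploits the special Gaussian structure of the heat flow directly. The one step you correctly flag as needing a genuine external input is the $d$-dimensional Plancherel--P\'olya (Marcinkiewicz--Zygmund) inequality $\sum_n\sup_{B(n/N,\varepsilon/N)}|h|^2\le C_dN^d\|h\|^2$ for $h\in\mathscr P_N$; this is standard (local Nikolskii plus bounded overlap, or the entire-function-of-exponential-type version), and is in fact a special case of the paper's Proposition~\ref{lem-smooth-l2-r}, so the two approaches are not entirely disjoint at that point.
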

We now give several notes on Theorem \ref{thm-decomp}.
\begin{itemize}
  \item[(a1)] The most important result in this theorem is (\ref{equ-decomp-725}),
  which we call  an asymptotic observability identity for the heat equation \eqref{equ-heat}. Two motivations  to derive (\ref{equ-decomp-725}) are given in order.
 First, it  corresponds to a kind of feedback null approximate controllability, with a cost, for some impulsively controlled heat equations
(see Theorem \ref{theorem4.6}). Second,
it
is  a natural extension of the first identity in (\ref{equ-shannon}).
The later can be explained as follows:
 The first identity in (\ref{equ-shannon}) says that a function $f\in\mathscr{P}_N$ can be completely determined by its values at lattice points $\{\frac{n}{N}\}_{n\in \mathbb{Z}^d}$.
The asymptotic observability identity (\ref{equ-decomp-725}) says that  by observing a solution $u$ of \eqref{equ-heat}
over countably many lattice points $\{\frac{n}{N}\}_{n\in \mathbb{Z}^d}$ in $\mathbb{R}^d$ and at time $T$,
one can approximately recover $u(T,\cdot)$, with the error $R$.

\item [(a2)]
The error $R$ in  \eqref{equ-decomp-725} depends on $T$, $N$ and $u$.
It is a small term when  $N$ is larger. This can be understood in the following way:
  Let $T>0$ and $N>0$. We
   define  three  operators  on $L^2(\mathbb{R}^d)$ via
 $$
 \mathcal{W}_N(u_0):=e^{T\triangle}u_0; \;\; \mathcal{M}_N(u_0):=\sum_{n\in \mathbb{Z}^d}(e^{T\triangle}u_0)(n/N)f_{N,n};\;\; \mathcal{R}_N(u_0):=R_{u_0}\;\;
\mbox{for each}\;\; u_0\in L^2(\mathbb{R}^d),
 $$
  where $R_{u_0}$ is the  error $R$ in \eqref{equ-decomp-725} corresponding to
  the solution of \eqref{equ-heat} with $u(0,\cdot)=u_0(\cdot)$.
  They are clearly linear and bounded operators. We treat $\mathcal{M}_N(u_0)$  as the main part of $\mathcal{W}_N(u_0)$ and $\mathcal{R}_N(u_0)$ as the residual part of $\mathcal{W}_N(u_0)$. Now,
        \eqref{equ-decomp-725} can be
  rewritten as:
   \begin{eqnarray*}\label{wang9111.12}
  \mathcal{W}_N(u_0)=\mathcal{M}_N(u_0)+\mathcal{R}_N(u_0)\;\;\mbox{for each}\;\;
  u_0\in L^2(\mathbb{R}^d).
  \end{eqnarray*}
   According to Proposition 2 in \cite[p. 28]{Stein},
  \begin{eqnarray*}\label{wang9111.13}
  \|\mathcal{W}_N\|_{\mathcal{L}(L^2(\mathbb{R}^d))}=1\;\;\mbox{for all}\;\;N>0.
  \end{eqnarray*}
   When $N$ satisfies that
  \begin{eqnarray}\label{wang9111.11}
 C(d)\left(1+\left(TN^2\right)^{-\frac{d}{4}} \right)e^{-TN^2}\leq \frac{1}{10},
  \end{eqnarray}
 (Here, $C(d)$ is  given in (i) of Theorem \ref{thm-decomp}.)
     we have that
  \begin{eqnarray*}\label{wang9111.14}
  \|\mathcal{R}_N\|_{\mathcal{L}(L^2(\mathbb{R}^d))}\leq \frac{1}{10}.
  \end{eqnarray*}
  From these, we see that if $N$ satisfies \eqref{wang9111.11}, then
  $\|\mathcal{R}_N\|_{\mathcal{L}(L^2(\mathbb{R}^d))}$
  is small, compared with $\|\mathcal{W}_N\|_{\mathcal{L}(L^2(\mathbb{R}^d))}$.

\item [(a3)] The asymptotic observability identity (\ref{equ-decomp-725}) is not true  when the sum on the right hand side of (\ref{equ-decomp-725})
is taken over finitely many lattice  points (see Proposition \ref{prop-cout-ex}).
This motivates us to derive a weak asymptotic observability identity (see
Theorem \ref{thm-asym-ob-balls}) which says
what follows: {\it If it is  known in advance that the initial datum of a solution
$u$ to \eqref{equ-heat} has some kind of decay at infinity, then by observing this solution over finitely many lattice points $\{n/N\}_{n\in \mathbb{Z}^d}\subset B_r(0)$ and at time $T$, one can approximately recover $u(T,\cdot)$, with an error
$R$. Moreover, $R$ tends to zero when $N, r\rightarrow\infty$.}
This weak asymptotic observability identity gives a weak
feedback null approximate controllability, with a cost, for some impulsively controlled heat equations (see Theorem \ref{theorem4.7-added}).

\item [(a4)]
 More general, we build up  a similar
  asymptotic  identity to (\ref{equ-decomp-725})
for  functions in the space $H^s(\mathbb{R}^d)$ with $s>d/2$ (see Theorem \ref{thm-f-d}). Moreover,  we explained that
  $s=d/2$ is critical (see Remark \ref{remark3.2wang}).
  Notice that the Fourier transform of a function in this space decays more slowly than the Fourier transform of  such a function $u(T,\cdot)$ that $u$ solves \eqref{equ-heat} and $T>0$, in general.
   So the asymptotic observable identity \eqref{equ-decomp-725} is a special case of Theorem \ref{thm-f-d}. This will be seen from the proof of Theorem \ref{thm-decomp}.

\item[(a5)]
  The conclusion (ii) gives an estimate
for the sampling values   $\{u(T,\frac{n}{N})\}_{n\in \mathbb{Z}^d}$.
  The conclusion (iii) says that the error term $R(\cdot)$ in (\ref{equ-decomp-725})
cannot be dropped in general.
 The conclusion (iv)  shows some robustness of (\ref{equ-decomp-725}) with respect to the sampling lattice points $\{\frac{n}{N}\}_{n\in \mathbb{Z}^d}$.

\end{itemize}



The rest of this paper is organized as follows: Section 2 proves Theorem \ref{thm-decomp}. Section 3 gives some further studies on the asymptotic observability identity.
Section 4
 presents some applications of the main results
 to some controllability.

\section{Proof of main results }

In the rest of the paper, we use $C(\cdots)$ to denote a  positive constant which depends only on what are enclosed in the brackets and varies in different contexts.
The aim of this section is to show Theorem \ref{thm-decomp}.


\subsection{Some properties on functions in
  Bessel potential spaces}

Consider
 the Bessel potential space
$H^s(\mathbb{R}^d)$ (with $s\in \mathbb{R}$)  equipped with  the norm:
\begin{eqnarray}\label{Wgehngsheng21}
\|f\|_{H^s(\mathbb{R}^d)}:=
\left(\int_{\mathbb{R}^d}\big(1+|\xi|^2\big)^{s}|\hat{f}(\xi)|^2\,\mathrm d\xi\right)^{1/2},\;\; f\in H^s(\mathbb{R}^d).
\end{eqnarray}
For each $s\in \mathbb{R}$, we define a  linear bounded operator
$\langle D\rangle^s: H^s(\mathbb{R}^d)\rightarrow L^2(\mathbb{R}^d)$
in the following manner: Given $f\in H^s(\mathbb{R}^d)$,  let
\begin{eqnarray}\label{wangyuanyuan22}
\widehat{\langle D\rangle^s f}(\xi) := \big(1+|\xi|^2\big)^{\frac{s}{2}}\hat{f}(\xi),~\xi\in \mathbb R^d.
\end{eqnarray}
One can easily see  that
\begin{eqnarray}\label{wangyuanyuan23}
\langle D\rangle^{2m} = (1-\triangle)^m\;\;\mbox{when}\;\; m\in \mathbb{N}^+.
\end{eqnarray}
The main purpose of this subsection is to present the next Theorem \ref{thm-f-d}
 which plays
an important role in the proof of the conclusion (i) in Theorem \ref{thm-decomp}.
\begin{theorem}\label{thm-f-d}
Given  $s>\frac{d}{2}$, there is  $C=C(s,d)$ so that
 any $f\in H^s(\mathbb{R}^d)$ has the property:
 For each $N>0$, there is $R\in L^2(\mathbb R^d)$, with
\begin{eqnarray}\label{wang962.26}
 \|R\|\leq C \left( 1+N^{-\frac{d}{2}} \right)
 \left( \int_{  \xi\in Q_{\pi N}^c  } (1+|\xi|^2)^s |\hat f(\xi)|^2 d\xi \right)^{\frac{1}{2}},
\end{eqnarray}
 so that
 \begin{align}\label{equ-decomp}
f = \sum_{n\in \mathbb{Z}^d} f\left({n}/{N}\right) f_{N,n} + R
\;\;\mbox{in}\;\;
L^2(\mathbb R^d).
\end{align}
\end{theorem}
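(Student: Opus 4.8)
Here is how I would prove Theorem~\ref{thm-f-d}.

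The plan is to split $f$ into its low- and high-frequency parts with respect to the cube $Q_{\pi N}(0)$, handle the low part by the exact Shannon--Whittaker identity (Theorem~\ref{lem-shannon}), and show that the high part accounts precisely for the error $R$. Let $P_N$ denote the Fourier multiplier with symbol $\mathbf{1}_{Q_{\pi N}(0)}$, so that $P_Nf\in\mathscr P_N$ and $f-P_Nf\in H^s(\mathbb R^d)$ with $\widehat{(f-P_Nf)}=\hat f\,\mathbf{1}_{Q_{\pi N}^c}$. Since $s>d/2$, the Sobolev embedding $H^s(\mathbb R^d)\hookrightarrow C_0(\mathbb R^d)$ gives meaning to all the pointwise values occurring below, and $f(n/N)=(P_Nf)(n/N)+(f-P_Nf)(n/N)$. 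Applying Theorem~\ref{lem-shannon} to $P_Nf$ and rearranging, I would set
\begin{align*}
R:=(f-P_Nf)-\sum_{n\in\mathbb Z^d}(f-P_Nf)(n/N)\,f_{N,n},
\end{align*}
so that, once the two series $\sum_n f(n/N)f_{N,n}$ and $\sum_n(f-P_Nf)(n/N)f_{N,n}$ are shown to converge in $L^2(\mathbb R^d)$, identity~\eqref{equ-decomp} holds by construction. By the orthogonality $\langle f_{N,n},f_{N,m}\rangle=N^{-d}\delta_{nm}$ of Lemma~\ref{lem-orth}, this convergence is equivalent to the membership of the coefficient sequences in $\ell^2(\mathbb Z^d)$.

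The analytic core is a sampling inequality: for every $g\in H^s(\mathbb R^d)$ with $s>d/2$ and every $N>0$,
\begin{align*}
N^{-d}\sum_{n\in\mathbb Z^d}|g(n/N)|^2\le C(s,d)\,(1+N^{-d})\,\|g\|_{H^s(\mathbb R^d)}^2 .
\end{align*}
To obtain it I would periodize: writing $g(n/N)=(2\pi)^{-d/2}N^d\int_{[0,2\pi]^d}e^{in\cdot\zeta}\Psi(\zeta)\,\mathrm d\zeta$ with $\Psi(\zeta):=\sum_{k\in\mathbb Z^d}\hat g(N\zeta+2\pi Nk)$ (the interchange being legitimate because $\hat g\in L^1(\mathbb R^d)$, again by $s>d/2$), one recognizes $\{g(n/N)\}_n$ as, up to a constant, the Fourier coefficients of $\Psi$ on the torus, whence Parseval gives $\sum_n|g(n/N)|^2=N^{2d}\int_{[0,2\pi]^d}|\Psi|^2$. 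A Cauchy--Schwarz in the index $k$ against the weight $(1+|\cdot|^2)^s$ bounds $|\Psi(\zeta)|^2$ by $A_N\sum_k(1+|N\zeta+2\pi Nk|^2)^s|\hat g(N\zeta+2\pi Nk)|^2$, where $A_N:=\sup_{\zeta}\sum_k(1+|N\zeta+2\pi Nk|^2)^{-s}$; integrating over $[0,2\pi]^d$ and unfolding the tiling $\bigcup_kN([0,2\pi]^d+2\pi k)=\mathbb R^d$ turns the right-hand side into $A_NN^{-d}\|g\|_{H^s}^2$, so $\sum_n|g(n/N)|^2\le N^dA_N\|g\|_{H^s}^2$. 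Finally, comparing the lattice sum defining $A_N$ (a lattice of covolume $(2\pi N)^d$) with $\int_{\mathbb R^d}(1+|\xi|^2)^{-s}\,\mathrm d\xi<\infty$ yields $A_N\le C(s,d)(1+N^{-d})$; the finiteness of this integral is precisely where $s>d/2$ enters, which is why $s=d/2$ is critical (cf.\ Remark~\ref{remark3.2wang}).

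With the sampling inequality available, the rest is bookkeeping. Applying it to $g=f-P_Nf$ gives $N^{-d/2}\big(\sum_n|(f-P_Nf)(n/N)|^2\big)^{1/2}\le C(s,d)(1+N^{-d/2})\|f-P_Nf\|_{H^s}$, and $\|f-P_Nf\|_{H^s}^2=\int_{Q_{\pi N}^c}(1+|\xi|^2)^s|\hat f(\xi)|^2\,\mathrm d\xi$; the same quantity also dominates $\|f-P_Nf\|=\big(\int_{Q_{\pi N}^c}|\hat f|^2\big)^{1/2}$, since $(1+|\xi|^2)^s\ge1$. Because $\big\|\sum_n(f-P_Nf)(n/N)f_{N,n}\big\|=N^{-d/2}\big(\sum_n|(f-P_Nf)(n/N)|^2\big)^{1/2}$ by Lemma~\ref{lem-orth}, the triangle inequality
\begin{align*}
\|R\|\le\|f-P_Nf\|+\Big\|\sum_{n\in\mathbb Z^d}(f-P_Nf)(n/N)f_{N,n}\Big\|
\end{align*}
delivers exactly~\eqref{wang962.26}. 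The convergence of $\sum_nf(n/N)f_{N,n}$ needed above also follows, since $\{(P_Nf)(n/N)\}_n\in\ell^2(\mathbb Z^d)$ by the second identity of Theorem~\ref{lem-shannon} and $\{(f-P_Nf)(n/N)\}_n\in\ell^2(\mathbb Z^d)$ by the sampling inequality. The main obstacle I anticipate is establishing the uniform bound $A_N\le C(s,d)(1+N^{-d})$ for all $N>0$ simultaneously (small $N$ producing the $N^{-d}$ term, large $N$ the $O(1)$ term); everything else is the routine periodization computation together with the already-available Shannon--Whittaker identity.
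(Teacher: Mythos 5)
Your proof is correct, and it shares the paper's overall skeleton: split $f$ into low- and high-frequency parts relative to $Q_{\pi N}(0)$, apply Theorem~\ref{lem-shannon} to the low part, define $R$ as the high part minus its sampled sum, and close with the triangle inequality together with the orthogonality of $\{N^{d/2}f_{N,n}\}_{n}$ from Lemma~\ref{lem-orth}. Where you genuinely diverge is in the key sampling lemma. The paper proves the stronger local estimate of Proposition~\ref{lem-smooth-l2-r}, controlling $\big\|\{\|g\|_{C(Q_{1/N}(n/N))}\}_{n}\big\|_{l^2}$ by means of a smooth partition of unity, the commutator Lemma~\ref{lem-comm}, and a scaling step; you instead prove precisely the pointwise sampling inequality you need by periodization: after scaling, $\{g(n/N)\}_n$ are (up to a factor $N^d$) Fourier coefficients of $\Psi(\zeta)=\sum_{k}\hat g(N\zeta+2\pi Nk)$ on the torus, Parseval gives $\sum_n|g(n/N)|^2=N^{2d}\int_{[0,2\pi]^d}|\Psi|^2$, Cauchy--Schwarz against the weight $(1+|\cdot|^2)^s$ then unfolds along the tiling to the bound $N^dA_N\|g\|_{H^s}^2$ with $A_N:=\sup_\zeta\sum_k(1+|N\zeta+2\pi Nk|^2)^{-s}$, and comparing this lattice sum with $\int_{\mathbb R^d}(1+|\xi|^2)^{-s}\,\mathrm d\xi$ over cells of covolume $(2\pi N)^d$ (for $N\le1$), together with the direct bound $\sum_{k\neq0}(1+\pi^2|k|^2)^{-s}<\infty$ (for $N\ge1$), yields $A_N\le C(s,d)(1+N^{-d})$, hence the $(1+N^{-d/2})$ factor. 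This is the classical Plancherel--P\'olya route; it is more elementary, needs no cutoff machinery, and makes the criticality of $s=d/2$ maximally transparent since everything hinges on the convergence of $\int_{\mathbb R^d}(1+|\xi|^2)^{-s}\,\mathrm d\xi$. What the paper's heavier argument buys is a local $C(Q_r)$-bound rather than merely a pointwise one; that stronger form is reused (with derivatives) in Lemma~\ref{lem-lpq} and in the proofs of parts (ii) and (iv) of Theorem~\ref{thm-decomp}, whereas your lemma, as stated, proves exactly Theorem~\ref{thm-f-d} and no more.
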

\begin{remark}\label{remark3.2wang}
(i) If $u$ solves Equation (\ref{equ-heat}) and $T>0$, then for any $s\in \mathbb{R}$,
the function $u(T,\cdot)$ belongs to $H^s(\mathbb{R}^d)$.
So the asymptotic observable identity \eqref{equ-decomp-725} is a special case of the
asymptotic identity \eqref{equ-decomp}. Indeed,
as what we will see in the proof of Theorem \ref{thm-decomp}, the   conclusion (i) in Theorem \ref{thm-decomp} is a direct consequence of Theorem \ref{thm-f-d}.

(ii) From  \eqref{wang962.26}, we see that when
$f\in \mathscr{P}_{N}$, $R$ in \eqref{equ-decomp} disappears, consequently,
\eqref{equ-decomp} is exactly the same as
the first identity in \eqref{equ-shannon}.

 (iii) Theorem \ref{thm-f-d} does not hold for $f\in H^s(\mathbb{R}^d)$ with $s\leq \frac{d}{2}$ in general. The reason is as follows: When $s\leq \frac{d}{2}$, a function in $H^{s}(\mathbb{R}^d)$ may not belong to $C(\mathbb{R}^d)$ and
     may have singularity   in the lattice points $\{\frac{n}{N}\}_{n\in \mathbb{Z}^d}$.
     Such singularity make the coefficients in \eqref{equ-decomp} meaningless. From this point of view,  $s=d/2$ is critical.
 \end{remark}

The next Lemma \ref{lem-comm} serves for the proof of Proposition \ref{lem-smooth-l2-r} which plays a big role in the proof of
the conclusion (i) in Theorem \ref{thm-f-d}.

\begin{lemma}\label{lem-comm}
Let $s>0$  and $\varphi\in C_0^\infty(\mathbb R^d)$.
 Then
\begin{eqnarray}\label{wangyuanyuan25}
\|\varphi f\|_{H^s(\mathbb R^d)}^2
\leq 4^s \left(\|\big((1-\triangle)^{[s]+1}
\varphi\big) f\|^2 +\|\varphi\cdot \langle D \rangle^s f\|^2
\right)\;\;\mbox{for all}\;\;f\in H^s(\mathbb{R}^d),
\end{eqnarray}
where $[s]$ denotes the integer part of $s$, and $\langle D\rangle^s$ is given by  (\ref{wangyuanyuan22}).
\end{lemma}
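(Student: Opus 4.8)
The plan is to estimate the $H^s$ norm of the product $\varphi f$ by exploiting the subadditivity of the weight $(1+|\xi|^2)^{s/2}$ under convolution of Fourier transforms. Since $\widehat{\varphi f} = (2\pi)^{-d/2}\,\hat\varphi * \hat f$, the quantity $\|\varphi f\|_{H^s}$ is controlled by $\big\|\,(1+|\cdot|^2)^{s/2}\,(\hat\varphi * \hat f)\,\big\|_{L^2}$. The elementary inequality $(1+|\xi|^2)^{1/2} \le (1+|\xi-\eta|^2)^{1/2} + (1+|\eta|^2)^{1/2}$ (triangle inequality after noting $(1+|\xi|^2)^{1/2}\le 1+|\xi|$) combined with $(a+b)^s\le 2^s(a^s+b^s)$ gives the pointwise ``Leibniz'' bound
\begin{eqnarray*}
(1+|\xi|^2)^{s/2}\,|\hat\varphi * \hat f|(\xi)
\le 2^s\!\left( \big(|\hat\varphi|\,\langle\cdot\rangle^s\big) * |\hat f| \;+\; |\hat\varphi| * \big(\langle\cdot\rangle^s|\hat f|\big)\right)\!(\xi),
\end{eqnarray*}
where $\langle\xi\rangle := (1+|\xi|^2)^{1/2}$. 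Taking $L^2$ norms, applying Young's convolution inequality $\|g*h\|_{L^2}\le \|g\|_{L^1}\|h\|_{L^2}$ to each term, and then using the further split $(x+y)^2\le 2(x^2+y^2)$, one arrives at
\begin{eqnarray*}
\|\varphi f\|_{H^s}^2 \le 2^{2s+1}\left( \big\|\langle\cdot\rangle^s\hat\varphi\big\|_{L^1}^2 \,\|f\|^2 \;+\; \big\|\hat\varphi\big\|_{L^1}^2\,\big\|\langle D\rangle^s f\big\|^2\right).
\end{eqnarray*}

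Next I would identify the two $L^1$ Fourier-side norms with differential-operator expressions on the $\varphi$ side, in order to match the exact right-hand side of \eqref{wangyuanyuan25}. For the term $\|\hat\varphi\|_{L^1}$: since $\varphi\in C_0^\infty$, we have $\langle\xi\rangle^{d+1}\hat\varphi(\xi) \in L^\infty$ up to a constant times $\|(1-\triangle)^{[s]+1}\varphi\|$-type quantities, but more cleanly one writes $\|\hat\varphi\|_{L^1} \le \|\langle\cdot\rangle^{-(d+1)}\|_{L^2}\,\|\langle\cdot\rangle^{d+1}\hat\varphi\|_{L^2} = c_d\,\|(1-\triangle)^{(d+1)/2}\varphi\|$ by Plancherel. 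Replacing $d+1$ by $2([s]+1)$ (which is $\ge d+1$ once $s>d/2$, say, or in any case dominates after absorbing lower-order terms into the constant $4^s$) gives $\|\hat\varphi\|_{L^1}\le C\,\|(1-\triangle)^{[s]+1}\varphi\|$. Similarly $\|\langle\cdot\rangle^s\hat\varphi\|_{L^1}\le \|\langle\cdot\rangle^{-(d+1)}\|_{L^2}\,\|\langle\cdot\rangle^{s+d+1}\hat\varphi\|_{L^2}$, and since $s+d+1\le 2([s]+1)+d$ we again bound this by $C\,\|(1-\triangle)^{[s]+1}\varphi\|$ after noting $(1-\triangle)^{[s]+1}$ already produces the top-order derivatives needed. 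The constant $4^s$ in the statement presumably comes from being slightly more careful: using $\langle\xi\rangle\le 2^{1/2}\max(1,|\xi|)$ and tracking $(2^s)^2=4^s$ through the single Leibniz split rather than the crude $2^{2s+1}$ above, with the remaining numerical factors and the $L^1$-estimate constants folded into the $\|(1-\triangle)^{[s]+1}\varphi\| f\|$ term.

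The main obstacle is bookkeeping rather than conceptual: one must verify that $(1-\triangle)^{[s]+1}\varphi$ — an operator of integer order $2([s]+1)$ — genuinely dominates the fractional-order quantities $\|\langle\cdot\rangle^s\hat\varphi\|_{L^1}$ and $\|\hat\varphi\|_{L^1}$ with a constant absorbable into $4^s$, and that one does not accidentally need $\langle D\rangle^s\varphi$ (a non-integer operator) on the $\varphi$ side. This works precisely because $2([s]+1) > s$ and $2([s]+1)\ge d+1$ (so the $L^2\to L^1$ passage via $\langle\cdot\rangle^{-(d+1)}\in L^2(\mathbb R^d)$ is available), letting every $\varphi$-dependent factor be replaced by the single integer-order quantity appearing in \eqref{wangyuanyuan25}. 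A secondary point is making sure the $\langle D\rangle^s f$ term appears with coefficient exactly controlled by $4^s\,\|\varphi\|_{L^\infty}$-type data absorbed into $4^s\|\hat\varphi\|_{L^1}^2$ — but since the statement allows the clean form $4^s(\cdots)$, with the $\varphi$-norms hidden inside the two summands, this is just a matter of arranging the constants so that $\|\hat\varphi\|_{L^1}\le \|(1-\triangle)^{[s]+1}\varphi\|\cdot(\text{const absorbed})$ and declaring the result.
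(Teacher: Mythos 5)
Your argument is correct up to the Young's‑inequality step, but that step destroys exactly the structure the lemma requires, and the second half of your proposal does not repair it. After applying Young's convolution inequality you arrive at
\[
\|\varphi f\|_{H^s}^2 \;\le\; 2^{2s+1}\Big(\,\|\langle\cdot\rangle^s\hat\varphi\|_{L^1}^2\,\|f\|^2 \;+\; \|\hat\varphi\|_{L^1}^2\,\|\langle D\rangle^s f\|^2\Big),
\]
whose right‑hand side is a \emph{product of a $\varphi$‑norm and an $f$‑norm}. The lemma's right‑hand side is a sum of $L^2$ norms of \emph{pointwise products}, $\|\big((1-\triangle)^{[s]+1}\varphi\big)f\|^2$ and $\|\varphi\,\langle D\rangle^s f\|^2$. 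These are genuinely different quantities, and the factored bound does not imply the local one: for instance, if $f$ is supported far from $\mathrm{supp}\,\varphi$, then $\big((1-\triangle)^{[s]+1}\varphi\big)f \equiv 0$ and $\|\varphi\,\langle D\rangle^s f\|$ can be made arbitrarily small (using the decay of $\langle D\rangle^s f$ away from $\mathrm{supp}\, f$), whereas $\|\hat\varphi\|_{L^1}\|\langle D\rangle^s f\|$ and $\|\langle\cdot\rangle^s\hat\varphi\|_{L^1}\|f\|$ stay bounded below by a fixed constant. So the inequality you would need, $\|\langle\cdot\rangle^s\hat\varphi\|_{L^1}\|f\|\le C\,\|\big((1-\triangle)^{[s]+1}\varphi\big)f\|$, is simply false. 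Your discussion of bounding $\|\hat\varphi\|_{L^1}$ and $\|\langle\cdot\rangle^s\hat\varphi\|_{L^1}$ by Sobolev norms of $\varphi$ addresses which $\varphi$‑norm appears, but not this structural mismatch: it treats the target $\|\big((1-\triangle)^{[s]+1}\varphi\big)f\|$ as though it were $\|(1-\triangle)^{[s]+1}\varphi\|\cdot\|f\|$, which it is not.

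The locality of the right‑hand side is precisely what makes the lemma useful downstream (it is applied to the translated bumps $\varphi_n$ in Proposition~\ref{lem-smooth-l2-r}, where the finite overlap of $\{\mathrm{supp}\,\varphi_n\}$ is summed via \eqref{equ-810-11}; a factored bound would give a divergent sum there). The paper's proof never invokes Young: it splits the weight pointwise inside the convolution integral and then recognizes each resulting inner integral as $(2\pi)^{d/2}$ times the Fourier transform of a pointwise product, namely $\widehat{\big((1-\triangle)^{[s]+1}\varphi\big)f}$ and $\widehat{\varphi\,\langle D\rangle^s f}$, so Plancherel gives the $L^2$ norms of the local products directly, as in \eqref{equ-810-4}. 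To prove the lemma you need to keep the convolutions intact through the Plancherel identification rather than decoupling them with Young.
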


\begin{proof}
Arbitrarily fix $s>0$, $\varphi\in C_0^\infty(\mathbb R^d)$ and $f\in H^s(\mathbb{R}^d)$.
It follows by (\ref{Wgehngsheng21}) that
\begin{eqnarray}\label{equ-810-1}
\|\varphi f\|^2_{H^s(\mathbb{R}^d)} &=& \int_{\mathbb{R}^d_\xi} \big(1+|\xi|^2\big)^{s} \big|\widehat{\varphi f}(\xi)\big|^2 \,\mathrm d\xi
\nonumber\\
&=&\int_{\mathbb{R}^d_\xi}\big(1+|\xi|^2\big)^s \Big| (2\pi)^{-\frac{d}{2}} \int_{\mathbb{R}^d_\eta} \widehat{\varphi}(\eta) \hat{f}(\xi-\eta)\,\mathrm d\eta \Big|^2 \,\mathrm d\xi.
\end{eqnarray}
Since
\begin{eqnarray*}
\big(1+|\xi|^2\big)^s &\leq& 2^{2s}\left(\big(1+|\eta|^2\big)^s +\big(1+|\xi-\eta|^2\big)^s\right)\nonumber\\
&\leq& 2^{2s}\big(\big(1+|\eta|^2\big)^{2([s]+1)} +\big(1+|\xi-\eta|^2\big)^s\big)
\;\;\mbox{for all}\;\;\xi,\eta\in \mathbb{R}^d,
\end{eqnarray*}
we see from \eqref{equ-810-1}  that
\begin{align}\label{equ-810-3}
\|\varphi f\|^2_{H^s(\mathbb{R}^d)}\leq 2^{2s}(I+II),
\end{align}
where
$$
I := \int_{\mathbb{R}^d_\xi} \Big|
(2\pi)^{-\frac{d}{2}} \int_{\mathbb{R}^d_\eta}  \big(1+|\eta|^2\big)^{[s]+1}
\widehat{\varphi}(\eta)\widehat{f}(\xi-\eta)\,\mathrm d\eta \Big|^2 \,\mathrm d\xi,
$$
$$
II := \int_{\mathbb{R}^d_\xi} \Big| (2\pi)^{-\frac{d}{2}} \int_{\mathbb{R}^d_\eta}
\widehat{\varphi}(\eta)\big(1+|\xi-\eta|^2\big)^{\frac{s}{2}}
\widehat{f}(\xi-\eta)\,\mathrm d\eta \Big|^2 \,\mathrm d\xi.
$$
By the definitions of $I$ and $II$, (\ref{wangyuanyuan22}) and (\ref{wangyuanyuan23}), after some direct computations,
we obtain that
\begin{align}\label{equ-810-4}
I= \|\big((1-\triangle)^{[s]+1}\varphi\big) f\|^2
 \;\;\mbox{and}\;\;
 II= \|\varphi \langle D\rangle^s f\|^2.
\end{align}
Now (\ref{wangyuanyuan25}) follows from  \eqref{equ-810-4} and \eqref{equ-810-3}
at once.
This ends the proof of Lemma \ref{lem-comm}.
\end{proof}

\begin{proposition}\label{lem-smooth-l2-r}
Given $s>\frac{d}{2}$, there is $C=C(s,d)$ so that when  $r>0$,
\begin{align}\label{equ-lpq-625-1}
\Big\|\left\{\| f\|_{C(Q_{r}(rn))}\right\}_{n\in \mathbb{Z}^d}\Big\|_{l^2(\mathbb{Z}^d)}\leq C\left(1+r^{- \frac{d}{2} }\right) \|f\|_{H^s(\mathbb{R}^d)}\;\;\mbox{for all}\;\;f \in H^s(\mathbb{R}^d).
\end{align}
\end{proposition}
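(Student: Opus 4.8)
The plan is to localise $f$ around each cube $Q_r(rn)$ with a smooth cutoff, estimate the local supremum by the $H^s$-norm of the localised function via the Sobolev embedding $H^s(\mathbb R^d)\hookrightarrow C(\mathbb R^d)\cap L^\infty(\mathbb R^d)$ (which holds since $s>d/2$, with a constant depending only on $s,d$), then bound those localised $H^s$-norms by Lemma \ref{lem-comm} and sum over $n\in\mathbb Z^d$, exploiting that the cutoffs have bounded overlap. The only real point to watch is the $r$-dependence of the constants: the ``cutoff at scale $r$'' argument already yields an $r$-independent bound for $r\ge1$, while for small $r$ one rescales to the unit scale, and this rescaling is precisely what produces the factor $r^{-d/2}$.

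First fix $\varphi\in C_0^\infty(\mathbb R^d)$ with $0\le\varphi\le1$, $\varphi\equiv1$ on $Q_1(0)$ and $\operatorname{supp}\varphi\subset Q_2(0)$, and for $n\in\mathbb Z^d$, $r>0$ put $\varphi_{r,n}(x):=\varphi(x/r-n)$. Then $\varphi_{r,n}\equiv1$ on $Q_r(rn)$, while for each fixed $x$ at most $5^d$ indices $n$ satisfy $\varphi_{r,n}(x)\neq0$. Hence
\begin{align*}
\|f\|_{C(Q_r(rn))}\le\|\varphi_{r,n}f\|_{L^\infty(\mathbb R^d)}\le C(s,d)\,\|\varphi_{r,n}f\|_{H^s(\mathbb R^d)},
\end{align*}
and Lemma \ref{lem-comm} gives $\|\varphi_{r,n}f\|_{H^s}^2\le4^s\big(\|((1-\triangle)^{[s]+1}\varphi_{r,n})f\|^2+\|\varphi_{r,n}\langle D\rangle^sf\|^2\big)$. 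A direct computation shows $(1-\triangle)^{[s]+1}\varphi_{r,n}(x)=g_r(x/r-n)$ with $g_r:=(\mathrm{Id}-r^{-2}\triangle)^{[s]+1}\varphi\in C_0^\infty(\mathbb R^d)$ supported in $Q_2(0)$, and $\|g_r\|_{L^\infty}\le A(s,d,\varphi)$ whenever $r\ge1$. Summing over $n$, interchanging sum and integral, and using $\sum_n\varphi_{r,n}(x)^2\le5^d$ and $\sum_n|g_r(x/r-n)|^2\le5^d\|g_r\|_{L^\infty}^2$ together with $\|f\|\le\|f\|_{H^s}$, one obtains a constant $C_1=C_1(s,d)$ with
\begin{align*}
\Big\|\{\|f\|_{C(Q_r(rn))}\}_{n\in\mathbb Z^d}\Big\|_{l^2(\mathbb Z^d)}\le C_1\,\|f\|_{H^s(\mathbb R^d)}\qquad(r\ge1).
\end{align*}

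For $0<r<1$, set $f_r(y):=f(ry)$; then $\|f\|_{C(Q_r(rn))}=\|f_r\|_{C(Q_1(n))}$, so applying the case $r=1$ just established to $f_r$ yields $\|\{\|f\|_{C(Q_r(rn))}\}_n\|_{l^2(\mathbb Z^d)}\le C_1\|f_r\|_{H^s}$. Since $\widehat{f_r}(\xi)=r^{-d}\hat f(\xi/r)$, a change of variables gives $\|f_r\|_{H^s}^2=r^{-d}\int_{\mathbb R^d}(1+r^2|\eta|^2)^s|\hat f(\eta)|^2\,\mathrm d\eta\le r^{-d}\|f\|_{H^s}^2$ (using $r<1$, $s>0$). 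Combining the two regimes gives $\|\{\|f\|_{C(Q_r(rn))}\}_n\|_{l^2(\mathbb Z^d)}\le C_1\max\{1,r^{-d/2}\}\|f\|_{H^s}\le C_1(1+r^{-d/2})\|f\|_{H^s}$ for all $r>0$, which is the assertion with $C=C_1$. The main obstacle is exactly the bookkeeping of $r$-uniformity: one must verify that the Sobolev constant, the overlap multiplicity, and $\|(1-\triangle)^{[s]+1}\varphi_{r,n}\|_{L^\infty}$ stay bounded for $r\ge1$; estimating the last quantity crudely for small $r$ would only produce the power $r^{-2([s]+1)}$, which is strictly worse than $r^{-d/2}$, so the rescaling step is what recovers the sharp exponent.
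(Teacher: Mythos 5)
Your argument is correct, and the core machinery matches the paper's: localization by bounded-overlap cutoffs, the Sobolev embedding $H^s\hookrightarrow L^\infty$, Lemma~\ref{lem-comm} to transfer the localized $H^s$-norm to $L^2$-quantities, and rescaling $f_r(y)=f(ry)$ to treat the regime $r<1$, which is exactly where the factor $r^{-d/2}$ arises in both treatments. Where you diverge is in the regime $r>1$: the paper fixes cutoffs at scale $1$, proves the bound for $r=1$ only (Step~1), then handles $r>1$ by a separate geometric coarsening argument (Step~3), writing $r=m\rho$ with integer $m\ge2$ and $\rho\in(\tfrac12,1]$ and showing that the sum over the big cubes $Q_{m\rho}(m\rho n)$ is dominated by $3^d$ times the sum over the small cubes $Q_\rho(\rho n)$. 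You instead use cutoffs $\varphi_{r,n}=\varphi(\cdot/r-n)$ adapted to the scale $r$ and observe directly that $(1-\triangle)^{[s]+1}\varphi_{r,n}(x)=\bigl((\mathrm{Id}-r^{-2}\triangle)^{[s]+1}\varphi\bigr)(x/r-n)$, so the negative powers $r^{-2k}$ are all $\le1$ for $r\ge1$ and the overlap and $L^\infty$ bounds are uniform there; this makes the cutoff argument work verbatim for every $r\ge1$ and lets you drop the covering step entirely. Your route is marginally shorter and isolates cleanly the scaling obstruction (the cutoff derivatives scale badly for $r<1$, which is precisely why rescaling is needed and why it produces $r^{-d/2}$ rather than the worse $r^{-2([s]+1)}$), at the cost of a slightly more careful bookkeeping of the $r$-dependence of the cutoff; the paper's version keeps the cutoff computation simplest (pure $r=1$) and pays with the extra combinatorial Step~3. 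Both are sound.
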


\begin{proof}
 Arbitrarily fix $s>\frac{d}{2}$ and $f\in H^s(\mathbb{R}^d)$.
We divide the proof into the following three steps:

\vskip 5pt
\noindent{\it Step 1. We prove that for some $C=C(s,d)$,
\begin{align}\label{wang2.10}
\left\|\{\| f\|_{C(Q_1(n))}\}_{n\in \mathbb{Z}^d}\right\|_{l^2(\mathbb{Z}^d)}\leq C \|f\|_{H^s(\mathbb{R}^d)}.
\end{align}}
Write $s=[s]+\alpha:=m+\alpha$. Let $\varphi: \mathbb{R}^d \rightarrow [0,1]$ be a smooth cutoff function so that
\begin{eqnarray}\label{Wang27}
\varphi=1 \;\;\mbox{over}\;\; Q_1(0)
\;\;\mbox{and}\;\;
\varphi=0  \;\;\mbox{over}\;\;  \mathbb R^d\setminus Q_2(0).
\end{eqnarray}
For each
$n\in \mathbb{Z}^d$, we set
\begin{eqnarray}\label{Wang27-n}
\varphi_n(x) := \varphi(x-n)\;\;\mbox{for each}\;\;x\in \mathbb{R}^d.
\end{eqnarray}
 Since $s>\frac{d}{2}$, we have that
 $H^s(\mathbb{R}^d)  \hookrightarrow C(\mathbb{R}^d) \cap L^\infty(\mathbb R^d)$. Then  there is $C_1(s,d)>0$ so that
\begin{align}\label{equ-715-2}
\|\varphi_nf\|_{C(\mathbb{R}^d)}\leq C_1(s,d)\|\varphi_nf\|_{H^s(\mathbb{R}^d)}
\;\;\mbox{for all}\;\;n\in \mathbb{Z}^d.
\end{align}

We now claim that
\begin{align}\label{equ-810-10}
\sum_{n\in \mathbb{Z}^d}\|f\|^2_{C(Q_1(n))}
&\leq C_1(s,d)^2 2^{2s} \int_{\mathbb{R}^d}\sum_{n\in \mathbb{Z}^d} \Big( \big|\big((1-\triangle)^{m+1}\varphi_n\big) f \big|^2 +|\varphi_n \langle D \rangle^s f|^2 \Big)\,\mathrm dx.
\end{align}
To this end, two facts are given in order. Fact One:  It follows from (\ref{Wang27-n}) and (\ref{Wang27}) that
\begin{align}\label{equ-715-1}
\| f\|_{C(Q_1(n))} \leq \|\varphi_nf\|_{C(\mathbb{R}^d)}\;\;\mbox{for all}\;\;n\in \mathbb{Z}^d;
\end{align}
 Fact Two:  It follows from Lemma \ref{lem-comm} that
\begin{align}\label{equ-716-1}
\|\varphi_n f\|^2_{H^s(\mathbb{R}^d)}  \leq 2^{2s} \Big( \big\| \big((1-\triangle)^{m+1}\varphi_n\big) f \big\|^2 + \|\varphi_n \langle D \rangle^s f\|^2 \Big)\;\;\mbox{for all}\;\;n\in \mathbb{Z}^d.
\end{align}
From \eqref{equ-715-1},
 \eqref{equ-715-2} and \eqref{equ-716-1}, we are led to  \eqref{equ-810-10}.

Next,  according to  the properties of $\varphi_n$ (see (\ref{Wang27-n}) and (\ref{Wang27})), there is $C_2(s)>0$ so that
\begin{align}\label{equ-810-11}
\sum_{n\in \mathbb{Z}^d} \Big( |(1-\triangle)^{m+1}\varphi_n|^2 +|\varphi_n|^2 \Big) \leq C_2(s).
\end{align}

Finally, from  \eqref{equ-810-10} and \eqref{equ-810-11}, we find that
$$
\sum_{n\in \mathbb{Z}^d}\|f\|^2_{C(Q_1(n))}\leq  C_1(s,d)^2 2^{2s} C_2(s)
\int_{\mathbb{R}^d}\left(|f|^2 +|\langle D \rangle^s f|^2\right)\,\mathrm dx,
$$
which, together with (\ref{Wgehngsheng21}) and (\ref{wangyuanyuan22}), leads to (\ref{wang2.10}).

\vskip 5pt
\noindent{\it Step 2. We show (\ref{equ-lpq-625-1}) for the case that  $0<r\leq 1$.}

\noindent Arbitrarily fix  $r\in(0,1]$.  Set $g(x) := f(rx)$, $x\in \mathbb{R}^d$. Then it follows that  $g\in H^s(\mathbb{R}^d)$ and
\begin{align}\label{equ-716-14}
\| f\|^2_{C(Q_r(rn))}  =  \| g\|^2_{C(Q_1(n))}\;\;\mbox{for all}\;\;  n\in \mathbb{Z}^d .
\end{align}
Since $\widehat{g}(\xi) = r^{-d}\widehat{f}(r^{-1}\xi)$, $\xi\in\mathbb R^d$,   we deduce from \eqref{equ-716-14} and (\ref{wang2.10})
 (where $f$ is replaced by $g$) that when $C=C(s,d)$ is given by (\ref{wang2.10}),
\begin{eqnarray*}\label{equ-716-15}
& & \sum_{n\in \mathbb{Z}^d}\| f\|^2_{C(Q_r(rn))}  = \sum_{n\in \mathbb{Z}^d} \| g\|^2_{C(Q_1(n))} \leq C\|g\|^2_{H^s(\mathbb{R}^d)}
\nonumber\\
&=& C  \int_{\mathbb R^d_\xi}  (1+|\xi|^2)^s\big|r^{-d}\widehat{f}(r^{-1}\xi) \big|^2 \mathrm d\xi
=C r^{-d} \int_{\mathbb R^d_\xi}  (1+r^2|\xi|^2)^s \big|\widehat{f}(\xi) \big|^2 \mathrm d\xi
\leq Cr^{-d}\|f\|^2_{H^s(\mathbb{R}^d)}.
\end{eqnarray*}
(In the last inequality in the above, we used the fact that $0<r\leq 1$.)
This proves \eqref{equ-lpq-625-1} for the case that  $0<r\leq 1$.

\vskip 5pt
\noindent{\it Step 3. We show (\ref{equ-lpq-625-1}) for the case that  $r>1$.}

\noindent  We first claim that for any $2\leq m\in \mathbb{N}$ and $\rho>0$,
\begin{align}\label{equ-716-16}
\sum_{n\in \mathbb{Z}^d}\| f\|^2_{C(Q_{m\rho}(m\rho n))}  \leq   3^d\sum_{n\in \mathbb{Z}^d}\| f\|^2_{C(Q_\rho(\rho n))}.
\end{align}
 For this purpose, we arbitrarily fix $2\leq m\in \mathbb{N}$ and $\rho>0$.
 Three observations are given in order.
 \begin{itemize}
   \item[(O1)] Since $f\in  H^s(\mathbb{R}^d)$
 and $s>\frac{d}{2}$, $f$ is a bounded continuous function over $\mathbb{R}^d$.
 Then for each $n\in \mathbb{Z}^d$, there is $x_n^*\in Q_{m\rho}(m\rho n)$
 so that
 \begin{eqnarray*}\label{equ-716-17}
\| f\|_{C(Q_{m\rho}(m\rho n))} = |f(x_n^*)|.
\end{eqnarray*}

   \item[(O2)] For each $n\in \mathbb{Z}^d$, we can choose and then fix a $\beta_n\in \mathbb{Z}^d$ so that
\begin{eqnarray*}\label{equ-716-18}
Q_\rho(\rho \beta_n) \subset Q_{m\rho}(m\rho n))\;\;\mbox{and}\;\;
|f(x^*_n)|=\| f\|_{C(Q_\rho(\rho \beta_n))}.
\end{eqnarray*}
Such $\beta_n$ exists since $2\leq m\in \mathbb{N}$. (It may happen that
when $n\neq n'$, $\beta_n=\beta_{n'}$.)

   \item[(O3)] For each $x\in \mathbb{R}^d$,
there are  at most $3^d$ lattice points $n\in \mathbb{Z}^d$ so that $x\in Q_{m\rho}(m\rho n))$.
Thus, for each $n'\in \mathbb{Z}^d$, the family
$\{Q_\rho(\rho \beta_n)\}_{n\in \mathbb{Z}^d}$  has at most $3^d$ cubes coinciding with  $Q_\rho(\rho n')$.
 \end{itemize}
Then it follows from (O1) and (O2) that
\begin{align}\label{equ-716-19}
\sum_{n\in \mathbb{Z}^d}\| f\|^2_{C(Q_{m\rho}(m\rho n))}
= \sum_{n\in \mathbb{Z}^d}
\| f\|^2_{C(Q_{\rho}(\rho \beta_n))}.
\end{align}
Meanwhile, it follows by  (O3) that
\begin{align}\label{equ-716-20}
\sum_{n\in \mathbb{Z}^d}
\| f\|^2_{C(Q_{\rho}(\rho \beta_n))} \leq 3^d\sum_{n'\in \mathbb{Z}^d}\| f\|^2_{C(Q_\rho(\rho n'))}.
\end{align}
Combining \eqref{equ-716-19} and \eqref{equ-716-20} leads to the claim \eqref{equ-716-16}.

We now   arbitrarily fix   $r> 1$. One can always find $2\leq m\in \mathbb{N}^+$ and $\rho\in (\frac{1}{2},1]$ so that $r=m\rho$. Then by \eqref{equ-716-16}, we find that
\begin{align}\label{equ-716-21}
\sum_{n\in \mathbb{Z}^d}\| f\|^2_{C(Q_{r}(r n))} = \sum_{n\in \mathbb{Z}^d}\| f\|^2_{C(Q_{m\rho}(m\rho n))}  \leq   3^d\sum_{n\in \mathbb{Z}^d}\| f\|^2_{C(Q_\rho(\rho n))}.
\end{align}
Meanwhile, since $\rho \in (\frac{1}{2},1]$,
it follows from Step 2 that (\ref{equ-lpq-625-1}) holds for $r=\rho$. Thus we have that
\begin{align}\label{equ-716-22}
\sum_{n\in \mathbb{Z}^d}\| f\|^2_{C(Q_\rho(\rho n))}
\leq C\big(1+\rho^{-\frac{d}{2}}\big)\|f\|^2_{H^s(\mathbb{R}^d)}\leq C\big(1+2^{\frac{d}{2}}\big)\|f\|^2_{H^s(\mathbb{R}^d)}.
\end{align}
By \eqref{equ-716-21} and \eqref{equ-716-22},
 we find that
 $$
 \sum_{n\in \mathbb{Z}^d}\| f\|^2_{C(Q_{r}(r n))}
 \leq C3^d\big(1+2^{\frac{d}{2}}\big)\|f\|^2_{H^s(\mathbb{R}^d)}:=\hat C(d,s)\|f\|^2_{H^s(\mathbb{R}^d)}\leq \hat C(d,s)\big(1+r^{-\frac{d}{2}}\big)\|f\|^2_{H^s(\mathbb{R}^d)},
 $$
 which leads to  \eqref{equ-lpq-625-1} for the case that $r> 1$.  This ends the proof of Proposition \ref{lem-smooth-l2-r}.
\end{proof}

We now on the position to prove  Theorem \ref{thm-f-d}.


\begin{proof}[Proof of Theorem \ref{thm-f-d}]
  Arbitrarily fix $N>0$.
 Define two operators $\chi_{\leq N}(D)$ and $\chi_{> N}(D)$ on $L^2(\mathbb R^d)$
 in the following manner: For each $g\in L^2(\mathbb R^d)$, set
\begin{eqnarray}\label{aug-chi-N-c-11}
\mathcal F(\chi_{\leq N}(D)g)(\xi) :=\chi_{Q_{\pi N}}(\xi) \mathcal F(g)(\xi)\;\;\mbox{and}\;\;
\mathcal F(\chi_{> N}(D)g)(\xi):= \chi_{Q_{\pi N}^c}(\xi) \mathcal F(g)(\xi),~\xi\in \mathbb{R}^d,
\end{eqnarray}
where $Q_{\pi N}^c$ denotes the complementary set of $Q_{\pi N}$ in $\mathbb R^n$.
 Then, we arbitrarily fix $s>\frac{d}{2}$ and  $f\in H^s(\mathbb{R}^d)$.
We organize the rest of the proof by two steps.

\vskip 5pt
\noindent {\it Step 1. We make  decompositions on $f$. }
\vskip 5pt
First of all, by \eqref{aug-chi-N-c-11}, we can directly check that
\begin{align}\label{equ-714-1}
f=\chi_{\leq N}(D)f+\chi_{> N}(D)f\;\;\mbox{in}\;\;
H^s(\mathbb{R}^d).
\end{align}
In particular, we have that
\begin{eqnarray}\label{yubiao9152.29}
f, \;\chi_{\leq N}(D)f,\;\chi_{> N}(D)f\in H^s(\mathbb{R}^d).
\end{eqnarray}
By \eqref{equ-714-1} and the imbedding
$H^s(\mathbb{R}^d)  \hookrightarrow C(\mathbb{R}^d)$ (which follows from the assumption that $s>\frac{d}{2}$), we have that
\begin{eqnarray}\label{GGwang9132.27}
f(n/N)=(\chi_{\leq N}(D)f)(n/N)+(\chi_{> N}(D)f)(n/N)\;\;\mbox{for all}\;\;
n\in\mathbb{Z}^d.
\end{eqnarray}

From (\ref{yubiao9152.29}), we can use Proposition \ref{lem-smooth-l2-r} to see that
\begin{eqnarray*}
\{f(n/N)\}_{n\in \mathbb{Z}^d},\;\{(\chi_{\leq N}(D)f)(n/N)\}_{n\in \mathbb{Z}^d},\;\{(\chi_{> N}(D)f)(n/N)\}_{n\in \mathbb{Z}^d}
\in l^2(\mathbb{Z}^d).
\end{eqnarray*}
These, along with the conclusion (iii) in Lemma \ref{lem-orth}, imply that
\begin{eqnarray}\label{yubiao9152.31}
\sum_{n\in \mathbb{Z}^d}f(n/N)f_{N,n},\;
\sum_{n\in \mathbb{Z}^d}(\chi_{\leq N}f)(n/N)f_{N,n},\;
\sum_{n\in \mathbb{Z}^d}(\chi_{>N}f)(n/N)f_{N,n}\in L^2(\mathbb{R}^d).
\end{eqnarray}
From \eqref{yubiao9152.31} and \eqref{yubiao9152.29}, we can define
\begin{align}\label{equ-716-fd-0-1}
R_N := - \sum_{n\in \mathbb{Z}^d} \big( \chi_{> N}(D)f \big)\left({n}/{N} \right)  f_{N,n} +\chi_{> N}(D)f\;\;\mbox{in}\;\; L^2(\mathbb{R}^d).
\end{align}
From \eqref{GGwang9132.27} and \eqref{yubiao9152.31}, one can easily obtain that
\begin{eqnarray}\label{GGwang9132.33}
\sum_{n\in \mathbb{Z}^d}f(n/N)f_{N,n}=
\sum_{n\in \mathbb{Z}^d} \big( \chi_{\leq N}(D)f \big)
\left( {n}/{N} \right) f_{N,n}
+
\sum_{n\in \mathbb{Z}^d}(\chi_{>N}(D)f)(n/N)f_{N,n}\;\;\mbox{in}\;\;L^2(\mathbb{R}^d).
\end{eqnarray}

Meanwhile, by \eqref{aug-chi-N-c-11} and \eqref{wang1}, we have that
 $\chi_{\leq N}(D)f\in \mathscr{P}_N$. Thus, we can use
 Theorem \ref{lem-shannon}, where $f$ is replaced by $\chi_{\leq N}(D)f$, to see that
\begin{align}\label{equ-714-2}
\chi_{\leq N}(D)f=\sum_{n\in \mathbb{Z}^d} \big( \chi_{\leq N}(D)f \big)
\left( {n}/{N} \right) f_{N,n}
\;\;\mbox{in}\;\;
L^2(\mathbb{R}^d).
\end{align}

Finally, by \eqref{equ-716-fd-0-1}, \eqref{GGwang9132.33}, \eqref{equ-714-2}
and \eqref{equ-714-1}, we obtain that
\begin{align}\label{equ-714-3}
f=\sum_{n\in \mathbb{Z}^d} f\left({n}/{N} \right) f_{N,n}+ R_N \;\;\mbox{in}\;\;
L^2(\mathbb{R}^d).
\end{align}

\vskip 5pt

\noindent {\it Step 2. We estimate two terms on the right hand side of \eqref{equ-716-fd-0-1}.}
\vskip 5pt

For the second one, we use the Parseval-Plancherel formula and (\ref{aug-chi-N-c-11})  to see that
\begin{eqnarray}\label{equ-716-fd-1}
\|\chi_{> N}(D)f\|
=  \|\chi_{Q_{\pi N}^c} \widehat{f}\|
\leq
\left( \int_{ \xi\in Q_{\pi N}^c } (1+|\xi|^2)^{s} |\hat{f}(\xi)|^2 \mathrm d\xi \right)^{1/2}.
\end{eqnarray}
Next we deal with the term: $-\sum_{n\in \mathbb{Z}^d} \big( \chi_{> N}(D)f \big)\left({n}/{N} \right)  f_{N,n}$. By (iii) of Lemma \ref{lem-orth} in Appendix, we find that
\begin{eqnarray}\label{equ-716-fd-2}
\big\|\sum_{n\in \mathbb{Z}^d} \big( \chi_{> N}(D)f \big) \left({n}/{N} \right) f_{N,n} \big\|
&=& N^{-d/2} \big\| \big\{ \big( \chi_{> N}(D)f \big) \left({n}/{N} \right) \big\}_{n\in \mathbb{Z}^d} \big\|_{l^2(\mathbb{Z}^d)}
\nonumber\\
&\leq& N^{-d/2} \big\| \big\{  \| \chi_{> N}(D)f\|_{C(Q_{\frac{1}{N}}({n}/{N}))} \big\}_{n\in \mathbb{Z}^d} \big\|_{l^2(\mathbb{Z}^d)}.
\end{eqnarray}
Since $s>\frac{d}{2}$, we can apply Proposition \ref{lem-smooth-l2-r}, where $(s,f,r)$ is replaced by $(s,\chi_{> N}(D)f,{1}/{N})$,   to find   $C(s,d)>0$ so that
\begin{align}\label{equ-716-fd-3}
\Big\| \Big\{\| \chi_{> N}(D)f\|_{C(Q_{\frac{1}{N}}(\frac{n}{N}))} \Big\}_{n\in \mathbb{Z}^d} \Big\|_{l^2(\mathbb{Z}^d)}
\leq C(s,d)(1+N^{d/2})\|\chi_{> N}(D)f\|_{ H^{s}(\mathbb R^d) }.
\end{align}
Now, it follows from \eqref{equ-716-fd-2}-\eqref{equ-716-fd-3} that
\begin{eqnarray}\label{equ-716-fd-5}
\Big\| \sum_{n\in \mathbb{Z}^d} \big( \chi_{> N}(D)f \big) \left({n}/{N}\right) f_{n,N} \Big\|
 \leq  C(s,d)\left(1+N^{-d/2}\right)
  \left( \int_{ \xi\in Q_{\pi N}^c } \left(1+\pi^2 |\xi|^2\right)^{s} |\hat{f}(\xi)|^2 \mathrm d\xi \right)^{1/2}.
\end{eqnarray}
\vskip 5pt
\noindent {Step 3. We finish the proof.}
\vskip 5pt
From  \eqref{equ-716-fd-0-1}, \eqref{equ-716-fd-1} and (\ref{equ-716-fd-5}), we see that
\begin{eqnarray*}\label{equ-716-fd-6}
\|R_N\|
 &\leq& \big( C(s,d)+1 \big) \left(1+N^{-d/2}\right)
\Big( \int_{ \xi\in Q_{\pi N}^c } \left(1+|\xi|^2\right)^{s} |\hat{f}(\xi)|^2 \mathrm d\xi \Big)^{1/2}.
\end{eqnarray*}
This, together with (\ref{equ-714-3}), leads to
\eqref{wang962.26} and
 \eqref{equ-decomp}. Hence, we end the proof of Theorem \ref{thm-f-d}.
\end{proof}

\subsection{Some estimates on solutions of the heat equation}

This subsection presents one lemma
on properties of the heat equation.
 It will be used in the proof of the conclusion (ii)  in Theorem \ref{thm-decomp}.
  It   will also be used in the proof of Proposition \ref{cor-purb-2} (in the next subsection) which will play an important role in the proof of the conclusion (iv) in Theorem \ref{thm-decomp}.

\begin{lemma}\label{lem-lpq}
 There is $C=C(d)$ so that for each solution  $u$ to \eqref{equ-heat},  each $r>0$ and each $T>0$,
\begin{eqnarray}
\left\|\{\| u(T,\cdot)\|_{C(Q_{r}(rn))}\}_{n\in \mathbb{Z}^d}\right\|_{l^2(\mathbb{Z}^d)}
&\leq&   C\big( 1+(Tr^{-2})^{\frac{d}{4}} \big)
T^{-\frac{d}{4}} \|u(0,\cdot)\|;
\label{equ-lpq-1}\\
\left\|\{\|\nabla u(T,\cdot)\|_{C(Q_{r}(rn))}\}_{n\in \mathbb{Z}^d}\right\|_{l^2(\mathbb{Z}^d)}
&\leq& C\big( 1+(Tr^{-2})^{\frac{d}{4}} \big)
T^{-\frac{d}{4}-\frac{1}{2}} \|u(0,\cdot)\|;
\label{equ-lpq-2}\\
\left\|\left\{ u(T,rn)\right\}_{n\in \mathbb{Z}^d}\right\|_{l^2(\mathbb{Z}^d)}
&\leq& C\left(1+(Tr^{-2})^{\frac{d}{4}}\right)T^{-\frac{d}{4}}\|u(0,\cdot)\|.
\label{equ-coeff-bound-1}
\end{eqnarray}
\end{lemma}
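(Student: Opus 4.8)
The plan is to establish \eqref{equ-lpq-1} first, by a direct convolution estimate for the heat kernel combined with Schur's test, and then to read off \eqref{equ-lpq-2} by applying \eqref{equ-lpq-1} to the spatial derivatives of the solution at time $T/2$, and \eqref{equ-coeff-bound-1} as an immediate corollary of \eqref{equ-lpq-1}. Note first that the right-hand side of \eqref{equ-lpq-1} is comparable to $C(d)(T^{-d/4}+r^{-d/2})\|u(0,\cdot)\|$, since $(1+(Tr^{-2})^{d/4})T^{-d/4}=T^{-d/4}+r^{-d/2}$; this is the form I would aim for.

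For \eqref{equ-lpq-1}, fix a solution $u$, write $u_0=u(0,\cdot)$, so $u(T,\cdot)=G_T*u_0$ with $G_T(x)=(4\pi T)^{-d/2}e^{-|x|^2/(4T)}$. Tile $\mathbb R^d$ by the disjoint cubes $D_m:=rm+[-r/2,r/2)^d$, $m\in\mathbb Z^d$, and set $b_m:=\|u_0\|_{L^2(D_m)}$, so that $\sum_m b_m^2=\|u_0\|^2$. For $x\in Q_r(rn)$, the Cauchy--Schwarz inequality gives $|u(T,x)|\le\sum_m a_{nm}b_m$ with $a_{nm}:=\sup_{x\in Q_r(rn)}\|G_T(x-\cdot)\|_{L^2(D_m)}$, hence $\|\{\|u(T,\cdot)\|_{C(Q_r(rn))}\}_n\|_{l^2(\mathbb Z^d)}\le\|(a_{nm})\|_{\mathcal L(l^2(\mathbb Z^d))}\|u_0\|$, and it suffices to bound the operator norm of $(a_{nm})$ by $C(d)(T^{-d/4}+r^{-d/2})$. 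Two estimates on $a_{nm}$ feed into this: the crude bound $a_{nm}\le\|G_T\|_{L^2(\mathbb R^d)}=C(d)T^{-d/4}$ valid for all $n,m$; and, for $|n-m|_\infty\ge 2$, the Gaussian-decay bound coming from $\mathrm{dist}(Q_r(rn),D_m)\ge r|n-m|_\infty/4\ge r|n-m|/(4\sqrt d)$, namely $a_{nm}\le C(d)\,T^{-d/2}r^{d/2}e^{-r^2|n-m|^2/(64dT)}$.

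The operator-norm bound is the technical heart, and I would obtain it from Schur's test. Split each row sum $\sum_m a_{nm}$ into the $3^d$ ``near'' terms $|n-m|_\infty\le 1$, contributing at most $C(d)T^{-d/4}$ by the crude bound, and the ``far'' terms $|n-m|_\infty\ge 2$. For the far part, use $e^{-\alpha|k|^2}\le e^{-2\alpha}e^{-\alpha|k|^2/2}$ (valid since $|k|^2\ge 4$ there) with $\alpha:=r^2/(64dT)$, together with $\sum_{k\in\mathbb Z^d}e^{-\beta|k|^2}\le C(d)(1+\beta^{-d/2})$, to bound the far part of the row sum by $C(d)\,T^{-d/2}r^{d/2}e^{-2\alpha}(1+\alpha^{-d/2})$; then the two algebraic identities $T^{-d/2}r^{d/2}\alpha^{-d/2}=C(d)r^{-d/2}$ and $T^{-d/2}r^{d/2}=C(d)T^{-d/4}\alpha^{d/4}$ (so $T^{-d/2}r^{d/2}e^{-2\alpha}\le C(d)T^{-d/4}\sup_{\alpha>0}\alpha^{d/4}e^{-2\alpha}$) collapse it to $C(d)(T^{-d/4}+r^{-d/2})$. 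The column sums obey the same bound by symmetry of the exponent in $|n-m|$, so Schur's test gives $\|(a_{nm})\|_{\mathcal L(l^2(\mathbb Z^d))}\le C(d)(T^{-d/4}+r^{-d/2})$, proving \eqref{equ-lpq-1}. I expect this Schur estimate — balancing the prefactor $T^{-d/2}r^{d/2}$ against the Gaussian tail uniformly in the two independent parameters $r$ and $T$ — to be the main obstacle. (An alternative for the range $T\gtrsim r^2$ is to combine the rescaled Sobolev embedding $H^{s_0}(Q_{2r}(rn))\hookrightarrow L^\infty(Q_r(rn))$ with $s_0:=\lfloor d/2\rfloor+1$, the bounded overlap of the cubes $\{Q_{2r}(rn)\}_n$, and the heat-smoothing bound $\|\langle D\rangle^{s_0}e^{T\triangle}u_0\|\le C(s_0)T^{-s_0/2}\|u_0\|$; but the kernel argument above covers all $r,T$ at once.)

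Finally, for \eqref{equ-lpq-2}, write $\partial_j u(T,\cdot)=e^{(T/2)\triangle}v_j$ with $v_j:=\partial_j e^{(T/2)\triangle}u_0$; the Fourier-side bound $\|v_j\|\le\big(\sup_\xi|\xi|^2e^{-T|\xi|^2}\big)^{1/2}\|u_0\|\le C\,T^{-1/2}\|u_0\|$ shows that $\partial_j u(T,\cdot)$ is the time-$(T/2)$ value of the heat flow with datum $v_j$, so applying \eqref{equ-lpq-1} with $(T,u_0)$ replaced by $(T/2,v_j)$ (using $(1+((T/2)r^{-2})^{d/4})(T/2)^{-d/4}=C(d)(1+(Tr^{-2})^{d/4})T^{-d/4}$) and summing over $j=1,\dots,d$ via $|\nabla u|\le\sum_j|\partial_j u|$ yields \eqref{equ-lpq-2} with the extra factor $T^{-1/2}$. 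And \eqref{equ-coeff-bound-1} is immediate from \eqref{equ-lpq-1}, since $rn\in Q_r(rn)$ forces $|u(T,rn)|\le\|u(T,\cdot)\|_{C(Q_r(rn))}$ for every $n$.
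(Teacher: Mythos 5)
Your proof is correct, but for the core estimate \eqref{equ-lpq-1} it takes a genuinely different route from the paper. The paper rescales parabolically ($v_0(x):=u_0(\sqrt T x)$, so $u(T,\cdot)=(e^{\triangle}v_0)(\cdot/\sqrt T)$), invokes Proposition \ref{lem-smooth-l2-r} with $s=d$ and cube size $r/\sqrt T$ to control the lattice of local sup-norms of $e^{\triangle}v_0$ by $\|e^{\triangle}v_0\|_{H^d(\mathbb R^d)}$, and then uses the smoothing bound $\|D^\alpha e^{\triangle}v_0\|\leq C\|v_0\|$ together with $\|v_0\|=T^{-d/4}\|u_0\|$; this is short because Proposition \ref{lem-smooth-l2-r} (built on the commutator-type Lemma \ref{lem-comm}) is already needed for Theorem \ref{thm-f-d}. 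You instead prove \eqref{equ-lpq-1} directly from the heat-kernel representation: tiling $\mathbb R^d$, bounding the local sup-norms by a nonnegative matrix acting on $\{\|u_0\|_{L^2(D_m)}\}_m$, and controlling its $\ell^2$ operator norm by Schur's test, with the crude bound $\|G_T\|_{L^2}=C(d)T^{-d/4}$ for near cubes and the Gaussian-decay bound $C(d)T^{-d/2}r^{d/2}e^{-r^2|n-m|^2/(64dT)}$ for far cubes; your balancing of the prefactor against the tail (via $T^{-d/2}r^{d/2}\alpha^{-d/2}=C(d)r^{-d/2}$ and $T^{-d/2}r^{d/2}e^{-2\alpha}\leq C(d)T^{-d/4}$ with $\alpha=r^2/(64dT)$) checks out and yields exactly $C(d)(T^{-d/4}+r^{-d/2})$, which equals the stated right-hand side since $(1+(Tr^{-2})^{d/4})T^{-d/4}=T^{-d/4}+r^{-d/2}$. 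Your argument is self-contained and elementary (no Bessel-potential machinery), valid uniformly in $r,T$, at the cost of the explicit Schur computation; the paper's is shorter given its existing infrastructure and reuses a proposition that is needed elsewhere anyway. Your derivations of \eqref{equ-lpq-2} (apply \eqref{equ-lpq-1} at time $T/2$ to the derivatives, with the Fourier-side bound $\|\partial_j e^{(T/2)\triangle}u_0\|\leq CT^{-1/2}\|u_0\|$) and of \eqref{equ-coeff-bound-1} (evaluation at the cube center) coincide with the paper's.
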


\begin{proof} Arbitrarily fix $T>0$, $r>0$ and a solution $u$ to \eqref{equ-heat}.
Let $u_0(x):=u(0,x)$, $x\in \mathbb{R}^d$. Set
 $$
 v_0(x):=u_0\big(\sqrt{T}x\big)\;\;\mbox{for a.e.}\;\;x\in \mathbb{R}^d.
 $$
We first show (\ref{equ-lpq-1}). To this end,
we let
\begin{eqnarray}\label{wang9102.39}
w(x):=\left(e^{\triangle}v_0\right)\Big(\frac{x}{\sqrt{T}}\Big)\;\;\mbox{for each}\;\; x\in \mathbb{R}^d.
\end{eqnarray}
We claim the following three facts:

  \noindent Fact One: For each $\alpha\in \mathbb{N}^d$, there is $C(\alpha,d)>0$ so that
\begin{align}\label{equ-lpq-3}
\|D^\alpha_x u(T,\cdot)\|\leq C(\alpha,d) T^{-\frac{|\alpha|}{2}}\|u_0\|.
\end{align}

 \noindent Fact Two: We have that
\begin{align}\label{equ-7-1-1}
u(T,x) =\left(e^{T\triangle}u_0\right)(x)= w(x) \;\;\mbox{for each}\;\;x\in \mathbb{R}^d.
\end{align}

\noindent  Fact Three:  There is $C=C(d)$ so that
\begin{eqnarray}\label{equ-7-1-2}
&&\Big\|\left\{\|w\|_{C(Q_r(nr))}\right\}_{n\in \mathbb{Z}^d}\Big\|_{l^2(\mathbb{Z}^d)}
\leq  C\Big(1+\big(Tr^{-2}\big)^{\frac{d}{4}}\Big)T^{-\frac{d}{4}}\|u_0\|.
\end{eqnarray}

To show (\ref{equ-lpq-3}), we use  the Parseval-Plancherel formula to see that
\begin{align}\label{equ-lpq-7.5}
\left\|D^\alpha u(T,\cdot)\right\| =
\left( \int_{\mathbb R^d_\xi} \big|
 \xi^\alpha e^{-T|\xi|^2}\widehat{u_0}(\xi) \big|^2 \mathrm d\xi
 \right)^{1/2}
\leq
\left(\frac{|\alpha|}{2T}\right)^{\frac{|\alpha|}{2}}
e^{-\frac{|\alpha|}{2}}\left\|\widehat{u_0}\right\|
=\left(\frac{|\alpha|}{2T}\right)^{\frac{|\alpha|}{2}}
e^{-\frac{|\alpha|}{2}}\|u_0\|,
\end{align}
which leads to \eqref{equ-lpq-3}.

To show \eqref{equ-7-1-1}, we use the heat kernel to see that
\begin{align}\label{equ-7-1-3}
u(T,x)=\left(e^{T\triangle}u_0\right)(x) =\frac{1}{(4\pi T)^{d/2}}\int_{\mathbb{R}^d}e^{-\frac{|x-y|^2}{4T}}u_0(y)\,\mathrm dy, \quad x\in \mathbb{R}^d.
\end{align}
Changing variable $y \mapsto \sqrt{T}y$ in \eqref{equ-7-1-3} and using
 \eqref{wang9102.39}, we find that
$$
u(T,x)=\frac{1}{(4\pi)^{d/2}}\int_{\mathbb{R}^d}
e^{-\frac{\left|\frac{x}{\sqrt{T}}-y\right|^2}{4}}u_0\left(\sqrt{T}y\right)\,\mathrm dy = w(x)\;\;\mbox{for any}\;\; x\in \mathbb{R}^d.
$$
 This  gives \eqref{equ-7-1-1}.

To prove \eqref{equ-7-1-2}, we first apply Proposition \ref{lem-smooth-l2-r}, where
$(s,r,f)$ is taken as $(d, \frac{r}{\sqrt{T}},e^{\triangle}v_0)$, to obtain that
$$
\Big\|\Big\{\left\|e^{\triangle}v_0
   \right\|_{C(Q_{\frac{r}{\sqrt{T}}}
  (n\frac{r}{\sqrt{T}}))}\Big\}_{n\in \mathbb{Z}^d}\Big\|_{l^2(\mathbb{Z}^d)}
  \leq
  C\left(1+(Tr^{-2})^{\frac{d}{4}} \right)
 \left\|e^{\triangle}v_0 \right\|_{H^d(\mathbb{R}^d)}.
  $$
  Here $C$ is given by Proposition \ref{lem-smooth-l2-r}.
 Then from the above,  \eqref{wang9102.39} and
 \eqref{equ-lpq-3} (where $T=1$ and $|\alpha|\leq d$), we can find  some $C'(d)>0$
 so that
\begin{eqnarray*}
  \left\|\left\{\|w\|_{C(Q_r(nr))}\right\}_{n\in \mathbb{Z}^d}\right\|_{l^2(\mathbb{Z}^d)}
  \leq C'(d)\left(1+(Tr^{-2})^{\frac{d}{4}} \right)\|v_0 \|
=C'(d)\left(1+(Tr^{-2})^{\frac{d}{4}} \right)T^{-\frac{d}{4}}\|u_0\|.
\end{eqnarray*}
 This gives \eqref{equ-7-1-2}.

Now, it follows by \eqref{equ-7-1-1},  \eqref{wang9102.39} and \eqref{equ-7-1-2} that
\begin{align*}
\left\|\{\| u(T,\cdot)\|_{C(Q_{r}(rn))}\}_{n\in \mathbb{Z}^d}\right\|_{l^2(\mathbb{Z}^d)}&= \Big\|\Big\{\Big\| \left(e^{\triangle}v_0\right)
\Big(\frac{\cdot}{\sqrt{T}}\Big)\Big\|_{C(Q_{r}(rn))}\Big\}_{n\in \mathbb{Z}^d}\Big\|_{l^2(\mathbb{Z}^d)}\\
&\leq C\left(1+(Tr^{-2})^{\frac{d}{4}}\right) T^{-\frac{d}{4}}\|u_0\|,
\end{align*}
which leads to \eqref{equ-lpq-1}.

We next show  \eqref{equ-lpq-2}. Arbitrarily fix $j\in\{1,\cdots,d\}$. Notice that $\partial_j u$ is the solution to  the heat equation:
$\partial_t v=\triangle v$ over $(0,\infty)\times \mathbb{R}^d$ with the initial condition: $v(0,\cdot)=\partial_j u_0(\cdot)\in H^{-1}(\mathbb{R}^d)$.
Thus, by the smooth effect of the heat equation and by the standard translation argument, we can
 apply \eqref{equ-lpq-1}, where $(u(\cdot.\cdot),u_0(\cdot),T)$ is replaced by $(\partial_j u(T/2+\cdot, \cdot), \partial_j u(T/2,\cdot),{T}/{2})$, to get that
\begin{eqnarray*}
\left\|\{\| \partial_j u(T,\cdot)\|_{C(Q_{r}(rn))}\}_{n\in \mathbb{Z}^d}\right\|_{l^2(\mathbb{Z}^d)}
\leq C\left(1+(Tr^{-2})^{\frac{d}{4}}\right)
(T/2)^{-\frac{d}{4}}\|\partial_j u(T/2,\cdot)\|.
\end{eqnarray*}
This yields that
\begin{eqnarray*}
& & \sum_{n\in \mathbb Z^d} \| \nabla u(T,\cdot)\|_{C(Q_{r}(rn))}^2
\leq \sum_{n\in \mathbb Z^d} \sum_{j=1}^d \| \partial_j u(T,\cdot)\|_{C(Q_{r}(rn))}^2
\nonumber\\
&\leq&  C^2\left(1+(Tr^{-2})^{\frac{d}{4}}\right)^2
(T/2)^{-\frac{d}{2}} \sum_{j=1}^d\|\partial_j u(T/2,\cdot)\|^2,
\end{eqnarray*}
which, together with  \eqref{equ-lpq-3}, leads to \eqref{equ-lpq-2}.

Finally, \eqref{equ-coeff-bound-1} follows directly from  \eqref{equ-lpq-1}.
This ends the proof of
Lemma \ref{lem-lpq}.
\end{proof}


\subsection{Stability of some functions on sampling lattice points}
This section gives some stability estimates for some functions on sampling lattice points. The next Lemma \ref{lem-conti} serves for the proof of Corollary \ref{cor-purb-1}, while the later  will be used in the proof of Proposition \ref{cor-purb-2} which  plays a big role in the proof of the conclusion (iv) in Theorem \ref{thm-decomp}. Recall (\ref{wang1}) for the definition of $\mathscr{P}_N$.

\begin{lemma}\label{lem-conti}
Let $B$ and $L$ be two positive constants. Let
$\{\lambda_n\}_{n\in \mathbb{Z}^d}\subset\mathbb{R}^d$ satisfy that
\begin{align}\label{equ-726-1}
\sup_{h\in \mathscr{P}_{1},\|h\|\leq 1} \Big( \sum_{n\in \mathbb{Z}^d} |h(\lambda_n)|^2 \Big)
\leq B.
\end{align}
Assume that  $\{\mu_n\}_{n\in \mathbb{Z}^d} \subset \mathbb{R}^d$ satisfies that
$\sup_{n\in\mathbb{Z}^d}|\lambda_n-\mu_n|\leq L$.
Then for every $f\in \mathscr{P}_{1}$,
$$
\sum_{n\in \mathbb{Z}^d} |f(\lambda_n)-f(\mu_n)|^2
\leq B(e^{\pi d L}-1)^2\|f\|^2.
$$
\end{lemma}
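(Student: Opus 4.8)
The plan is to expand the increment $f(\mu_n)-f(\lambda_n)$ as a power series in the displacement $\delta_n:=\mu_n-\lambda_n$ and to estimate each term of that series by feeding a suitable $n$-independent family of functions in $\mathscr{P}_1$ into the sampling hypothesis \eqref{equ-726-1}. First I would record that, by rescaling \eqref{equ-726-1}, one has $\sum_{n\in\mathbb Z^d}|h(\lambda_n)|^2\le B\|h\|^2$ for every $h\in\mathscr{P}_1$. Next, since $f\in\mathscr{P}_1$ its Fourier transform is supported in $Q_\pi(0)$, so Fourier inversion gives
$$
f(\mu_n)-f(\lambda_n)=(2\pi)^{-d/2}\int_{Q_\pi(0)}e^{i\lambda_n\cdot\xi}\big(e^{i\delta_n\cdot\xi}-1\big)\widehat f(\xi)\,\mathrm d\xi .
$$
Expanding $e^{i\delta_n\cdot\xi}-1=\sum_{k\ge1}(i\delta_n\cdot\xi)^k/k!$ and interchanging sum and integral (justified by dominated convergence, since $\widehat f\in L^1(Q_\pi(0))$ and the series converges uniformly on the compact set $Q_\pi(0)$), I obtain $f(\mu_n)-f(\lambda_n)=\sum_{k\ge1}\frac{i^k}{k!}\,g_{n,k}(\lambda_n)$, where $g_{n,k}$ is the function with $\widehat{g_{n,k}}(\xi)=(\delta_n\cdot\xi)^k\widehat f(\xi)$. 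By the triangle (Minkowski) inequality in $\ell^2(\mathbb Z^d)$ it then suffices to bound $\big(\sum_{n}|g_{n,k}(\lambda_n)|^2\big)^{1/2}$ for each fixed $k$.

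The crucial step is to eliminate the $n$-dependence of $g_{n,k}$ via a multinomial expansion: from $(\delta_n\cdot\xi)^k=\sum_{|\alpha|=k}\binom{k}{\alpha}\delta_n^\alpha\xi^\alpha$ we get $g_{n,k}=\sum_{|\alpha|=k}\binom{k}{\alpha}\delta_n^\alpha\psi_\alpha$, where $\psi_\alpha$ is defined by $\widehat{\psi_\alpha}(\xi)=\xi^\alpha\widehat f(\xi)$. Each $\psi_\alpha$ belongs to $\mathscr{P}_1$, is independent of $n$, and satisfies $\|\psi_\alpha\|\le\pi^{|\alpha|}\|f\|$ by Plancherel because $|\xi^\alpha|\le\pi^{|\alpha|}$ for $\xi\in Q_\pi(0)$. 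Applying the rescaled hypothesis to $h=\psi_\alpha$ gives $\big(\sum_n|\psi_\alpha(\lambda_n)|^2\big)^{1/2}\le\sqrt B\,\pi^{k}\|f\|$ whenever $|\alpha|=k$. Combining this with $|\delta_n^\alpha|\le|\delta_n|^{|\alpha|}\le L^k$, a second application of Minkowski's inequality in $\ell^2(\mathbb Z^d)$, and the identity $\sum_{|\alpha|=k}\binom{k}{\alpha}=d^k$, I arrive at $\big(\sum_n|g_{n,k}(\lambda_n)|^2\big)^{1/2}\le\sqrt B\,\|f\|\,(\pi d L)^k$.

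Finally, summing over $k$ with the weights $1/k!$,
$$
\Big(\sum_{n\in\mathbb Z^d}|f(\mu_n)-f(\lambda_n)|^2\Big)^{1/2}\le\sqrt B\,\|f\|\sum_{k\ge1}\frac{(\pi d L)^k}{k!}=\sqrt B\,\|f\|\,(e^{\pi d L}-1),
$$
and squaring yields the claimed estimate. The only genuinely delicate point is that the difference functions $g_{n,k}$ depend on $n$, which is precisely what blocks a direct application of \eqref{equ-726-1}; the monomial decomposition into the $n$-free functions $\psi_\alpha$ resolves this, at the cost of the combinatorial factor $d^k$, which is harmless since it is absorbed by $k!$. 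Everything else — Fourier inversion on $\mathscr{P}_1$, Plancherel, the two uses of Minkowski's inequality, and the interchange of summation and integration — is routine.
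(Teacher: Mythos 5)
Your proof is correct, and it is in substance the same argument the paper intends (the paper cites Young for $d=1$ and says to use the multivariable Taylor expansion; you carry this out in full). The only cosmetic difference is that you perform the expansion on the Fourier side, writing $e^{i\delta_n\cdot\xi}-1$ as a power series and passing to the monomial-modulated functions $\psi_\alpha$ with $\widehat{\psi_\alpha}(\xi)=\xi^\alpha\hat f(\xi)$, whereas the classical Young-style proof Taylor-expands $f(\mu_n)-f(\lambda_n)=\sum_{|\alpha|\ge1}\frac{\delta_n^\alpha}{\alpha!}D^\alpha f(\lambda_n)$ directly in physical space; since $\psi_\alpha=(-i)^{|\alpha|}D^\alpha f$, your bound $\|\psi_\alpha\|\le\pi^{|\alpha|}\|f\|$ is precisely Bernstein's inequality, and the combinatorics $\sum_{|\alpha|=k}\binom{k}{\alpha}=d^k$ reproduce the $e^{\pi dL}-1$ constant in the same way as summing $\sum_{|\alpha|\ge1}\frac{(\pi L)^{|\alpha|}}{\alpha!}$.
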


\begin{proof}
For the case that $d=1$, Lemma \ref{lem-conti} was proved in \cite[Lemma 3, p. 181]{Y}. For the high-dimensional case, the proof is very  similar, provided that one uses the Taylor expansion in several variables, instead of  one variable. We omit the details. This ends the proof of Lemma \ref{lem-conti}.
\end{proof}

\begin{corollary}\label{cor-purb-1}
Let $N>0$ and $\varepsilon\in (0,1)$.  Assume that $\{\lambda_n\}_{n\in \mathbb{Z}}\subset \mathbb R^d$ satisfies that
$\sup_{n\in \mathbb{Z}^d}|\lambda_n-{n}/{N}|\leq {\varepsilon}/{N}$.
Then for every $f\in \mathscr{P}_{N}$,
\begin{eqnarray}\label{wang952.35}
\big\| \big\{ f(\lambda_n)-f\big({n}/{N}\big) \big\}_{n\in \mathbb{Z}^d} \big\|_{l^2(\mathbb{Z}^d)}
\leq \varepsilon \pi d e^{\pi d} N^{\frac{d}{2}} \|f\| .
\end{eqnarray}
\end{corollary}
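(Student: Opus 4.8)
The plan is to reduce Corollary~\ref{cor-purb-1} to Lemma~\ref{lem-conti} by a rescaling argument that turns functions in $\mathscr{P}_N$ into functions in $\mathscr{P}_1$. First I would fix $N>0$, $\varepsilon\in(0,1)$, the perturbed points $\{\lambda_n\}_{n\in\mathbb{Z}^d}$ with $\sup_n|\lambda_n-n/N|\le\varepsilon/N$, and an arbitrary $f\in\mathscr{P}_N$. I would set $g(x):=f(x/N)$; since $\widehat g(\xi)=N^d\widehat f(N\xi)$, the condition $\operatorname{supp}\widehat f\subset Q_{\pi N}(0)$ becomes $\operatorname{supp}\widehat g\subset Q_{\pi}(0)$, i.e. $g\in\mathscr{P}_1$. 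By the Plancherel-type change of variables (or Theorem~\ref{lem-shannon} with $N=1$ versus general $N$), $\|g\|^2=N^d\|f\|^2$. The idea is then to apply Lemma~\ref{lem-conti} to $g$, with the lattice $n\mapsto n$ (so $\lambda_n$ there is $n$) and the perturbed points $\mu_n:=N\lambda_n$.

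Next I would check the two hypotheses of Lemma~\ref{lem-conti}. For the sampling-stability bound \eqref{equ-726-1} with the exact integer lattice $\{n\}_{n\in\mathbb Z^d}$: by the second identity in \eqref{equ-shannon} (Theorem~\ref{lem-shannon}) applied with $N=1$, every $h\in\mathscr{P}_1$ satisfies $\sum_{n\in\mathbb Z^d}|h(n)|^2=\|h\|^2$, so \eqref{equ-726-1} holds with $B=1$. For the displacement bound: $|n-\mu_n|=|n-N\lambda_n|=N|n/N-\lambda_n|\le N\cdot(\varepsilon/N)=\varepsilon$, so we may take $L=\varepsilon$. Lemma~\ref{lem-conti} then yields
\[
\sum_{n\in\mathbb Z^d}|g(n)-g(N\lambda_n)|^2\le (e^{\pi d\varepsilon}-1)^2\|g\|^2.
\]
Unwinding the definition of $g$, $g(n)=f(n/N)$ and $g(N\lambda_n)=f(\lambda_n)$, while $\|g\|^2=N^d\|f\|^2$, so
\[
\big\|\big\{f(\lambda_n)-f(n/N)\big\}_{n\in\mathbb Z^d}\big\|_{l^2(\mathbb Z^d)}^2\le (e^{\pi d\varepsilon}-1)^2 N^d\|f\|^2.
\]

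Finally I would simplify the constant. Since $0<\varepsilon<1$, the elementary inequality $e^{t}-1\le t e^{t}$ for $t\ge 0$ gives $e^{\pi d\varepsilon}-1\le \pi d\varepsilon\, e^{\pi d\varepsilon}\le \varepsilon\,\pi d\,e^{\pi d}$; taking square roots of the displayed inequality produces exactly \eqref{wang952.35}. I do not anticipate a serious obstacle: the only point requiring a little care is getting the rescaling of the Fourier support and of the $L^2$ norm consistent (the factor $N^{d/2}$ in the final bound comes precisely from $\|g\|=N^{d/2}\|f\|$), and making sure the hypothesis \eqref{equ-726-1} is invoked with constant $B=1$ coming from the Plancherel identity in \eqref{equ-shannon} rather than some larger frame bound. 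Everything else is bookkeeping.
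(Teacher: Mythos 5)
Your proposal is correct and follows essentially the same route as the paper: rescale via $g(x)=f(x/N)$ to land in $\mathscr{P}_1$, obtain $B=1$ from the second identity in \eqref{equ-shannon}, apply Lemma~\ref{lem-conti} with $(g,1,\varepsilon,n,N\lambda_n)$, and then simplify using $e^t-1\le te^t$. The only cosmetic difference is that you spell out $\|g\|=N^{d/2}\|f\|$ explicitly, which the paper leaves implicit in the phrase ``along with the definition of $g$.''
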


\begin{proof}
Arbitrarily fix $N$, $\varepsilon$,  $\{\lambda_n\}_{n\in \mathbb{Z}}$ and $f$ as required.
 Set $g(x):=f(N^{-1}x)$, $x\in \mathbb{R}^d$. Then one can directly check that
$g\in \mathscr{P}_{1}$.
 Meanwhile, by the second equality in \eqref{equ-shannon} in Theorem \ref{lem-shannon}, we find that
\begin{eqnarray*}
\int_{\mathbb{R}^d}|h(x)|^2\,\mathrm dx = \sum_{n\in \mathbb{Z}^d} \big|h(n)\big|^2
\;\;\mbox{for all}\;\;h\in \mathscr{P}_{1}.
\end{eqnarray*}
From this, we see that
\begin{align}\label{wang952.36}
\sup_{h\in \mathscr{P}_{1},\|h\|\leq 1} \Big( \sum_{n\in \mathbb{Z}^d} |h(n)|^2 \Big)
=1.
\end{align}
Since $g\in \mathscr{P}_{1}$ and because of  \eqref{wang952.36},  we can use Lemma \ref{lem-conti}, where $(f,B,L,\lambda_n,\mu_n)$ is replaced by $(g,1,\varepsilon,n,N\lambda_n)$,  to obtain that
\begin{eqnarray*}\label{equ-726-3}
\left\|\{g(N\lambda_n)-g(n)\}_{n\in \mathbb{Z}^d}\right\|_{l^2(\mathbb{Z}^d)}
= \Big( \sum_{n\in \mathbb{Z}^d} |g(N\lambda_n)-g(n)|^2 \Big)^{1/2}
\leq (e^{\pi d \varepsilon}-1)\|g\|.
\end{eqnarray*}
Because $0\leq e^s-1\leq se^s$ for all $s\geq 0$, the above  indicates that
\begin{eqnarray*}\label{equ-726-5}
\|\{g(N\lambda_n)-g(n)\}_{n\in \mathbb{Z}^d}\|_{l^2(\mathbb{Z}^d)}\leq \pi d \varepsilon e^{\pi d \varepsilon} \|g\|\leq \pi d e^{\pi d} \varepsilon\|g\|,
\end{eqnarray*}
which, along with the definition of $g$,
leads to \eqref{wang952.35}. This ends the proof of Corollary \ref{cor-purb-1}.
\end{proof}

\begin{proposition}\label{cor-purb-2}
There is $C(d)$ so that any solution  $u$  to Equation \eqref{equ-heat}
has the property: If  $T>0$, $N>0$, $\varepsilon\in (0,1)$ and
 $\{\lambda_n\}_{n\in\mathbb Z^d}\subset\mathbb R^d$ satisfies that
 \begin{eqnarray}\label{equ-726-7}
 \sup_{n\in \mathbb{Z}^d}|\lambda_n-{n}/{N}|\leq {\varepsilon}/{N},
 \end{eqnarray}
then
\begin{eqnarray}
\big\| \big\{ u(T,\lambda_n)-u\big(T,{n}/{N}\big) \big\}_{n\in \mathbb{Z}^d} \big\|_{l^2(\mathbb{Z}^d)}
\leq C \varepsilon N^{\frac{d}{2}}
\Big( 1+(TN^2)^{-\frac{d}{4}-\frac{1}{2}} e^{-TN^2} \Big)
\|u(0,\cdot)\|.
\label{wang952.48}
\end{eqnarray}

%
%
\end{proposition}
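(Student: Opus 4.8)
The plan is to reduce the estimate to Corollary \ref{cor-purb-1} applied to the low-frequency part of the solution, and to control the high-frequency part directly using Lemma \ref{lem-lpq} together with a Bernstein-type gradient estimate. Concretely, fix $T>0$, $N>0$, $\varepsilon\in(0,1)$ and $\{\lambda_n\}$ satisfying \eqref{equ-726-7}, and decompose $u(T,\cdot)=\chi_{\leq N}(D)u(T,\cdot)+\chi_{>N}(D)u(T,\cdot)=:v+w$, where $\chi_{\leq N}(D),\chi_{>N}(D)$ are the Fourier projections from \eqref{aug-chi-N-c-11}. Then for each $n$,
\[
u(T,\lambda_n)-u\big(T,\tfrac{n}{N}\big)=\big(v(\lambda_n)-v(\tfrac nN)\big)+\big(w(\lambda_n)-w(\tfrac nN)\big),
\]
so by the triangle inequality in $l^2(\mathbb Z^d)$ it suffices to bound the two pieces separately.

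For the low-frequency piece, note $v\in\mathscr P_N$ by construction, so Corollary \ref{cor-purb-1} gives
\[
\big\|\big\{v(\lambda_n)-v(\tfrac nN)\big\}_{n}\big\|_{l^2(\mathbb Z^d)}\le \varepsilon\pi d\,e^{\pi d}N^{d/2}\|v\|\le \varepsilon\pi d\,e^{\pi d}N^{d/2}\|u(T,\cdot)\|\le \varepsilon\pi d\,e^{\pi d}N^{d/2}\|u(0,\cdot)\|,
\]
using $\|v\|\le\|u(T,\cdot)\|\le\|u(0,\cdot)\|$ since $\chi_{\leq N}(D)$ and $e^{T\triangle}$ are contractions on $L^2$. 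This already accounts for the $C\varepsilon N^{d/2}\|u(0,\cdot)\|$ term in \eqref{wang952.48}.

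For the high-frequency piece I would use the mean value inequality: since $\lambda_n$ and $n/N$ both lie in the cube $Q_{\varepsilon/N}(n/N)\subset Q_{1/N}(n/N)$, convexity of the cube gives $|w(\lambda_n)-w(\tfrac nN)|\le \tfrac{\varepsilon}{N}\sqrt d\,\|\nabla w\|_{C(Q_{1/N}(n/N))}$, hence
\[
\big\|\big\{w(\lambda_n)-w(\tfrac nN)\big\}_{n}\big\|_{l^2(\mathbb Z^d)}\le \tfrac{\varepsilon\sqrt d}{N}\big\|\big\{\|\nabla w\|_{C(Q_{1/N}(n/N))}\big\}_{n}\big\|_{l^2(\mathbb Z^d)}.
\]
Now $w=\chi_{>N}(D)e^{T\triangle}u_0$; writing $w=e^{(T/2)\triangle}g$ with $g:=\chi_{>N}(D)e^{(T/2)\triangle}u_0$, $g$ solves the heat equation with data satisfying $\|g\|\le e^{-\pi^2N^2 T/2}\|u_0\|$ because on the support of $\chi_{Q_{\pi N}^c}$ one has $e^{-\frac T2|\xi|^2}\le e^{-\frac T2\pi^2N^2}$ (the cube $Q_{\pi N}^c$ excludes $|\xi|_\infty\le\pi N$, so in particular $|\xi|\ge \pi N$ there). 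Applying \eqref{equ-lpq-2} of Lemma \ref{lem-lpq} to the solution $e^{t\triangle}g$ at time $T/2$ with $r=1/N$ yields
\[
\big\|\big\{\|\nabla w\|_{C(Q_{1/N}(n/N))}\big\}_{n}\big\|_{l^2(\mathbb Z^d)}\le C\big(1+(TN^{2}/2)^{d/4}\big)(T/2)^{-d/4-1/2}\,e^{-\pi^2N^2T/2}\|u_0\|.
\]
Multiplying by $\varepsilon\sqrt d/N$, absorbing the polynomial factor $(1+(TN^2)^{d/4})T^{-d/4-1/2}N^{-1}=N^{d/2}\cdot(1+(TN^2)^{d/4})(TN^2)^{-d/4-1/2}\le 2N^{d/2}(TN^2)^{-d/4-1/2}$ (for $TN^2\le 1$; for $TN^2\ge1$ the exponential makes the whole term negligible anyway), and using that $e^{-\pi^2N^2T/2}\le C e^{-TN^2}$, gives a bound of the form $C\varepsilon N^{d/2}(TN^2)^{-d/4-1/2}e^{-TN^2}\|u_0\|$, matching the second term in \eqref{wang952.48}. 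Adding the two contributions completes the proof.

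**Main obstacle.** The delicate point is the bookkeeping of the powers of $T$ and $N$ so that the two regimes $TN^2\lesssim1$ and $TN^2\gtrsim1$ both yield the clean exponential-times-polynomial bound stated in \eqref{wang952.48}; in particular one must check that the factor $N^{-1}$ coming from the cube size $1/N$ combines correctly with the $T^{-d/4-1/2}$ from the gradient estimate to produce exactly $N^{d/2}(TN^2)^{-d/4-1/2}$, and that $e^{-\pi^2 N^2 T/2}$ dominates any polynomial growth in $TN^2$ so it can be replaced by $e^{-TN^2}$ up to a dimensional constant. The analytic content — the Bernstein gradient bound and the spectral decay of $\chi_{>N}(D)e^{T\triangle}$ — is routine given Lemma \ref{lem-lpq}.
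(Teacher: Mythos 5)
Your proposal is correct and follows essentially the same route as the paper's own proof: the same frequency-cutoff decomposition into $\chi_{\leq N}(D)$ and $\chi_{>N}(D)$ pieces, Corollary~\ref{cor-purb-1} for the low-frequency part, and the mean value theorem combined with the gradient estimate \eqref{equ-lpq-2} of Lemma~\ref{lem-lpq} (applied after halving the time so that $\|\chi_{>N}(D)e^{T\triangle/2}u_0\|\leq e^{-\pi^2 N^2 T/2}\|u_0\|$) for the high-frequency part. The only cosmetic difference is in absorbing the polynomial factor $1+(TN^2)^{d/4}$: the paper uses the single inequality $1+(TN^2)^{d/4}\leq C(d)\,e^{(\pi^2/2-1)TN^2}$ uniformly, which avoids the case split on $TN^2\lessgtr 1$ you introduce, but both versions yield the stated bound.
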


\begin{proof}
Arbitrarily fix $T>0$, $N>0$, $\varepsilon\in (0,1)$, $\{\lambda_n\}_{n\in\mathbb Z^d}$ and   $u$  as required. Let $\chi_{\leq N}(D)$ and $\chi_{>N}(D)$ be the  operators defined in (\ref{aug-chi-N-c-11}) (in the proof of Theorem \ref{thm-f-d}).  Set $u_0(\cdot):= u(0,\cdot)$.  Then it follows that
\begin{align}\label{equ-726-7-1}
u(T,\cdot) = \big(e^{T\triangle} u_0\big)(\cdot)
 = \big(e^{T\triangle}\chi_{\leq  N}(D)u_0\big)(\cdot)
 +  \big(e^{T\triangle}\chi_{>  N}(D)u_0\big)(\cdot)\;\;\mbox{in}\;\;L^2(\mathbb{R}^d).
\end{align}

We now claim that there exists $C_1(d)>0$ so that
\begin{eqnarray}\label{equ-726-10}
 & & \big\| \big\{ (e^{T\triangle}\chi_{>  N}(D)u_0)(\lambda_n)
 - (e^{T\triangle}\chi_{>  N}(D)u_0)({n}/{N}) \big\}_{n\in \mathbb{Z}^d} \big\|_{l^2(\mathbb{Z}^d)}
 \nonumber\\
&\leq&  C_1(d) \varepsilon N^{\frac{d}{2}}
(TN^2)^{-\frac{d}{4}-\frac{1}{2}}e^{-TN^2}\|u_0\|.~~
\end{eqnarray}
Indeed, since $e^{T\triangle}\chi_{>  N}(D)u_0\in C^1(\mathbb{R}^d)$, we can use    the mean value theorem and (\ref{equ-726-7}) to  find that
\begin{align*}
\Big| (e^{T\triangle}\chi_{>  N}(D)u_0)(\lambda_n)
- (e^{T\triangle}\chi_{>  N}(D)u_0)\left({n}/{N}\right) \Big|
\leq \varepsilon N^{-1}  \sup_{x\in Q_{\frac{1}{N}}(\frac{n}{N}) }\left|\nabla (e^{T\triangle}\chi_{>  N}(D)u_0)(x)\right|,
\end{align*}
from which, it follows that
\begin{eqnarray*}
& & \| \left\{(e^{T\triangle}\chi_{>  N}(D)u_0)(\lambda_n)
-(e^{T\triangle}\chi_{>N}(D)u_0)\left({n}/{N}\right) \right\}_{n\in \mathbb{Z}^d} \big\|_{l^2(\mathbb{Z}^d)}
\nonumber\\
&\leq& \varepsilon N^{-1}  \Big\| \Big\{ \|\nabla e^{T\triangle}\chi_{>  N}(D)u_0\|_{C(Q_{\frac{1}{N}}(\frac{n}{N}))} \Big\}_{n\in \mathbb{Z}^d} \Big\|_{l^2(\mathbb{Z}^d)}.
\end{eqnarray*}
This, together with    \eqref{equ-lpq-2} in Lemma \ref{lem-lpq}, where $(r,T,u(0,\cdot))$ is replaced by $(1/N,T/2,e^{T\triangle/2}\chi_{> N}(D)u_0)$,  yields    that
for some $C(d)>0$,
\begin{eqnarray}\label{equ-726-9}
& & \big\| \big\{(e^{T\triangle}\chi_{>  N}(D)u_0)(\lambda_n)
- (e^{T\triangle}\chi_{>  N}(D)u_0)\left({n}/{N}\right) \big\}_{n\in \mathbb{Z}^d} \big\|_{l^2(\mathbb{Z}^d)}
\nonumber\\
&\leq& C(d)  \varepsilon  N^{-1}  \Big( 1+(TN^2)^{\frac{d}{4}} \Big) T^{-\frac{d}{4}-\frac{1}{2}}\|e^{T\triangle/2}\chi_{> N}(D)u_0\|.
\end{eqnarray}
Meanwhile, we clearly have that for some $C(d)>0$,
$$
\left\|e^{T\triangle/2}\chi_{>  N}(D)u_0\right\|\leq e^{-\frac{T}{2}(\pi N)^2}\|u_0\|
\;\;\mbox{and}\;\;
1+(TN^2)^{\frac{d}{4}} \leq C(d)e^{(\frac{\pi^2}{2}-1)TN^2}.
$$
These, along with  \eqref{equ-726-9}, lead to  the  claim (\ref{equ-726-10}).

%
Next, by the definition of $\chi_{\leq N}(D)$ (see (\ref{aug-chi-N-c-11})), we find that $e^{T\triangle}\chi_{\leq  N}(D)u_0 \in \mathscr{P}_N$.
From this and \eqref{equ-726-7}, we can  apply Corollary \ref{cor-purb-1},
where $f$ is replaced by $e^{T\triangle}\chi_{\leq  N}(D)u_0$, to find that
\begin{eqnarray}\label{equ-726-8}
& & \big\| \big\{(e^{T\triangle}\chi_{\leq  N}(D)u_0)(\lambda_n)
- (e^{T\triangle}\chi_{\leq  N}(D)u_0)\left({n}/{N}\right) \big\}_{n\in \mathbb{Z}^d} \big\|_{l^2(\mathbb{Z}^d)}
\nonumber\\
&\leq& \pi d e^{\pi d}\varepsilon N^{\frac{d}{2}}\|e^{T\triangle}\chi_{\leq  N}(D)u_0\| \leq \pi d e^{\pi d}\varepsilon N^{\frac{d}{2}}\|u_0\|.
\end{eqnarray}

Finally, it follows from \eqref{equ-726-7-1},  \eqref{equ-726-8} and \eqref{equ-726-10} that
\begin{eqnarray*}
 \big\| \big\{u(T,\lambda_n)-u\big(T,{n}/{N}\big) \big\}_{n\in \mathbb{Z}^d} \big\|_{l^2(\mathbb{Z}^d)}
\leq   \big( \pi d e^{\pi d} + C_1(d) \big) \varepsilon
 N^{\frac{d}{2}}
\Big( 1+(TN^2)^{-\frac{d}{4}-\frac{1}{2}}e^{-TN^2} \Big)
\|u_0\|,
\end{eqnarray*}
which leads to \eqref{wang952.48}.
This completes the proof of Proposition \ref{cor-purb-2}.
\end{proof}

\subsection{Proof of Theorem \ref{thm-decomp}}

We are now in the position to show Theorem \ref{thm-decomp}.

\begin{proof}[Proof of Theorem \ref{thm-decomp}]
Let $u$ be a solution to \eqref{equ-heat}.
 We will show the conclusions (i)-(iv) one by one.

\vskip 5pt
(i) Arbitrarily fix $T>0$ and $N>0$. Set
\begin{eqnarray}\label{sep-tj-thm1-i-pf-1}
 v_0(x):=u\left(0,\sqrt{T}x\right)\;\;\mbox{for a.e.}\;\;x\in\mathbb R^d.
\end{eqnarray}
Since $e^{\triangle}v_0 \in H^d(\mathbb R^d)$,  we can apply Theorem \ref{thm-f-d}, where $(s,N,f)$ is replaced by $(d,\sqrt{T}N,e^{\triangle}v_0)$, to find  $R_1\in L^2(\mathbb R^d)$ and $C(d)$, with
\begin{eqnarray}\label{sep-tj-thm1-i-pf-2}
 \|R_1\| \leq C(d) \Big( 1+ (TN^2)^{-d/4} \Big)
  \Big( \int_{\xi \in Q_{\pi \sqrt{T}N}^c} (1+|\xi|^2)^d | \mathcal F( e^{\triangle}v_0)(\xi)|^2 \mathrm d\xi \Big)^{1/2},
\end{eqnarray}
 so that
\begin{eqnarray}\label{sep-tj-thm1-i-pf-3}
 e^\Delta v_0 = \sum_{n\in\mathbb Z^d} \left(e^\Delta v_0\right)\Big(\frac{n}{\sqrt{T}N}\Big) f_{\sqrt{T}N,n}+R_1\;\;\;\mbox{in}\;\; L^2(\mathbb R^d).
\end{eqnarray}
We first  use \eqref{sep-tj-thm1-i-pf-3} to show that
\begin{eqnarray}\label{gwang9122.61}
 u(T,\cdot)
 = \sum_{n\in\mathbb Z^d} u(T,{n}/{N}) f_{N,n}(\cdot)
 + R(\cdot)\;\;\mbox{in}\;\;L^2(\mathbb R^d),
\end{eqnarray}
where $R$ is defined by
\begin{eqnarray}\label{wang9112.62}
R(x):= R_1\left({x}/{\sqrt{T}}\right)\;\;\mbox{for a.e.}\;\;x\in \mathbb{R}^d.
\end{eqnarray}
 Indeed, it follows  by \eqref{equ-basis} and \eqref{sep-tj-thm1-i-pf-1} that
\begin{eqnarray}\label{Wang9122.59}
f_{\sqrt{T}N,n}\big({x}/{\sqrt{T}}\big)=f_{N,n}(x)\;\;\mbox{for each}\;\;x\in \mathbb{R}^d
\end{eqnarray}
and
\begin{eqnarray}\label{Swang9132.60}
u(T,x) = \left(e^{\triangle}v_0\right)\big({x}/{\sqrt{T}}\big)\;\;
\mbox{for each}\;\;x\in \mathbb{R}^d.
\end{eqnarray}
Since by \eqref{sep-tj-thm1-i-pf-3}, we have that
$$
\sum_{n\in\mathbb Z^d} \left(e^\Delta v_0\right)\Big(\frac{n}{\sqrt{T}N}\Big) f_{\sqrt{T}N,n}\in L^2(\mathbb R^d),
$$
 it follows from \eqref{Wang9122.59}
and \eqref{Swang9132.60} that
\begin{eqnarray}\label{SwanG9152.61}
\sum_{n\in\mathbb{Z}^d}u(T,n/N)f_{N,n}\in L^2(\mathbb{R}^d).
\end{eqnarray}
Meanwhile, by \eqref{Swang9132.60}, (\ref{sep-tj-thm1-i-pf-3}), \eqref{Wang9122.59} and \eqref{wang9112.62}, one can directly check  that
\begin{eqnarray}\label{SGW9132.65}
 u(T,x)
 =\sum_{n\in\mathbb Z^d} u(T,{n}/{N}) f_{N,n}(x)
 + R(x)\;\;\mbox{for a.e.}\;\;x\in \mathbb{R}^d.
\end{eqnarray}
Since  $R_1\in L^2(\mathbb R^d)$ (see \eqref{sep-tj-thm1-i-pf-3}), we get from
\eqref{wang9112.62} that
$R\in L^2(\mathbb R^d)$. Thus, \eqref{gwang9122.61}
follows  from \eqref{SGW9132.65}
and \eqref{SwanG9152.61} at once.

 We next estimate $\|R\|$. By \eqref{wang9112.62} and
 (\ref{sep-tj-thm1-i-pf-2}), we see  that
\begin{eqnarray}\label{wang962.60}
 &&\|R\|^2
 = T^{d/2} \| R_1\|^2
 \leq  T^{d/2} C(d)^2   \Big( 1+ (TN^2)^{-d/4} \Big)^2
   \int_{\xi \in Q_{\pi \sqrt{T}N}^c} (1+|\xi|^2)^d  e^{-2|\xi|^2}  |\hat v_0(\xi)|^2 \mathrm d\xi
   \nonumber\\
 &\leq& T^{d/2} C(d)^2   \Big( 1+ (TN^2)^{-d/4} \Big)^2 e^{-2TN^2}
  \int_{\xi \in Q_{\pi \sqrt{T}N}^c} \left(1+|\xi|^2\right)^d  e^{-2(1-\pi^{-2})|\xi|^2}  |\hat v_0(\xi)|^2 \mathrm d\xi.
\end{eqnarray}
Since
$$
C_1(d):=\sup_{\xi\in\mathbb R^d} \left(1+|\xi|^2\right)^d  e^{-2(1-\pi^{-2})|\xi|^2} <\infty,
$$
 it follow from \eqref{wang962.60}  that
\begin{eqnarray}\label{wang9112.60}
\|R(\cdot)\|^2
\leq   C(d)^2 C_1(d)   \Big( 1+ (TN^2)^{-d/4} \Big)^2 e^{-2TN^2} \|u(0,\cdot)\|^2.
\end{eqnarray}
Here, we used the fact that $T^{\frac{d}{2}}\|v_0(\cdot)\|^2=\|u(0,\cdot)\|^2$
which follows from \eqref{sep-tj-thm1-i-pf-1}.

Finally,  from (\ref{gwang9122.61}) and \eqref{wang9112.60}, we are led to
 the conclusion (i) of Theorem \ref{thm-decomp}.

\vskip 5pt
(ii) It is  a direct consequence of  \eqref{equ-coeff-bound-1}
in Lemma \ref{lem-lpq}
  with $r=1/N$.

\vskip 5pt
(iii) Arbitrarily fix $T>0$ and $N>0$. First we suppose that the left hand side of \eqref{wang9} holds. Then we have that
\begin{align}\label{equ-725-1}
\lim_{m\rightarrow +\infty} \big\|u(T,\cdot)-\sum_{|n|\leq m} u(T, {n}/{N})f_{N,n}(\cdot)\big\|=0.
\end{align}
By the Parseval-Plancherel formula, using (i) in Lemma \ref{lem-orth} in Appendix, we get from  \eqref{equ-725-1} that
\begin{align}\label{equ-725-2}
\lim_{m\rightarrow +\infty} \big\|\widehat{u}(T,\cdot)-F_m(\cdot)\big\|=0,
\end{align}
where
$$
F_m(\xi) := \sum_{|n|\leq m} u\left(T, {n}/{N}\right) (2\pi)^{-\frac{d}{2} }N^{-d}e^{-i\frac{n}{N}\cdot\xi}
\chi_{Q_{\pi N}(0)} (\xi),~ \xi\in \mathbb R^d.
$$
 Since
$$
\mbox{supp}\, F_m \subset Q_{\pi N}(0)
\;\;\textmd{for all}\;\;
 m\in \mathbb{N},
$$
it follows from \eqref{equ-725-2} that
$$
\mbox{supp}\,\widehat{u}(T,\cdot)\subset Q_{\pi N}(0).
$$
This, along with the fact:
\begin{eqnarray}\label{GWS9142.71}
\hat u(T,\xi)=e^{-T|\xi|^2}\hat{u}(0,\xi),\;\xi\in\mathbb R^d,
\end{eqnarray}
 yields that
 \begin{eqnarray}\label{GWS9142.72}
  \mbox{supp}\, \hat{u}(0,\cdot)\subset Q_{\pi N}(0).
 \end{eqnarray}
 From \eqref{GWS9142.72} and \eqref{wang1}, we see that  $u(0,\cdot)\in \mathscr{P}_N$, which leads to the right hand side of \eqref{wang9}.

Next, we suppose that the right hand side of \eqref{wang9} is true. Then
 from \eqref{wang1},  we obtain \eqref{GWS9142.72}. This, along with \eqref{GWS9142.71}, yields that
   $\mbox{supp}\, \hat{u}(T,\cdot)\subset Q_{\pi N}(0)$,
   from which, we can use Theorem \ref{lem-shannon} to get the left hand side of \eqref{wang9}.

    \vskip 5pt
    (iv) Arbitrarily fix $T>0$, $N>0$ and  $\varepsilon\in (0,1)$. Arbitrarily take a sequence
$\{\lambda_n\}_{n\in\mathbb Z^d} \subset \mathbb R^d$ so that
\begin{eqnarray}\label{wang10112.26}
\sup_{n\in \mathbb{Z}^d}|\lambda_n-{n}/{N}|\leq {\varepsilon}/{N}.
\end{eqnarray}
According to (i) of Theorem \ref{thm-decomp}, there is $R_1(\cdot) \in L^2(\mathbb R^d)$ and $C_1=C_1(d)>0$, with
\begin{eqnarray}\label{aug-bound-R1}
\|R_1(\cdot)\|\leq C_1(1+({T}N^2)^{-\frac{d}{4}})e^{-TN^2}\|u(0,\cdot)\|,
\end{eqnarray}
 so that
\begin{align}\label{equ-decomp-726-13}
u(T,\cdot) = \sum_{n\in \mathbb{Z}^d} u(T,{n}/{N})f_{N,n}(\cdot) + R_1(\cdot)
\;\;\mbox{in}\;\;
L^2(\mathbb{R}^d).
\end{align}

 We now claim that for some $C_2=C_2(d)>0$,
\begin{eqnarray}\label{wang9122.70}
\Big\| \sum_{n\in \mathbb{Z}^d} \big( u(T,{n}/{N}) - u(T,\lambda_n) \big) f_{N,n}
\Big\|
\leq C_2 \varepsilon
\left( 1+(TN^2)^{-\frac{d}{4}-\frac{1}{2}}e^{-TN^2} \right)
\|u(0,\cdot)\|.
\end{eqnarray}
Indeed, by \eqref{wang10112.26}, we can use  \eqref{wang952.48} in
 Proposition \ref{cor-purb-2} to find  $C_2=C_2(d)>0$ so that
\begin{eqnarray*}\label{equ-decomp-726-13.5}
\big\| \big\{ u(T,\lambda_n)-u\big( T,{n}/{N} \big) \big\}_{n\in \mathbb{Z}^d} \big\|_{l^2(\mathbb{Z}^d)}
\leq C_2 \varepsilon N^{\frac{d}{2}}
\big( 1+(TN^2)^{-\frac{d}{4}-\frac{1}{2}}e^{-TN^2} \big)
\|u(0,\cdot)\|.
\end{eqnarray*}
From  this and the conclusion (iii) in Lemma \ref{lem-orth} in Appendix, where
$a_n$ is taken as $(u(T,\lambda_n)-u\big( T,{n}/{N}))$,
 we are led to  \eqref{wang9122.70}.

Since $\sum_{n\in\mathbb{Z}^d}u(T,n/N)f_{N,n}\in L^2(\mathbb{R}^d)$, it follows from
\eqref{wang9122.70} that
\begin{eqnarray}\label{wang9122.71}
\sum_{n\in\mathbb{Z}^d}u(T,\lambda_n)f_{N,n}\in L^2(\mathbb{R}^d)
\;\;\mbox{and}\;\;\sum_{n\in \mathbb{Z}^d} \big( u(T,{n}/{N}) - u(T,\lambda_n) \big) f_{N,n}\in L^2(\mathbb{R}^d).
\end{eqnarray}
From \eqref{equ-decomp-726-13} and \eqref{wang9122.71}, we find that
\begin{align}\label{equ-decomp-726-14}
u(T,\cdot) = \sum_{n\in \mathbb{Z}^d} u(T,\lambda_n)f_{N,n}(\cdot) + \widetilde{R}(\cdot)\;\;\mbox{in}\;\;
L^2(\mathbb{R}^d),
\end{align}
where
\begin{align}\label{equ-decomp-726-15}
\widetilde{R}(\cdot) := R_1(\cdot) + \sum_{n\in \mathbb{Z}^d} \big( u(T,{n}/{N})-u\big(T,\lambda_n \big) \big)f_{N,n}(\cdot)\in L^2(\mathbb{R}^d).
\end{align}
Meanwhile, by  (\ref{equ-decomp-726-15}),  (\ref{aug-bound-R1}) and \eqref{wang9122.70}, we obtain that
\begin{eqnarray*}\label{equ-decomp-726-17}
\|\widetilde{R}\| \leq \max\{C_1, C_2\}
\Big(
\varepsilon +
\Big( 1+(TN^2)^{-\frac{1}{2}} \Big)
(TN^2)^{-\frac{d}{4}}e^{-TN^2}
\Big)
\|u(0,\cdot)\|,
\end{eqnarray*}
which, along with (\ref{equ-decomp-726-14}), leads to \eqref{wang11} and \eqref{wang10}.

\vskip 5pt
In summary, we end the proof of
 Theorem \ref{thm-decomp}.

\end{proof}

\section{Weak asymptotic observability identity}

We first  shows that the asymptotic observability identity \eqref{equ-decomp-725} is not true when the sum on the right hand side of (\ref{equ-decomp-725}) is taken over finite lattice  points.

\begin{proposition}\label{prop-cout-ex}
For any $T>0$, $N>0$ and any $\mathcal{G}(\cdot)\in C(\mathbb R^+,\mathbb R^+)$,
it holds that
\begin{eqnarray}\label{wang9263.1}
 \sup \Big\|
  u(T,\cdot) - \sum_{n\in \mathbb{Z}^d,|n|\leq \mathcal{G}(N)}
  u\big(T,{n}/{N}\big)f_{N,n}(\cdot) \Big\|
 \geq  \frac{1}{2}(T+1)^{-\frac{d}{4}},
\end{eqnarray}
where the supremum is taken over all solution $u$ to \eqref{equ-heat}
with $\|u(0,\cdot)\|\leq 1$.
\end{proposition}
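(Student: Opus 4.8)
The plan is to prove \eqref{wang9263.1} by exhibiting, for the given $T$, $N$ and $\mathcal G$, a single admissible solution whose truncated reconstruction error already reaches the asserted level. The key observation is that once $N$ is frozen, $\mathcal G(N)$ is just a fixed number, so the sum in \eqref{wang9263.1} runs over the \emph{finite} index set $\Lambda:=\{n\in\mathbb Z^d:|n|\le\mathcal G(N)\}$. If the initial datum is a fixed profile translated far away from the origin, every sampling value $u(T,n/N)$ with $n\in\Lambda$ becomes arbitrarily small, whereas $\|u(T,\cdot)\|$ does not change at all; this gap is exactly what \eqref{wang9263.1} records.

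Concretely I would take the Gaussian $g(x):=(2\pi)^{-d/4}e^{-|x|^2/4}$, which has $\|g\|=1$, and for $y\in\mathbb R^d$ let $u^y$ be the solution of \eqref{equ-heat} with $u^y(0,\cdot)=g(\cdot-y)$; each $u^y$ is admissible for the supremum in \eqref{wang9263.1} since $\|u^y(0,\cdot)\|=1$. By translation invariance of $\{e^{t\triangle}\}_{t\ge0}$, together with $\widehat{e^{T\triangle}g}(\xi)=e^{-T|\xi|^2}\hat g(\xi)$ and Plancherel, one gets $u^y(T,\cdot)=(e^{T\triangle}g)(\cdot-y)$ and
\[
\|u^y(T,\cdot)\|^2=\int_{\mathbb R^d}e^{-2T|\xi|^2}|\hat g(\xi)|^2\,\mathrm d\xi=(T+1)^{-d/2},
\]
so $\|u^y(T,\cdot)\|=(T+1)^{-d/4}$ for every $y$; the Gaussian has been calibrated precisely so that this constant matches the statement.

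Next, $e^{T\triangle}g$ is again an explicit Gaussian, hence bounded, continuous, and decaying at infinity; since $\Lambda$ is finite this forces $\sum_{n\in\Lambda}|u^y(T,n/N)|^2=\sum_{n\in\Lambda}|(e^{T\triangle}g)(n/N-y)|^2\to0$ as $|y|\to\infty$. Invoking the orthogonality relation for $\{f_{N,n}\}$ (conclusion (iii) of Lemma \ref{lem-orth}), which here applies to a finite sum, we have
\[
\Big\|\sum_{n\in\Lambda}u^y(T,n/N)f_{N,n}\Big\|=N^{-d/2}\Big(\sum_{n\in\Lambda}|u^y(T,n/N)|^2\Big)^{1/2},
\]
and the right-hand side tends to $0$ as $|y|\to\infty$. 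Hence one may fix $y_0$ with $\big\|\sum_{n\in\Lambda}u^{y_0}(T,n/N)f_{N,n}\big\|\le\tfrac12(T+1)^{-d/4}$, and the triangle inequality gives
\[
\Big\|u^{y_0}(T,\cdot)-\sum_{n\in\Lambda}u^{y_0}(T,n/N)f_{N,n}\Big\|\ \ge\ \|u^{y_0}(T,\cdot)\|-\tfrac12(T+1)^{-d/4}\ =\ \tfrac12(T+1)^{-d/4}.
\]
Since $u^{y_0}$ lies in the admissible class, this bounds the supremum in \eqref{wang9263.1} from below by $\tfrac12(T+1)^{-d/4}$, which is the claim.

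Because the construction is explicit, I do not expect a genuine obstacle. The only points needing (routine) attention are the Gaussian convolution and Plancherel bookkeeping that pins $\|e^{T\triangle}g\|$ to exactly $(T+1)^{-d/4}$ — any fixed norm-one profile with enough low-frequency content would give at least $\tfrac12(T+1)^{-d/4}$, but the stated Gaussian is the cleanest — and the conceptual point that the finiteness of $\Lambda$ is precisely what lets the translated sampling values be made uniformly small. That mechanism is exactly what is unavailable for the full lattice sum in \eqref{equ-decomp-725}, which is why the asymptotic observability identity is genuine there but fails under truncation.
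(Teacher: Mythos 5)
Your proof is correct and follows essentially the same strategy as the paper: push a fixed Gaussian profile far from the origin so that the finitely many sampled values $u(T,n/N)$, $|n|\le\mathcal G(N)$, become negligible while $\|u(T,\cdot)\|$ is unchanged, then conclude by the triangle inequality. The only differences are cosmetic — you normalize the Gaussian to $\|g\|=1$ from the start and invoke a limiting argument in $|y|$ to locate the translate, whereas the paper writes down an explicit shift $L_N$; both yield the same lower bound.
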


\begin{proof}
Arbitrarily fix $T>0$,  $N>0$ and $\mathcal{G}(\cdot)\in C(\mathbb R^+,\mathbb R^+)$. We define
\begin{eqnarray}\label{equ-9-15-3}
\begin{array}{lll}
L_N&:=&\mathcal{G}(N)/N+ \sqrt{2(T+1) \Big[ \big( 2(\mathcal{G}(N)+1) \big)^d + 2N^{-d/2} + \ln 4^{1+d/4} \Big] };
\\
u_{N}(t,x) &:=& (4\pi (t+1))^{-d/2} \exp\left\{-\frac{|x-(L_N,0,\cdots,0)|^2}{4(t+1)}\right\},~ x\in\mathbb{R}^d, ~t\geq 0.
\end{array}
\end{eqnarray}
Then one can easily check that  $u_{N}$ solves the heat equation \eqref{equ-heat}
and satisfies that
\begin{eqnarray}\label{equ-9-15-3-00}
u_{N}(0,x) = (4\pi )^{-d/2}\exp\left\{-\frac{|x-(L_N,0,\cdots,0)|^2}{4}\right\}, ~ x\in\mathbb{R}^d.
\end{eqnarray}
We now claim that
\begin{align}\label{equ-9-15-4.5}
\Big\|\sum_{ n\in \mathbb{Z}^d,|n|\leq \mathcal{G}(N)} u_{N}\big(T,{n}/{N}\big)f_{N,n}(\cdot)\Big\| \leq
\frac{1}{2} (8\pi)^{-\frac{d}{4}}(T+1)^{-\frac{d}{4}}.
\end{align}
Indeed, it follows by   (ii) of Lemma \ref{lem-orth} that
\begin{eqnarray*}
  \Big\|\sum_{|n|\leq \mathcal{G}(N), n\in \mathbb{Z}^d} u_{N}\big(T,{n}/{N}\big)f_{N,n}\Big\|
 = N^{-d/2}\Big(\sum_{n\in \mathbb{Z}^d,|n|\leq \mathcal{G}(N)} u_{N}^2\big(T,{n}/{N}\big)\Big)^{1/2}.
\end{eqnarray*}
This, together with the second equality in (\ref{equ-9-15-3}), implies that
\begin{eqnarray}\label{equ-9-15-5}
& & \Big\|\sum_{|n|\leq \mathcal{G}(N), n\in \mathbb{Z}^d} u_{N}\big(T,{n}/{N}\big)f_{N,n}\Big\|
\nonumber\\
&=&  N^{-d/2}(4\pi (T+1))^{-d/4}
   \left( \sum_{n\in \mathbb{Z}^d,|n|\leq \mathcal{G}(N)} \exp\left\{-\frac{|\frac{n}{N}-(L_N,0,\cdots,0)|^2}{2(T+1)}\right\}
\right)^{1/2}.
\end{eqnarray}
Meanwhile, it follows from the first equality in (\ref{equ-9-15-3}) that
 \begin{eqnarray}\label{wang9263.6}
   \sup_{n\in \mathbb{Z}^d,|n|\leq \mathcal{G}(N)} |{n}/{N}-(L_N,0,\cdots,0) |^2
   &=& |L_N- \mathcal{G}(N)/N|^2
   \nonumber\\
   &=&  2(T+1) \Big[ \big( 2(\mathcal{G}(N)+1) \big)^d + 2N^{-d/2} + \ln 4^{1+d/4} \Big].
 \end{eqnarray}
 Notice that  the set $\{n\in \mathbb Z^d~:~|n|\leq \mathcal{G}(N)\}$ has at most $\big( 2(\mathcal{G}(N)+1) \big)^d$ elements. This, along with (\ref{equ-9-15-5}) and \eqref{wang9263.6}, yields that
 \begin{eqnarray*}
 \Big\|\sum_{|n|\leq \mathcal{G}(N), n\in \mathbb{Z}^d} u_{N}\big(T,{n}/{N}\big)f_{N,n}\Big\|
&\leq&  N^{-d/2}(4\pi (T+1))^{-d/4}
\nonumber\\
& & \times
   \left(  \big( 2(\mathcal{G}(N)+1) \big)^d e^{ -\big(-2(\mathcal{G}(N)+1)\big)^d}  e^{-2N^{-d/2}}  4^{-1-d/4}
\right)^{1/2}
\nonumber\\
&\leq& \frac{1}{2} (8\pi)^{-\frac{d}{4}}(T+1)^{-\frac{d}{4}},
\end{eqnarray*}
which  leads to  (\ref{equ-9-15-4.5}).

Finally, after some computations, we see from (\ref{equ-9-15-3}) and (\ref{equ-9-15-3-00}) that
\begin{eqnarray}\label{equ-9-15-5-1}
\|u_{N}(T,\cdot)\|= (8\pi)^{-\frac{d}{4}}(T+1)^{-\frac{d}{4}}
\;\;\mbox{and}\;\;
\|u_{N}(0,\cdot)\|=  (8\pi)^{-\frac{d}{4}}.
\end{eqnarray}
 From   (\ref{equ-9-15-4.5}) and the first equality in  (\ref{equ-9-15-5-1}), we get that for each $N>0$,
 \begin{eqnarray*}
   \Big\| u_N(T,\cdot) - \sum_{ n\in \mathbb{Z}^d,|n|\leq \mathcal{G}(N)} u_{N}\big(T,{n}/{N}\big)f_{N,n}(\cdot)\Big\|
   &\geq& \| u_N(T,\cdot)\|  -  \Big\| \sum_{ n\in \mathbb{Z}^d,|n|\leq \mathcal{G}(N)} u_{N}\big(T,{n}/{N}\big)f_{N,n}(\cdot)\Big\|
   \nonumber\\
   &\geq& \frac{1}{2} (8\pi)^{-\frac{d}{4}}(T+1)^{-\frac{d}{4}},
 \end{eqnarray*}
 which, along with the second equality in  (\ref{equ-9-15-5-1}),
 leads to \eqref{wang9263.1}. This ends the proof of Proposition \ref{prop-cout-ex}.
\end{proof}

Next we will introduce a weak asymptotic observability identity with finite many observation
lattice points. This  identity holds only for some solutions to
Equation \eqref{equ-heat}.
The main result of this section is as follows:

\begin{theorem}\label{thm-asym-ob-balls}
With the notations in  (\ref{equ-basis}) and (\ref{wang1}),
there is a positive constant $C=C(d)$, depending only on $d$, so that
any solution  $u$ to \eqref{equ-heat}, with
$\int_{\mathbb R^d} (1+|x|)^{2} |u(0,x)|^2 \mathrm dx <\infty$,
has  the following properties:
 Given $T>0$, $N>0$ and $r\geq 1$, there is   $R(\cdot)\in L^2(\mathbb R^d)$,
  with
 \begin{eqnarray*}
 \|R(\cdot)\| &\leq&  C\left(1+\left(TN^2\right)^{-\frac{d}{4}} \right) \Big( e^{-TN^2}
 +
      (1+T^{ \frac{d}{2} }) (1+T^{- \frac{1}{2} }) r^{-1}   \Big)
      \nonumber\\
      & & \times   \Big( \int_{\mathbb R^d} (1+|x|)^{2} |u(0,x)|^2 \mathrm dx \Big)^{1/2},
 \end{eqnarray*}
       so that
 \begin{eqnarray*}
 u(T,\cdot) = \sum_{ n\in \mathbb{Z}^d,|n/N|<r } u\big(T,{n}/{N}\big)f_{N,n}(\cdot)+R(\cdot) \;\;\mbox{in}\;\; L^2(\mathbb R^d).
 \end{eqnarray*}
\end{theorem}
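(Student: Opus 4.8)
The strategy is to begin from the global asymptotic observability identity already established in Theorem \ref{thm-decomp}(i) and to show that restricting the lattice sum to the finitely many indices with $|n/N|<r$ costs only an additional error of order $r^{-1}$, the decay of this error being paid for by the weighted integrability hypothesis on $u(0,\cdot)$.

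First I would apply Theorem \ref{thm-decomp}(i): there is $R_0\in L^2(\mathbb R^d)$ with $\|R_0\|\le C(d)\big(1+(TN^2)^{-d/4}\big)e^{-TN^2}\|u(0,\cdot)\|$ and $u(T,\cdot)=\sum_{n\in\mathbb Z^d}u(T,n/N)f_{N,n}+R_0$ in $L^2$. Writing $R:=R_0+\sum_{n\in\mathbb Z^d,\,|n/N|\ge r}u(T,n/N)f_{N,n}$, the asserted identity holds with this $R$, and since $\|u(0,\cdot)\|\le\big(\int_{\mathbb R^d}(1+|x|)^2|u(0,x)|^2\,\mathrm dx\big)^{1/2}$, the contribution of $R_0$ is already of the required form. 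It remains to bound the tail $S:=\big\|\sum_{|n/N|\ge r}u(T,n/N)f_{N,n}\big\|$. By part (iii) of Lemma \ref{lem-orth} this equals $N^{-d/2}\big(\sum_{|n/N|\ge r}|u(T,n/N)|^2\big)^{1/2}$, and since $|n/N|\ge r$ forces $1\le (1+|n/N|)^2/r^2$, one obtains $S\le N^{-d/2}r^{-1}\big(\sum_{n\in\mathbb Z^d}(1+|n/N|)^2|u(T,n/N)|^2\big)^{1/2}$.

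The core of the proof — and the main obstacle — is thus a \emph{weighted} sampling estimate for the values $u(T,n/N)$. Here I would reuse the scaling device from the proof of Lemma \ref{lem-lpq}: setting $v_0(x):=u(0,\sqrt T x)$ one has $u(T,x)=(e^{\triangle}v_0)(x/\sqrt T)$, so that $u(T,n/N)=(e^{\triangle}v_0)(m_n)$ with $m_n:=n/(N\sqrt T)$ running over the lattice of spacing $\rho:=1/(N\sqrt T)$. Using $(1+|n/N|)^2=(1+\sqrt T\,|m_n|)^2\le 2(1+T)\langle m_n\rangle^2$ with $\langle y\rangle:=(1+|y|^2)^{1/2}$, together with $\langle y\rangle|(e^{\triangle}v_0)(y)|=|(\langle\cdot\rangle\,e^{\triangle}v_0)(y)|$ and $|f(\rho n)|\le\|f\|_{C(Q_\rho(\rho n))}$, I would apply Proposition \ref{lem-smooth-l2-r} to $f:=\langle\cdot\rangle\,e^{\triangle}v_0$ with $s=d$ and the radius there taken to be $\rho$. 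This gives $\sum_n(1+|n/N|)^2|u(T,n/N)|^2\le C(d)(1+T)\big(1+(N\sqrt T)^{d/2}\big)^2\|\langle\cdot\rangle\,e^{\triangle}v_0\|_{H^d}^2$; the weighted hypothesis on $u(0,\cdot)$ is exactly what makes $\langle\cdot\rangle\,v_0\in L^2$ and hence keeps the right-hand side finite.

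It then remains to record the fixed-time weighted regularity bound $\|\langle\cdot\rangle\,e^{\triangle}v_0\|_{H^d}\le C(d)\|\langle\cdot\rangle\,v_0\|$, which is elementary: expanding $\partial^\alpha(\langle x\rangle e^{\triangle}v_0)$ by the Leibniz rule, the term in which no derivative falls on $\langle x\rangle$ is handled by $\langle x\rangle\le\sqrt2\,\langle x-y\rangle\langle y\rangle$ and the fact that $z\mapsto\langle z\rangle|\partial^\alpha G_1(z)|$ (with $G_1$ the unit-time heat kernel) lies in $L^1$, so Young's inequality applies; the terms in which at least one derivative falls on $\langle x\rangle$ are easier since $\partial^\beta\langle x\rangle$ is bounded for $|\beta|\ge1$. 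Finally, undoing the scaling gives $\|\langle\cdot\rangle\,v_0\|\le C(d)\,T^{-d/4}(1+T^{-1})^{1/2}\big(\int_{\mathbb R^d}(1+|x|)^2|u(0,x)|^2\,\mathrm dx\big)^{1/2}$, and collecting constants via $N^{-d/2}\big(1+(N\sqrt T)^{d/2}\big)=T^{d/4}\big(1+(TN^2)^{-d/4}\big)$ and the elementary inequality $\sqrt{(1+T)(1+T^{-1})}\le 2(1+T^{d/2})(1+T^{-1/2})$ produces the stated bound for $S$, and hence for $\|R\|\le\|R_0\|+S$.
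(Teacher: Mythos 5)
Your argument is correct and reaches the stated bound, but it takes a genuinely different route from the paper. The paper proves a more general statement ($\mathcal P$, parametrized by $k$), introduces a smooth radial cutoff $\rho$ vanishing on $B_{r/(2\sqrt T)}(0)$ and equal to $1$ outside $B_{r/\sqrt T}(0)$, applies Proposition~\ref{lem-smooth-l2-r} to the localized function $\rho\, e^{\triangle}v_0$, and then estimates $\|\rho\, e^{\triangle}v_0\|_{H^d}$ by restricting all derivative terms to $B^c_{r/(2\sqrt T)}(0)$ and invoking the weighted derivative bound of Lemma~\ref{lem-heat-decreasing-forward} (a Fourier--Leibniz argument); the $r^{-1}$ decay comes from the factor $(1+r/(2\sqrt T))^{-2k}$ picked up on that exterior region. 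You instead insert the weight \emph{discretely}: the elementary inequality $1\le(1+|n/N|)^2/r^2$ on the tail turns the truncation error into a single globally weighted sampling sum, to which you apply Proposition~\ref{lem-smooth-l2-r} with $f=\langle\cdot\rangle e^{\triangle}v_0$ and no cutoff at all, and you replace Lemma~\ref{lem-heat-decreasing-forward} (at $k=1$, $T=1$) by a short physical-space argument: Leibniz on $\langle x\rangle e^{\triangle}v_0$, the Peetre inequality $\langle x\rangle\le\sqrt2\,\langle x-y\rangle\langle y\rangle$, and Young's inequality against the rapidly decaying kernel $\langle\cdot\rangle\partial^\alpha G_1\in L^1$. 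Your version avoids the cutoff and the general-$k$ machinery, which makes it cleaner for the $k=1$ statement actually asserted; the paper's localization is slightly more work here but yields the general weighted statement $(\mathcal P)$ for arbitrary $k$ with explicit constants, which is what they need if one wants the decay rate $r^{-k}$. The bookkeeping you outline (using $N^{-d/2}(1+(N\sqrt T)^{d/2})T^{-d/4}=1+(TN^2)^{-d/4}$ and absorbing $\sqrt{(1+T)(1+T^{-1})}$ into $(1+T^{d/2})(1+T^{-1/2})$) is consistent with the stated bound.
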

To prove Theorem \ref{thm-asym-ob-balls}, we first show the next lemma.
Throughout the rest of this section, we write $C_\alpha^\beta:=C_{\alpha_1}^{\beta_1}\cdots C_{\alpha_d}^{\beta_d}$ and
 $\beta\leq \alpha$, when $\alpha=(\alpha_1,\dots,\alpha_d), \beta=(\beta_1,\dots,\beta_d)\in \mathbb N^d$ satisfy that $\beta_j\leq \alpha_j$ for all $j\in\{1,\dots,d\}$.

\begin{lemma}\label{lem-heat-decreasing-forward}
Let $T>0$, $\alpha\in \mathbb N^d$ and $k\in\mathbb N$. Then any solution  $u$ to   \eqref{equ-heat},
with $\int_{\mathbb R^d} (1+|x|)^{2k} |u(0,x)|^2 \mathrm dx<\infty$,
 satisfies that
\begin{eqnarray}\label{decresing-estimate-for-heat-sep-0}
 \int_{\mathbb R^d} (1+|x|)^{2k} | D_x^\alpha u(T,x)|^2 \mathrm dx
 \leq (2d)^{k+1} \Big( 6^k (|\alpha|+k)! \Big)^2   (1+T)^{k} T^{- |\alpha|  } \int_{\mathbb R^d} (1+|x|)^{2k} |u(0,x)|^2 \mathrm dx.
\end{eqnarray}

\end{lemma}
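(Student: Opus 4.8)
The plan is to work on the Fourier side and exploit the fact that multiplication by $(1+|x|)^{2k}$ becomes, up to combinatorial constants, application of the differential operator $(1-\triangle_\xi)^k$ to $\widehat{D_x^\alpha u(T,\cdot)}(\xi)=(i\xi)^\alpha e^{-T|\xi|^2}\widehat{u_0}(\xi)$. First I would reduce to a pure $\xi$-derivative estimate: since $(1+|x|)^{2k}\le 2^k(1+|x|^{2})^k$ and $(1+|x|^2)^k=(1+|x_1|^2+\dots+|x_d|^2)^k$ expands by the multinomial theorem into a sum of monomials $x^{2\gamma}$ with $|\gamma|\le k$, and since the number of such terms is at most $(d+1)^k\le (2d)^k$ or so, it suffices to bound $\int |x^\gamma D_x^\alpha u(T,x)|^2\,\mathrm dx$ for each $\gamma$ with $|\gamma|\le k$, and then sum. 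By Plancherel this equals $\|D_\xi^\gamma\big((i\xi)^\alpha e^{-T|\xi|^2}\widehat{u_0}(\xi)\big)\|_{L^2}^2$.

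Next I would apply the Leibniz rule to $D_\xi^\gamma$ acting on the triple product $\xi^\alpha\cdot e^{-T|\xi|^2}\cdot\widehat{u_0}(\xi)$, writing $\gamma=\gamma'+\gamma''+\gamma'''$ with multinomial coefficients $C_\gamma^{\gamma'}C_{\gamma-\gamma'}^{\gamma''}$. The derivative $D_\xi^{\gamma'}\xi^\alpha$ is a polynomial of degree $|\alpha|-|\gamma'|$ bounded by $\frac{\alpha!}{(\alpha-\gamma')!}|\xi|^{|\alpha|-|\gamma'|}$ (and vanishes if $\gamma'\not\le\alpha$). The derivative $D_\xi^{\gamma''}e^{-T|\xi|^2}$ is $e^{-T|\xi|^2}$ times a polynomial in $\sqrt T\,\xi$ of degree $|\gamma''|$ with numerical coefficients; combined with the Gaussian it satisfies a bound of the form $|D_\xi^{\gamma''}e^{-T|\xi|^2}|\le c_{|\gamma''|}\,T^{|\gamma''|/2}(1+|\xi|)^{|\gamma''|}e^{-T|\xi|^2/2}$, or more simply I can bound the product of the $\xi^\alpha$-factor and the Gaussian-derivative factor together: $|\xi|^{|\alpha|-|\gamma'|}T^{|\gamma''|/2}e^{-T|\xi|^2/2}\le C\,T^{-(|\alpha|-|\gamma'|)/2}\cdot(\text{bounded})$ using $\sup_{s\ge 0}s^m e^{-s}\le m!$ after the substitution $s=T|\xi|^2/2$. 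The remaining factor $D_\xi^{\gamma'''}\widehat{u_0}(\xi)$ is, again by Plancherel, controlled by $\big(\int (1+|x|)^{2|\gamma'''|}|u_0(x)|^2\,\mathrm dx\big)^{1/2}\le C(k)\big(\int(1+|x|)^{2k}|u_0|^2\big)^{1/2}$ since $|\gamma'''|\le k$. Collecting the powers of $T$: each term carries $T^{-(|\alpha|-|\gamma'|)/2}\cdot T^{?}$ — tracking carefully, the worst (largest) negative power is $T^{-|\alpha|/2}$ from the pure $\gamma'=0$ contribution, and positive powers $T^{(\text{something})/2}$ up to $T^{k/2}$ arise when $\sqrt T\,\xi$ factors are not absorbed against the polynomial; bounding $\xi$-powers against the Gaussian always costs at most an extra $(1+T)^{k/2}$. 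Squaring gives the $(1+T)^k T^{-|\alpha|}$ of \eqref{decresing-estimate-for-heat-sep-0}.

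The bookkeeping of constants is the main obstacle — not conceptually hard, but one must be careful that the product of multinomial coefficients from the two nested Leibniz expansions, the factorials $\frac{\alpha!}{(\alpha-\gamma')!}$ from differentiating $\xi^\alpha$, the constants $c_{|\gamma''|}$ from the Hermite-type polynomials, and the $m!$ from $\sup s^m e^{-s}$, all combine into something dominated by $\big(6^k(|\alpha|+k)!\big)^2$, and that the sum over $\gamma,\gamma',\gamma'',\gamma'''$ and over the multinomial expansion of $(1+|x|^2)^k$ contributes only the factor $(2d)^{k+1}$. I would handle this by crude term-counting: the number of $(\gamma',\gamma'',\gamma''')$ decompositions of a multi-index $\gamma$ with $|\gamma|\le k$ is at most $3^{kd}$-ish but more usefully $\le C(k,d)$, and every individual constant is $\le (|\alpha|+k)!\,6^{|\alpha|+k}$ type; then verify the clean exponents $k+1$ on $2d$ and the square on $6^k(|\alpha|+k)!$ match by choosing the estimates generously. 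Once \eqref{decresing-estimate-for-heat-sep-0} is established for a single $\gamma$, summing over the $\le (2d)^k$ monomials of $(1+|x|^2)^k$ and using $(1+|x|)^{2k}\le 2^k(1+|x|^2)^k$ on the left absorbs the last factor of $2d$, completing the proof.
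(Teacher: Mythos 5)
Your strategy is correct and closely parallels the paper's: pass to the Fourier side via Plancherel, trade the weight $(1+|x|)^{2k}$ for $\xi$-derivatives of $\xi^\alpha e^{-T|\xi|^2}\widehat{u_0}(\xi)$, expand by Leibniz, and absorb polynomial growth against the Gaussian. The one genuine technical difference is that the paper first rescales $\xi=\eta/\sqrt{T}$, which pulls the explicit $T$-factor $T^{|\beta|-|\alpha|+d/2}$ out in front and normalizes the Gaussian to $e^{-|\eta|^2}$; it then applies a \emph{two}-factor Leibniz rule, grouping $\eta^\alpha e^{-|\eta|^2}$ as a single block and using the self-contained one-dimensional bound $\sup_{s\in\mathbb R}|D_s^m(s^n e^{-s^2})|\leq (m+n)!\,3^m$ (and its product over coordinates). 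This makes the $(1+T)^{|\beta|}T^{-|\alpha|}$ factor drop out immediately from $T^{|\beta|-|\gamma|}\leq(1+T)^{|\beta|}$, with all the combinatorics confined to the clean Cramer bound $\sum_{\gamma\leq\beta}|C_\beta^\gamma x^\gamma|^2\leq 2^{|\beta|}(1+|x|)^{2|\beta|}$. Your three-factor Leibniz without rescaling produces the same shape of estimate, but the $T$-bookkeeping is entangled with the Hermite-type constants $c_{|\gamma''|}$, so carrying the argument out in full and landing exactly on $(2d)^{k+1}\big(6^k(|\alpha|+k)!\big)^2$ would take more work than you've sketched; if you only care about \emph{some} admissible constant of the form $C(d,k,|\alpha|)$, your route is perfectly fine, but if you want the stated constant cheaply, the rescale-then-Leibniz order is the device worth adopting.
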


\begin{proof}
 Arbitrarily fix $T>0$, $\alpha\in \mathbb N^d$ and $k\in\mathbb N$. Then arbitrarily fix a solution $u$ to \eqref{equ-heat} so that $\int_{\mathbb R^d} (1+|x|)^{2k} |u(0,x)|^2 \mathrm dx<\infty$.
 We claim that for each $\beta\in\mathbb N^d$,
 \begin{eqnarray}\label{decresing-estimate-for-heat-sep-3}
 \int_{\mathbb R^d_x}  | x^\beta D_x^\alpha u(T,x)|^2 \mathrm dx
 \leq  2^{|\beta|+1} \Big( 3^{|\beta|} (|\alpha|+|\beta|)! \Big)^2 (1+T)^{|\beta|}  T^{-|\alpha|}
 \int_{\mathbb R^d_x}    (1+|x|)^{2|\beta|} | u(0,x) |^2 \mathrm dx.
\end{eqnarray}
To this end, we arbitrarily fix $
\beta=(\beta_1,\dots,\beta_d)\in \mathbb N^d$.
 By the Parseval-Plancherel formula, we obtain that
\begin{eqnarray*}
  \int_{\mathbb R^d_x}  | x^\beta D_x^\alpha u(T,x)|^2 \mathrm dx
  &=& \int_{\mathbb R^d_\xi}  | D_\xi^\beta \big( \xi^\alpha \widehat u(T,\xi) \big)|^2 \mathrm d\xi
  = \int_{\mathbb R^d_\xi}  | D_\xi^\beta \big( \xi^\alpha e^{-T|\xi|^2} \widehat u(0,\xi) \big)|^2 \mathrm d\xi
  \nonumber\\
  &=& T^{|\beta|-|\alpha|} T^{\frac{d}{2}} \int_{\mathbb R^d_\eta}  | D_\eta^\beta \big( \eta^\alpha e^{-|\eta|^2} \widehat u(0,\eta/\sqrt{T}) \big)|^2 \mathrm d\eta
  \nonumber\\
  &=& T^{|\beta|-|\alpha|} T^{\frac{d}{2}} \int_{\mathbb R^d_\eta}  \Big| \sum_{\gamma\in \mathbb N^d,\gamma \leq \beta } C_\beta^\gamma D_\eta^{\beta-\gamma} \big( \eta^\alpha e^{-|\eta|^2} \big) D_\eta^\gamma \widehat u(0,\eta/\sqrt{T})  \Big|^2 \mathrm d\eta.
\end{eqnarray*}
This yields that
\begin{eqnarray}\label{decresing-estimate-for-heat-sep-1}
 \int_{\mathbb R^d_x}  | x^\beta D_x^\alpha u(T,x)|^2 \mathrm dx
  &\leq&  T^{|\beta|-|\alpha|+\frac{d}{2}} \times
  \\
  & &
    2\sum_{ \gamma\in \mathbb N^d,\gamma \leq \beta }  \Big[  \Big(
   \sup_{\eta\in \mathbb R^d}  \big| D_\eta^{\beta-\gamma} \big( \eta^\alpha e^{-|\eta|^2} \big) \big| \Big)^2
  \int_{\mathbb R^d_\eta}  | C_\beta^\gamma D_\eta^\gamma  \widehat u(0,\eta/\sqrt{T}) |^2 \mathrm d\eta
  \Big].
  \nonumber
\end{eqnarray}
Meanwhile, by using induction, one can easily obtain  that
\begin{eqnarray*}
  \sup_{s\in \mathbb R} \big| D_s^{m} \big( s^n e^{-s^2} \big) \big|
  \leq (m+n)! 3^m
  \;\;\mbox{for each}\;\;
  m,n\in\mathbb N,
\end{eqnarray*}
from which, it follows that for each $\gamma=(\gamma_1,\dots,\gamma_d)\in \mathbb N^d$ with $\gamma\leq \beta$,
\begin{eqnarray*}
 \sup_{\eta\in \mathbb R^d} \big| D_\eta^{\beta-\gamma} \big( \eta^\alpha e^{-|\eta|^2} \big) \big|
 &=& \prod_{j=1}^d \sup_{\eta_j\in \mathbb R} \big| D_{\eta_j}^{\beta_j-\gamma_j} \big( \eta_j^{\alpha_j} e^{-\eta_j^2} \big) \big|
 \nonumber\\
 &\leq& \prod_{j=1}^d \Big[ (\alpha_j+\beta_j-\gamma_j)! 3^{\beta_j-\gamma_j} \Big]
 \leq (|\alpha|+|\beta|)! 3^{|\beta|}.
\end{eqnarray*}
This, together with (\ref{decresing-estimate-for-heat-sep-1}), indicates that
\begin{eqnarray*}
 \int_{\mathbb R^d_x}  | x^\beta D_x^\alpha u(T,x)|^2 \mathrm dx
  &\leq&  2\Big( 3^{|\beta|} (|\alpha|+|\beta|)! \Big)^2  T^{|\beta|-|\alpha|} \times
  \\
  & &
    \sum_{ \gamma\in \mathbb N^d,\gamma \leq \beta }
  \int_{\mathbb R^d_\eta} T^{-|\gamma|} | C_\beta^\gamma  D_\eta^\gamma  \widehat u(0,\eta) |^2 \mathrm d\eta.
  \nonumber
\end{eqnarray*}
Making use of the Parseval-Plancherel formula to the above, we deduce that
\begin{eqnarray}\label{decresing-estimate-for-heat-sep-2}
 & & \int_{\mathbb R^d_x}  | x^\beta D_x^\alpha u(T,x)|^2 \mathrm dx
 \nonumber\\
  &\leq&   2\Big( 3^{|\beta|} (|\alpha|+|\beta|)! \Big)^2 (1+T)^{|\beta|}  T^{-|\alpha|}
    \sum_{ \gamma\in \mathbb N^d,\gamma \leq \beta }
  \int_{\mathbb R^d_x}    | C_\beta^\gamma x^\gamma  u(0,x) |^2 \mathrm dx.
\end{eqnarray}
Since
\begin{eqnarray*}
   \sum_{ \gamma\in \mathbb N^d,\gamma \leq \beta }
   | C_\beta^\gamma x^\gamma|^2
   &\leq&   2^{|\beta|}   \sum_{ \gamma\in \mathbb N^d,\gamma \leq \beta }
    C_\beta^\gamma x_1^{2\gamma_1}\cdots x_d^{2\gamma_d}
   \nonumber\\
   &=& 2^{|\beta|} \prod_{j=1}^d \sum_{0\leq \gamma_j \leq \beta_j} C_{\beta_j}^{\gamma_j} x_j^{2\gamma_j}
   = 2^{|\beta|} \prod_{j=1}^d  (1+x_j^2)^{\gamma_j}
   \nonumber\\
   &\leq& 2^{|\beta|}(1+x_1^2+\cdots+x_d^2)^{|\beta|}
   \leq 2^{|\beta|}(1+|x|)^{2|\beta|}\;\;\mbox{for each}\;\;x=(x_1,\dots,x_d)\in\mathbb R^d,
\end{eqnarray*}
we get (\ref{decresing-estimate-for-heat-sep-3}) from  (\ref{decresing-estimate-for-heat-sep-2}) immediately.

Finally, since
\begin{eqnarray*}
 (1+|x|)^{2k}
 &\leq& \max_{1\leq j \leq d} \Big(1+\sqrt{d}|x_l| \Big)^{2k}
 \leq d^k\sum_{l=1}^d (1+|x_l|)^{2k}
 \nonumber\\
 &\leq& (2d)^k \sum_{l=1}^d (1+x_l^2)^{k}
 = (2d)^k \sum_{l=1}^d \sum_{j=0}^k C_k^j x_l^{2j}\;\;\mbox{for each}\;\;x=(x_1,\dots,x_d)\in\mathbb R^d,
\end{eqnarray*}
it follows from  (\ref{decresing-estimate-for-heat-sep-3}) that
\begin{eqnarray*}
& & \int_{\mathbb R^d} (1+|x|)^{2k} | D_x^\alpha u(T,x)|^2 \mathrm dx
\leq (2d)^k \sum_{l=1}^d \sum_{j=0}^k C_k^j  \int_{\mathbb R^d}    | x_l^{j} D_x^\alpha u(T,x)|^2 \mathrm dx
\nonumber\\
&\leq& (2d)^k 2^{k+1} \sum_{l=1}^d \sum_{j=0}^k C_k^j  \Big( 3^j (|\alpha|+j)! \Big)^2 (1+T)^{j}  T^{-|\alpha|}
 \int_{\mathbb R^d_x}    (1+|x|)^{2j} | u(0,x) |^2 \mathrm dx
 \nonumber\\
 &\leq& (2d)^k  2^{k+1} d 2^k \Big( 3^k (|\alpha|+k)! \Big)^2 (1+T)^{k}  T^{-|\alpha|}
 \int_{\mathbb R^d_x}    (1+|x|)^{2k} | u(0,x) |^2 \mathrm dx,
\end{eqnarray*}
which leads to the desired result. This ends the proof of Lemma \ref{lem-heat-decreasing-forward}.
\end{proof}

We are in the position to prove Theorem \ref{thm-asym-ob-balls}.

\begin{proof}[Proof of Theorem \ref{thm-asym-ob-balls}]
 We claim the following statement:
\begin{itemize}
  \item[($\mathcal P$)] There is $C(d)>0$ so that for
 any $T>0$, $N>0$, $k\in\mathbb N$, $r\geq 1$ and any solution
    $u$ to \eqref{equ-heat}, with
$\int_{\mathbb R^d} (1+|x|)^{2k} |u(0,x)|^2 \mathrm dx <\infty$,
    there is $R(\cdot)\in L^2(\mathbb R^d)$,
  with the estimate:
 \begin{eqnarray*}
 \|R(\cdot)\| &\leq&  C(d)\left(1+\left(TN^2\right)^{-\frac{d}{4}} \right) \Big( e^{-TN^2}
 +
    d^{\frac{k}{2}} 12^k (d+k)!  (1+r^{-d}T^{\frac{d}{2}}) (1+T^{-\frac{k}{2}} ) (1+r)^{-k}   \Big)
      \nonumber\\
      & & \times   \Big( \int_{\mathbb R^d} (1+|x|)^{2k} |u(0,x)|^2 \mathrm dx \Big)^{ \frac{1}{2} },
 \end{eqnarray*}
       so that
 \begin{eqnarray*}
 u(T,\cdot) = \sum_{ n\in \mathbb{Z}^d,|n/N|<r } u\big(T,{n}/{N}\big)f_{N,n}(\cdot)+R(\cdot) \;\;\mbox{in}\;\; L^2(\mathbb R^d).
 \end{eqnarray*}
\end{itemize}
It is clear  that the desired conclusion in Theorem \ref{thm-asym-ob-balls}
is exactly the conclusion ($\mathcal P$) where $k=1$.

To prove
 ($\mathcal P$), we arbitrarily fix required $T$, $N$, $k$, $r$
 and  $u$ as required.  Set
\begin{eqnarray}\label{sep-heat-finite-estimate-3}
 v_0(x):=u\left(0,\sqrt{T}x\right)\;\;\mbox{for a.e.}\;\;x\in\mathbb R^d.
\end{eqnarray}
It is clear that
\begin{eqnarray}\label{sep-heat-finite-estimate-4}
u(T,x) = \left(e^{\triangle}v_0\right)\big({x}/{\sqrt{T}}\big)\;\;
\mbox{for each}\;\;x\in \mathbb{R}^d.
\end{eqnarray}

\vskip 5pt
\noindent
\textit{Step 1. We prove that for some $C^\prime(d)>0$,
\begin{eqnarray}\label{sep-heat-finite-estimate-5}
 \Big\|  \sum_{n\in\mathbb Z^d,|n/N|>r} (e^{\triangle}v_0)\Big( \frac{n}{N\sqrt{T}} \Big) f_{N\sqrt{T},n}  \Big\|
 &\leq&  C^\prime(d)
    d^{k/2} 12^k (d+k)! (1+r^{-d}T^{d/2}) (1+r)^{-k}  (1+T^{-k/2})
      \nonumber\\
      & & \times  T^{-d/4}  \Big( \int_{\mathbb R^d} (1+|x|)^{2k} |u(0,x)|^2 \mathrm dx \Big)^{1/2}.
\end{eqnarray}}
  By (iii) of Lemma \ref{lem-orth}, it follows that
\begin{eqnarray}\label{sep-heat-finite-estimate-6}
 \Big\|\sum_{n\in\mathbb Z^d,|n/N|>r} (e^{\triangle}v_0) \Big( \frac{n}{N\sqrt{T}} \Big) f_{N,n}\Big\|
 = (TN^2)^{-d/4} \Big(
\sum_{n\in\mathbb Z^d,|n/N|>r} \Big|(e^{\triangle}v_0)\Big( \frac{n}{N\sqrt{T}} \Big) \Big|^2 \Big)^{1/2}.
\end{eqnarray}
Take $\rho\in C_0^\infty(\mathbb R^d)$ so that
\begin{itemize}
  \item[(i)] $\rho(x)=1$, when $|x|>\frac{r}{\sqrt{T}}$, \;\;\;  $\rho(x)=0$,
      when $|x|\leq \frac{r}{2\sqrt{T}}$;
  \item[(ii)] there exists
  $C_1(d)>0$ so that  for each $\alpha\in \mathbb N^d$ with $|\alpha|\leq d$,
  \begin{eqnarray*}
  \sup_{x\in\mathbb R^d} |D^\alpha \rho(x)| \leq  C_1(d) \Big(\frac{r}{\sqrt{T}} \Big)^{-|\alpha|}.
  \end{eqnarray*}
\end{itemize}
Then we can apply Proposition \ref{lem-smooth-l2-r} (where $s=d$, $f(\cdot)=\rho(\cdot) u(T,\cdot)$ and $r=1/(N\sqrt{T})$), as well as (\ref{sep-heat-finite-estimate-6}),  to find  $C(d)>0$ so that
\begin{eqnarray}\label{sep-heat-finite-estimate-2}
\Big\|\sum_{n\in\mathbb Z^d,|n/N|>r} (e^{\triangle}v_0) \Big( \frac{n}{N\sqrt{T}} \Big) f_{N,n}\Big\|
&=& \Big(\sum_{n\in\mathbb Z^d,|n/N|>r} |\rho(n/N) (e^{\triangle}v_0) \Big( \frac{n}{N\sqrt{T}} \Big)|^2 \Big)^{1/2}
\nonumber\\
&\leq&  C(d) (1+(TN^2)^{-d/4}) \|\rho e^{\triangle}v_0 \|_{H^{d}(\mathbb R^d)}.
\end{eqnarray}

 We now estimate the term $\|\rho e^{\triangle}v_0 \|_{H^{d}(\mathbb R^d)}$.
 First, according to  the properties of $\rho$, there is  $C_2(d)>0$ so that
\begin{eqnarray*}
 & &  \|\rho e^{\triangle}v_0 \|_{H^d(\mathbb R^d)}
  \leq C_2(d) \Big( \|\rho e^{\triangle}v_0 \|
  +  \sum_{\alpha\in \mathbb N^d,|\alpha|=d} \|D_x^\alpha(\rho e^{\triangle}v_0)\|\Big)
  \nonumber\\
  &\leq&  C_2(d) \Big( \|\rho e^{\triangle}v_0 \|
  +  \sum_{\alpha\in \mathbb N^d,|\alpha|=d}
  \sum_{\beta\in\mathbb N^d,\beta\leq \alpha} C_\alpha^\beta \|D^{\alpha-\beta}_x \rho  D^{\beta}_x e^{\triangle}v_0\| \Big)
  \nonumber\\
  &\leq& C_1(d)C_2(d)  \Big( \|e^{\triangle}v_0 \|_{L^2(B^c_{r/(2\sqrt{T})}(0))}
  +  \sum_{\alpha\in \mathbb N^d,|\alpha|=d}
  \sum_{\beta\in\mathbb N^d,\beta\leq \alpha} C_\alpha^\beta \Big( \frac{r}{2\sqrt{T}} \Big)^{-|\alpha|+|\beta|} \|D^{\beta}_x e^{\triangle}v_0\|_{L^2(B^c_{r/(2\sqrt{T})}(0))} \Big),
\end{eqnarray*}
from which, it follows  that for some $C_3(d)>0$,
\begin{eqnarray}\label{sep-heat-finite-estimate-1}
  \|\rho e^{\triangle}v_0 \|_{H^d(\mathbb R^d)}
  \leq C_3(d) \Big( 1+ \Big(\frac{r}{\sqrt{T}}\Big)^{-d} \Big)
  \sum_{\beta\in\mathbb N^d,|\beta|\leq d}  \|D^{\beta}_x e^{\triangle}v_0\|_{L^2(B^c_{r/(2\sqrt{T})}(0))}.
\end{eqnarray}
Second, we can use Lemma \ref{lem-heat-decreasing-forward} (where $T=1$ and $u(0,\cdot)=v_0(\cdot)$) to see that for each $\beta\in\mathbb N^d$ with $|\beta|\leq d$,
\begin{eqnarray}\label{gwang3.181012}
 \|D^{\beta}_x e^{\triangle}v_0\|_{L^2(B^c_{r/(2\sqrt{T})}(0))}^2
 &=& \int_{|x|\geq r/(2\sqrt{T})}  | D^{\beta}_x e^{\triangle}v_0|^2 \mathrm dx
 \nonumber\\
 &\leq&
 (1+r/(2\sqrt{T}))^{-2k} \int_{\mathbb R^d} (1+|x|)^{2k} | D^{\beta}_x e^{\triangle}v_0(x)|^2 \mathrm dx\\
 &\leq& (2d)^{k+1} \Big( 6^k (d+k)! \Big)^2 (1+r/(2\sqrt{T}))^{-2k}  2^{k}    \int_{\mathbb R^d} (1+|x|)^{2k} |v_0(x)|^2 \mathrm dx.\nonumber
\end{eqnarray}
From (\ref{sep-heat-finite-estimate-1})
and \eqref{gwang3.181012}, we see that  for some $C_4(d)>0$,
\begin{eqnarray*}
  \|\rho e^{\Delta} v_0 \|_{H^d(\mathbb R^d)}
  \leq C_4(d)
    d^{k/2} 12^k (d+k)! (1+r^{-d}T^{d/2}) (1+rT^{-1/2})^{-k}     \Big( \int_{\mathbb R^d} (1+|x|)^{2k} |v_0(x)|^2 \mathrm dx \Big)^{1/2}.
\end{eqnarray*}
This, together with  (\ref{sep-heat-finite-estimate-2}) and (\ref{sep-heat-finite-estimate-3}), leads to (\ref{sep-heat-finite-estimate-5}).

\vskip 5pt
\noindent
\textit{Step 2. We  prove the conclusion ($\mathcal P$).}

\noindent It follows from (\ref{sep-heat-finite-estimate-4}), (\ref{Wang9122.59}) and  (\ref{sep-heat-finite-estimate-5}) that
\begin{eqnarray*}\label{sep-heat-finite-estimate-10}
 \Big\|   \sum_{n\in\mathbb Z^d,|n/N|>r} u(T, n/N) f_{N,n}  \Big\|
 &\leq&  C^\prime(d)
    d^{k/2} 12^k (d+k)! (1+r^{-d}T^{d/2}) (1+r)^{-k}  (1+T^{-k/2})
      \nonumber\\
      & & \times   \Big( \int_{\mathbb R^d} (1+|x|)^{2k} |u(0,x)|^2 \mathrm dx \Big)^{1/2}.
\end{eqnarray*}
This, along with  Theorem \ref{thm-decomp}, leads to  ($\mathcal P$). Thus we  end the proof of Theorem \ref{thm-asym-ob-balls}.
\end{proof}

\section{Application to  controllability}

 In this  section, we will  apply Theorem \ref{thm-decomp}, as well as Theorem \ref{thm-asym-ob-balls},
  to build up some kind of feedback null approximate controllability for some impulsively controlled heat equations.
  We start with introducing the  controlled system for the application of Theorem \ref{thm-decomp}.
  Throughout this section, we arbitrarily fix $T>\tau>0$.
  For each $N>0$, we define a control operator $B_N$ from $l^2(\mathbb Z^d)$ to $\mathcal{D}^\prime(\mathbb R^d)$ by setting
 \begin{eqnarray}\label{def-control-operator-N}
  B_Nv := \sum_{n\in \mathbb Z^d} v_n \delta_{N,n}
  \;\;\mbox{for each}\;\;
  v=\{v_n\}_{n\in \mathbb Z^d} \in l^2(\mathbb Z^d),
 \end{eqnarray}
 where $\delta_{N,n}(x):=\delta(x-\frac{n}{N})$, $x\in \mathbb{R}^d$.
   Consider the following control system:
 \begin{eqnarray}\label{heat-impulsive-control-system-N}
  \left\{
   \begin{array}{lll}
     \partial_t y  - \Delta y =0  &\mbox{in}  & \big((0,T)\setminus\{\tau\} \big) \times \mathbb R^d,\\
     y|_{t=\tau}=y|_{t=\tau-} + B_N v   &\mbox{in}  &\mathbb R^d,\\
     y|_{t=0}=y_0  &\mbox{in}  &\mathbb R^d,
   \end{array}
  \right.
 \end{eqnarray}
 where $N>0$, $y_0\in L^2(\mathbb R^d)$, $v\in l^2(\mathbb Z^d)$ is a control
 and $y|_{t=\tau-}$ denotes the left limit of  $y$ (which is treated as a function  from $\mathbb{R}^+$ to $\mathbb{R}^d$) at time $\tau$.
   Notice that in the system (\ref{heat-impulsive-control-system-N}), controls are added impulsively in both time and space.
   It will be seen  in Lemma \ref{lemma973.1} that for each $y_0\in L^2(\mathbb R^d)$, each $v\in l^2(\mathbb Z^d)$ and each $N>0$,  the equation (\ref{heat-impulsive-control-system-N}) has a unique solution in some space.
        Write $y(\cdot; N,y_0,v)$  for this  solution.

 \begin{lemma}\label{lemma973.1}
  Let  $N>0$ and $s>\frac{d}{2}$. The following two conclusions are true:

  (i) The control operator $B_N$ is linear and bounded from $ l^2(\mathbb Z^d)$ to $H^{-s}(\mathbb R^d)$.

  (ii) If $y_0\in L^2(\mathbb R^d)$ and $v\in l^2(\mathbb Z^d)$, then
  the unique solution to
  \eqref{heat-impulsive-control-system-N} satisfies
  $$
  y(\cdot; N,y_0,v)\in C\big([0,\tau)\cup(\tau,T];L^2(\mathbb R^d)\big)
  \;\;\mbox{and}\;\;y(\cdot; N,y_0,v)|_{[\tau,T]}\in C\big([\tau,T]; H^{-s}(\mathbb R^d)\big).
  $$
  Furthermore,  $y(t; N,y_0,v)|_{t=\tau-}$  exists in $L^2(\mathbb R^d)$.
 \end{lemma}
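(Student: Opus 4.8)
The plan is to establish the two conclusions in turn, conclusion (i) being exactly what makes the post-impulse evolution in (ii) well posed. For (i), I would use the identification $H^{-s}(\mathbb R^d)=\big(H^s(\mathbb R^d)\big)^\prime$. For $v=\{v_n\}_{n\in\mathbb Z^d}\in l^2(\mathbb Z^d)$ and $\varphi\in H^s(\mathbb R^d)$, the pairing $\langle B_Nv,\varphi\rangle=\sum_{n\in\mathbb Z^d}v_n\varphi(n/N)$ is well defined because $s>d/2$ gives the embedding $H^s(\mathbb R^d)\hookrightarrow C(\mathbb R^d)\cap L^\infty(\mathbb R^d)$, so the point values $\varphi(n/N)$ make sense. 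Since $n/N\in Q_{1/N}(n/N)$ we have $|\varphi(n/N)|\le\|\varphi\|_{C(Q_{1/N}(n/N))}$, and hence by the Cauchy--Schwarz inequality in $l^2(\mathbb Z^d)$ followed by Proposition \ref{lem-smooth-l2-r} applied with $r=1/N$, one gets $|\langle B_Nv,\varphi\rangle|\le\|v\|_{l^2(\mathbb Z^d)}\,\big\|\{\|\varphi\|_{C(Q_{1/N}(n/N))}\}_{n\in\mathbb Z^d}\big\|_{l^2(\mathbb Z^d)}\le C(s,d)\big(1+N^{d/2}\big)\|v\|_{l^2(\mathbb Z^d)}\,\|\varphi\|_{H^s(\mathbb R^d)}$. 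This shows $B_Nv\in H^{-s}(\mathbb R^d)$ with $\|B_Nv\|_{H^{-s}(\mathbb R^d)}\le C(s,d)\big(1+N^{d/2}\big)\|v\|_{l^2(\mathbb Z^d)}$, and linearity in $v$ is immediate from \eqref{def-control-operator-N}.

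For (ii), I would produce the solution explicitly through the heat semigroup, setting $y(t;N,y_0,v):=e^{t\Delta}y_0$ for $t\in[0,\tau)$ and $y(t;N,y_0,v):=e^{t\Delta}y_0+e^{(t-\tau)\Delta}B_Nv$ for $t\in[\tau,T]$. A direct check shows this function solves $\partial_ty-\Delta y=0$ on $(0,\tau)$ and on $(\tau,T)$, satisfies $y|_{t=0}=y_0$, and realizes the jump condition $y|_{t=\tau}=e^{\tau\Delta}y_0+B_Nv=y|_{t=\tau-}+B_Nv$ with $y|_{t=\tau-}=e^{\tau\Delta}y_0\in L^2(\mathbb R^d)$. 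Uniqueness follows because any solution must coincide with $e^{t\Delta}y_0$ on $[0,\tau)$ --- hence has left limit $e^{\tau\Delta}y_0$ at $\tau$ --- and on $[\tau,T]$ is then forced by the jump relation together with well-posedness of the forward heat equation with datum $e^{\tau\Delta}y_0+B_Nv\in H^{-s}(\mathbb R^d)$.

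It remains to read off the three regularity assertions. On $[0,\tau)$, $t\mapsto e^{t\Delta}y_0$ lies in $C([0,\tau];L^2(\mathbb R^d))$ since $\{e^{t\Delta}\}_{t\ge0}$ is a $C_0$-semigroup on $L^2(\mathbb R^d)$, which in particular gives the left limit at $\tau$ in $L^2(\mathbb R^d)$. On $[\tau,T]$, the term $e^{t\Delta}y_0$ lies in $C([\tau,T];L^2(\mathbb R^d))\subset C([\tau,T];H^{-s}(\mathbb R^d))$, while $t\mapsto e^{(t-\tau)\Delta}B_Nv$ lies in $C([\tau,T];H^{-s}(\mathbb R^d))$ because $B_Nv\in H^{-s}(\mathbb R^d)$ by part (i) and $\{e^{t\Delta}\}_{t\ge0}$ is also a $C_0$-semigroup on $H^{-s}(\mathbb R^d)$ (on the Fourier side it is multiplication by $e^{-t|\xi|^2}$, with strong continuity following by dominated convergence); the sum gives $y|_{[\tau,T]}\in C([\tau,T];H^{-s}(\mathbb R^d))$. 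Finally, for $t\in(\tau,T]$ the parabolic smoothing, read off from writing $e^{-(t-\tau)|\xi|^2}\widehat{B_Nv}=\big(e^{-(t-\tau)|\xi|^2}(1+|\xi|^2)^{s/2}\big)\big((1+|\xi|^2)^{-s/2}\widehat{B_Nv}\big)$ with the first factor bounded for $t>\tau$ and the second in $L^2(\mathbb R^d)$, shows $e^{(t-\tau)\Delta}B_Nv\in L^2(\mathbb R^d)$ and that $t\mapsto e^{(t-\tau)\Delta}B_Nv$ is continuous from $(\tau,T]$ into $L^2(\mathbb R^d)$; combined with $e^{t\Delta}y_0\in C([\tau,T];L^2(\mathbb R^d))$ this yields $y\in C([0,\tau)\cup(\tau,T];L^2(\mathbb R^d))$.

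The main obstacle is not any single estimate but identifying the correct solution concept at the impulse time $\tau$: the solution genuinely leaves $L^2(\mathbb R^d)$ at $t=\tau$ (its $L^2$-norm blows up as $t\to\tau^+$ whenever $B_Nv\notin L^2(\mathbb R^d)$), so continuity can be asked only in $H^{-s}(\mathbb R^d)$ across $\tau$ and must be upgraded to $L^2(\mathbb R^d)$ separately on $[0,\tau)$ and on $(\tau,T]$ by smoothing. Getting this dichotomy and the matching of spaces at $\tau$ exactly right --- and checking that part (i) furnishes precisely the $H^{-s}$-regularity of $B_Nv$ needed to restart the evolution --- is where the argument needs care; the remaining computations are routine semigroup theory.
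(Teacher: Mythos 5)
Your proposal is correct and follows essentially the same route as the paper: part (i) via the $H^{-s}$--$H^{s}$ duality pairing, Cauchy--Schwarz, and Proposition \ref{lem-smooth-l2-r} with $r=1/N$; part (ii) via the explicit mild-solution formula, viewing $\{e^{t\Delta}\}_{t\ge 0}$ as a $C_0$-semigroup on $H^{-s}(\mathbb R^d)$ and invoking parabolic smoothing for the $L^2$ regularity on $(\tau,T]$. The extra detail you supply on uniqueness and on the Fourier-side smoothing mechanism is consistent with, and merely expands, the paper's argument.
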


 \begin{proof}
  Arbitrarily fix  $N>0$ and $s>\frac{d}{2}$. The conclusion (i)-(ii) will be proved one by one.

  \vskip 5pt
  (i) Arbitrarily take $v=\{v_n\}_{n\in \mathbb Z^d} \in l^2(\mathbb Z^d)$.
  Three facts are given in order. Fact One: We have that
  \begin{eqnarray}\label{wang973.3}
  \| B_N v \|_{H^{-s}(\mathbb R^d)} &=& \sup_{\psi\in C_0^\infty(\mathbb R^d),\|\psi\|_{H^s(\mathbb R^d)}\leq 1}
     \langle B_N v, \psi \rangle_{H^{-s}(\mathbb R^d),H^s(\mathbb R^d)}
     \nonumber\\
     &=& \sup_{\psi\in C_0^\infty(\mathbb R^d),\|\psi\|_{H^s(\mathbb R^d)} \leq 1} \langle B_N v, \psi \rangle_{\mathcal{D}^\prime(\mathbb R^d),C_0^\infty(\mathbb R^d)}.
       \end{eqnarray}
  Fact Two: It follows from  (\ref{def-control-operator-N}) that for each $\psi\in C_0^\infty(\mathbb R^d)$,
  \begin{eqnarray}\label{wang973.4}
   \langle B_N v, \psi \rangle_{ \mathcal{D}^\prime(\mathbb R^d), C_0^\infty(\mathbb R^d) }
   = \sum_{n\in \mathbb Z^d} v_n \psi({n}/{N})
   \leq \Big( \sum_{ n\in\mathbb Z^d } v_n^2 \Big)^{\frac{1}{2}}
   \Big( \sum_{ n\in\mathbb Z^d } \psi\big({n}/{N} \big)^2 \Big)^{\frac{1}{2}}.
  \end{eqnarray}
  Fact Three: For each $\psi\in C_0^\infty(\mathbb R^d)$, we can apply Proposition \ref{lem-smooth-l2-r}, where $r=\frac{1}{N}$ and $f=\psi$, to
  find $C(s,d)>0$ so that
  \begin{eqnarray}\label{wang973.5}
  \Big( \sum_{ n\in\mathbb Z^d } \psi\big({n}/{N} \big)^2 \Big)^{\frac{1}{2}}
  \leq C(s,d)\left(1+N^{\frac{d}{2}}\right)\|\psi\|_{H^s(\mathbb{R}^d)}.
  \end{eqnarray}

  Now, from \eqref{wang973.3}, \eqref{wang973.4} and \eqref{wang973.5}, it follows that
    \begin{eqnarray*}
    \| B_N v \|_{H^{-s}(\mathbb R^d)}
     \leq  C(s,d) \left(1+N^{\frac{d}{2}}\right) \|v\|_{l^2(\mathbb Z^d)},
  \end{eqnarray*}
  which  leads to the conclusion (i).

  \vskip 5pt
  (ii) Arbitrarily fix $y_0\in L^2(\mathbb R^d)$ and $v\in l^2(\mathbb Z^d)$. It is clear that the unique solution
  to \eqref{heat-impulsive-control-system-N} can be expressed as
  \begin{eqnarray}\label{wang973.6}
    y(t; N,y_0,v)
    =\left\{
     \begin{array}{ll}
     e^{t\Delta}y_0,~&0\leq t<\tau,\\
     e^{t\Delta}y_0 + (e^{(t-\tau)\Delta} B_Nv),~&\tau\leq t\leq T.
     \end{array}
     \right.
  \end{eqnarray}
 Several facts are stressed: First, by the conclusion (i) of this lemma, $B_Nv\in H^{-s}(\mathbb{R}^d)$; Second,  in the second line on the right hand side of \eqref{wang973.6},  $\{e^{t\Delta}\}_{t\geq 0}$ is treated as a semigroup on $H^{-s}(\mathbb{R}^d)$; Third, by  the smooth effect of the semigroup, we have that $e^{(\cdot-\tau)\Delta} B_Nv\in C((\tau,T]; L^2(\mathbb{R}^d))$.
  From these facts and \eqref{wang973.6}, we can easily obtain the desired results.

    Hence, we end the proof of Lemma \ref{lemma973.1}.

 \end{proof}

  Now we define, for each $N>0$, a feedback law  $K_N: L^2(\mathbb R^d)\rightarrow l^2(\mathbb Z^d)$ by
 \begin{eqnarray}\label{def-feedback-N}
  K_N g \triangleq  \big\{ \langle g, -f_{N,n} \rangle\big\}_{n\in \mathbb Z^d}
  \;\;\mbox{for each}\;\;
  g\in L^2(\mathbb R^d).
 \end{eqnarray}

 \begin{lemma}\label{lem-feedback-norm}
  For each $N>0$,
  \begin{eqnarray*}
    \| K_N \|_{\mathcal L(L^2(\mathbb R^d),l^2(\mathbb Z^d))}=N^{-d/2}.
  \end{eqnarray*}
 \end{lemma}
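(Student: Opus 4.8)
The plan is to realize $K_N$ as (minus) the adjoint of the synthesis operator associated with the family $\{f_{N,n}\}_{n\in\mathbb Z^d}$, and then read off its norm from the orthogonality properties of this family recorded in Lemma \ref{lem-orth}. Concretely, I would define $S_N\colon l^2(\mathbb Z^d)\to L^2(\mathbb R^d)$ by $S_N\{a_n\}_{n\in\mathbb Z^d}:=\sum_{n\in\mathbb Z^d}a_nf_{N,n}$. By the conclusions (iii) and (ii) of Lemma \ref{lem-orth}, this series converges in $L^2(\mathbb R^d)$ for every $\{a_n\}_{n\in\mathbb Z^d}\in l^2(\mathbb Z^d)$ and satisfies $\|S_N\{a_n\}_{n\in\mathbb Z^d}\|=N^{-d/2}\|\{a_n\}_{n\in\mathbb Z^d}\|_{l^2(\mathbb Z^d)}$; in particular $S_N$ is bounded and $\|S_N\|_{\mathcal L(l^2(\mathbb Z^d),L^2(\mathbb R^d))}=N^{-d/2}$ exactly.

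Next I would compute the adjoint $S_N^*\colon L^2(\mathbb R^d)\to l^2(\mathbb Z^d)$. Since each $f_{N,n}$ is real-valued, for $g\in L^2(\mathbb R^d)$ and $\{a_n\}_{n\in\mathbb Z^d}\in l^2(\mathbb Z^d)$ one has $\langle S_N\{a_n\}_{n\in\mathbb Z^d},g\rangle=\sum_{n\in\mathbb Z^d}a_n\langle f_{N,n},g\rangle=\sum_{n\in\mathbb Z^d}a_n\langle g,f_{N,n}\rangle$, so $S_N^*g=\{\langle g,f_{N,n}\rangle\}_{n\in\mathbb Z^d}$. Comparing with the definition \eqref{def-feedback-N}, this identifies $K_N=-S_N^*$. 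The conclusion then follows from the standard fact that a bounded operator between Hilbert spaces and its adjoint have equal norm: $\|K_N\|_{\mathcal L(L^2(\mathbb R^d),l^2(\mathbb Z^d))}=\|S_N^*\|=\|S_N\|=N^{-d/2}$.

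I do not expect any genuine obstacle here; the argument is essentially bookkeeping on top of Lemma \ref{lem-orth}. The only two points needing a moment of care are (a) verifying that $K_N$ is literally $-S_N^*$, which is the short computation above and uses the $L^2$-convergence of the defining series of $S_N$ supplied by Lemma \ref{lem-orth}(iii); and (b) getting the norm to be \emph{exactly} $N^{-d/2}$ rather than merely $\le N^{-d/2}$, which is immediate from the isometry-type equality in Lemma \ref{lem-orth}(ii). If one prefers to bypass adjoints entirely, the same bound can be obtained directly: the estimate $\|K_Ng\|_{l^2(\mathbb Z^d)}\le N^{-d/2}\|g\|$ is Bessel's inequality for the orthonormal system $\{N^{d/2}f_{N,n}\}_{n\in\mathbb Z^d}$, while the matching lower bound comes from testing on $g=f_{N,0}$, for which $K_Nf_{N,0}=\{-N^{-d}\delta_{n,0}\}_{n\in\mathbb Z^d}$ has $l^2$-norm $N^{-d}=N^{-d/2}\|f_{N,0}\|$.
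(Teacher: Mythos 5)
Your primary argument — realizing $K_N$ as $-S_N^*$, where $S_N$ is the synthesis operator $\{a_n\}\mapsto\sum_n a_n f_{N,n}$, computing $\|S_N\|=N^{-d/2}$ exactly from Lemma~\ref{lem-orth}(iii), and invoking $\|S_N^*\|=\|S_N\|$ — is correct, but it is a slightly more abstract route than the one the paper takes. The paper argues directly: the upper bound $\|K_N g\|_{l^2}\le N^{-d/2}\|g\|$ is obtained by rewriting $\sum_n\langle g,f_{N,n}\rangle^2=N^{-d}\sum_n\langle g,N^{d/2}f_{N,n}\rangle^2$ and applying Bessel's inequality to the orthonormal family from Lemma~\ref{lem-orth}(ii), and the matching lower bound comes from testing on $g=N^{d/2}f_{N,0}$. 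This is exactly the ``bypass adjoints entirely'' alternative you sketch at the end of your proposal, and your computation there ($K_Nf_{N,0}$ has $l^2$-norm $N^{-d}$, $\|f_{N,0}\|=N^{-d/2}$) is the same estimate up to normalization. The adjoint route buys a cleaner reason for why the norm is \emph{exactly} $N^{-d/2}$ (it inherits the isometry-type identity of $S_N$), while the paper's direct approach is more elementary and closer to the definitions. One small caution in your adjoint computation: in complex $L^2$ one has $\langle f_{N,n},g\rangle=\overline{\langle g,f_{N,n}\rangle}$, so the identification $S_N^*g=\{\langle g,f_{N,n}\rangle\}_n$ requires the usual conjugate-linear pairing on $l^2$; in the real setting of the paper this is immaterial, and your conclusion stands either way.
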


 \begin{proof}
  Arbitrarily fix $N>0$.
  On one hand, it follows by (\ref{def-feedback-N}) and (ii) of Lemma \ref{lem-orth} in Appendix that for each $g\in L^2(\mathbb R^d)$,
  \begin{eqnarray*}
    \| K_N g \|_{l^2(\mathbb Z^d)}^2
    =  \sum_{n\in \mathbb Z^d}  \langle   g,   f_{N,n} \rangle^2
    =   N^{-d} \sum_{n\in \mathbb Z^d}   \langle   g,  N^{d/2} f_{N,n} \rangle^2
    \leq   N^{-d} \|g\|^2,
  \end{eqnarray*}
 which yields that
  \begin{eqnarray*}
  \| K_N \|_{\mathcal L(L^2(\mathbb R^d),l^2(\mathbb Z^d))}  \leq  N^{-d/2}.
  \end{eqnarray*}
  On the other hand, by making use of (\ref{def-feedback-N}) and (ii) of Lemma \ref{lem-orth} in Appendix again,
  we find  that
  \begin{eqnarray*}
   \| K_N \|_{\mathcal L(L^2(\mathbb R^d),l^2(\mathbb Z^d))}
   \geq
   \|K_N (N^{\frac{d}{2}}f_{N,0})\|=
   N^{-\frac{d}{2}}\|N^{\frac{d}{2}}f_{N,0}\|_{l^2(\mathbb{Z}^d)}^2=N^{-\frac{d}{2}}.
  \end{eqnarray*}
  Thus, the desired result follows at once. This ends the proof of Lemma \ref{lem-feedback-norm}.

 \end{proof}

 \begin{remark}
  Lemma \ref{lem-feedback-norm} implies that the bigger the density of the lattice points $\big\{ \frac{n}{N} \big\}_{n\in\mathbb Z^d}$ becomes, the smaller the norm of the feedback law $K_N$ is.

 \end{remark}

The main result of this section is the next theorem.
 \begin{theorem}\label{theorem4.6}
 There is $C_1(d)$ so that for each  $\varepsilon\in(0,1)$ and each $y_0\in L^2(\mathbb{R}^d)$,
 \begin{eqnarray*}
   \left\| y\left(T; N,y_0,K_N y(t; N,y_0,0)|_{t=\tau-}\right) \right\|\leq  \varepsilon \|y_0\|,
   \end{eqnarray*}
 when $N\geq C_1(d) \sqrt{\frac{1}{T-\tau} (1+\ln\frac{1}{\varepsilon}) }$.
  \end{theorem}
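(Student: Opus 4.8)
The plan is to write the controlled state in closed form and to exploit that the feedback $K_N$ is assembled precisely from the functions $f_{N,n}$, whose Fourier transforms are the characters $(2\pi)^{-d/2}N^{-d}e^{-i(n/N)\cdot\xi}$ truncated to the cube $Q_{\pi N}(0)$ (Lemma \ref{lem-orth}). Put $w:=e^{\tau\triangle}y_0$; by the representation \eqref{wang973.6} of Lemma \ref{lemma973.1} one has $y(t;N,y_0,0)|_{t=\tau-}=w$, and with the control $v:=K_Nw=\{-\langle w,f_{N,n}\rangle\}_{n\in\mathbb Z^d}$ the same representation gives
\[
y\big(T;N,y_0,K_Ny(t;N,y_0,0)|_{t=\tau-}\big)=e^{(T-\tau)\triangle}\big(w+B_Nv\big)\in L^2(\mathbb R^d).
\]
So the whole task is to bound $\|e^{(T-\tau)\triangle}(w+B_Nv)\|$, bearing in mind that $w+B_Nv$ lies a priori only in $H^{-s}(\mathbb R^d)$, $s>d/2$ (Lemma \ref{lemma973.1}(i)).

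The crucial point is that the feedback cancels the low frequencies exactly. Comparing Fourier transforms gives $\chi_{\leq N}(D)\delta_{N,n}=N^df_{N,n}$, so, extending $\chi_{\leq N}(D)$ (which maps $H^{-s}(\mathbb R^d)$ boundedly into $L^2(\mathbb R^d)$) and applying it term by term,
\[
\chi_{\leq N}(D)B_Nv=\sum_{n\in\mathbb Z^d}v_n\,\chi_{\leq N}(D)\delta_{N,n}=N^d\sum_{n\in\mathbb Z^d}v_nf_{N,n}=-N^d\sum_{n\in\mathbb Z^d}\langle w,f_{N,n}\rangle f_{N,n}.
\]
On the other hand $\{N^{d/2}f_{N,n}\}_{n\in\mathbb Z^d}$ is an orthonormal basis of $\mathscr P_N$ (Lemma \ref{lem-orth}), whence $\chi_{\leq N}(D)w=N^d\sum_{n\in\mathbb Z^d}\langle w,f_{N,n}\rangle f_{N,n}$. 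Adding the two displays yields $\chi_{\leq N}(D)(w+B_Nv)=0$; equivalently, the Fourier transform of $w+B_Nv$ is supported in $Q_{\pi N}^c$. (This cancellation is the control-theoretic counterpart of the Shannon sampling identity.)

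It follows that $\widehat{e^{(T-\tau)\triangle}(w+B_Nv)}(\xi)=e^{-(T-\tau)|\xi|^2}(\widehat w(\xi)+\widehat{B_Nv}(\xi))$ vanishes on $Q_{\pi N}(0)$, so by Plancherel
\[
\big\|e^{(T-\tau)\triangle}(w+B_Nv)\big\|^2\le 2\int_{Q_{\pi N}^c}e^{-2(T-\tau)|\xi|^2}|\widehat w|^2\,\mathrm d\xi+2\int_{Q_{\pi N}^c}e^{-2(T-\tau)|\xi|^2}|\widehat{B_Nv}|^2\,\mathrm d\xi .
\]
Since $|\xi|\ge\pi N$ on $Q_{\pi N}^c$ and $\|w\|\le\|y_0\|$, the first integral is at most $e^{-2(T-\tau)\pi^2N^2}\|y_0\|^2$. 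For the second I would use that $|\widehat{B_Nv}|^2$ is $2\pi N$-periodic in each variable, tile $\mathbb R^d$ by the cubes $Q_{\pi N}(2\pi Nk)$, $k\in\mathbb Z^d$, on which $|\xi|^2\ge\pi^2N^2|k|^2$, together with $\int_{Q_{\pi N}(0)}|\widehat{B_Nv}|^2=\|\chi_{\leq N}(D)B_Nv\|^2=N^{2d}\|\sum_nv_nf_{N,n}\|^2=N^d\|v\|_{l^2(\mathbb Z^d)}^2$ (last step by the isometry in Lemma \ref{lem-orth}); this gives
\[
\int_{Q_{\pi N}^c}e^{-2(T-\tau)|\xi|^2}|\widehat{B_Nv}|^2\,\mathrm d\xi\le N^d\|v\|_{l^2(\mathbb Z^d)}^2\sum_{k\in\mathbb Z^d\setminus\{0\}}e^{-2(T-\tau)\pi^2N^2|k|^2}\le C(d)\big(1+((T-\tau)N^2)^{-\frac d2}\big)e^{-(T-\tau)\pi^2N^2}N^d\|v\|_{l^2(\mathbb Z^d)}^2 .
\]
By Lemma \ref{lem-feedback-norm}, $\|v\|_{l^2(\mathbb Z^d)}=\|K_Nw\|_{l^2(\mathbb Z^d)}\le N^{-d/2}\|w\|\le N^{-d/2}\|y_0\|$, so $N^d\|v\|_{l^2(\mathbb Z^d)}^2\le\|y_0\|^2$, and collecting the estimates,
\[
\big\|y\big(T;N,y_0,K_Ny(t;N,y_0,0)|_{t=\tau-}\big)\big\|\le C(d)\big(1+((T-\tau)N^2)^{-\frac d4}\big)e^{-\frac12(T-\tau)\pi^2N^2}\|y_0\| .
\]
Choosing $C_1(d)$ large enough that $N\ge C_1(d)\sqrt{\tfrac1{T-\tau}(1+\ln\tfrac1\varepsilon)}$ forces $(T-\tau)N^2\ge1$ (so the bracket factor is $\le 2$) and $C(d)\,e^{-\frac12(T-\tau)\pi^2N^2}\le\tfrac14\varepsilon$ then completes the proof.

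The main obstacle is putting the exact low-frequency cancellation on a rigorous footing: one must check that $\sum_nv_nf_{N,n}$ converges in $L^2(\mathbb R^d)$ (it does, since $\{N^{d/2}f_{N,n}\}$ is orthonormal and $v\in l^2(\mathbb Z^d)$), that $\chi_{\leq N}(D)$ is well defined on $B_Nv\in H^{-s}(\mathbb R^d)$ and may be applied term by term, and that $\chi_{\leq N}(D)\delta_{N,n}=N^df_{N,n}$. Everything else — the Gaussian decay away from $Q_{\pi N}(0)$ for the non-$L^2$, frequency-periodic symbol $\widehat{B_Nv}$, handled by the cube-tiling and Gaussian-tail summation above, together with the final calibration of $C_1(d)$ — is routine.
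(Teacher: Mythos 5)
Your argument is correct, and it takes a genuinely different route from the paper. The paper proves Theorem \ref{theorem4.6} by duality: Lemma \ref{thm-eq-ob-control-feedback} pairs the controlled state against a solution of the adjoint (again heat) equation and shows that the feedback bound $\|y(T;\cdot)\|\le\varepsilon\|y_0\|$ is \emph{equivalent} to the asymptotic observability decomposition of Theorem \ref{thm-decomp}(i) applied over the time window $[\tau,T]$; one then simply calibrates $N$ so that the residual bound \eqref{wang6} is $\le\varepsilon$. You instead work entirely on the primal side: writing the controlled state as $e^{(T-\tau)\triangle}(w+B_Nv)$ with $w=e^{\tau\triangle}y_0$, you observe that the feedback $v=K_Nw$ cancels the low-frequency block exactly, $\chi_{\leq N}(D)(w+B_Nv)=0$, because $\chi_{\leq N}(D)\delta_{N,n}=N^df_{N,n}$ and $\{N^{d/2}f_{N,n}\}$ is an orthonormal basis of $\mathscr P_N$ (the completeness is implicit in Theorem \ref{lem-shannon}; cite that rather than Lemma \ref{lem-orth}, which only asserts orthonormality). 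The remaining high-frequency estimate via Gaussian decay, periodicity and tiling of $\widehat{B_Nv}$, and Lemma \ref{lem-feedback-norm} is routine and yields the slightly sharper exponent $e^{-\frac12(T-\tau)\pi^2N^2}$. Your route is more elementary and self-contained — it bypasses Lemma \ref{thm-eq-ob-control-feedback} and the whole of Theorem \ref{thm-f-d}/Proposition \ref{lem-smooth-l2-r} — and it exposes the mechanism (exact low-frequency annihilation by the feedback, the control-theoretic face of Shannon sampling) more transparently; the paper's route, on the other hand, is the natural one given that the observability identity is the paper's main object, and the duality lemma is reusable: it converts any improvement of the decomposition estimate directly into a controllability statement.
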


 To prove Theorem \ref{theorem4.6}, we need the next lemma.

 \begin{lemma}\label{thm-eq-ob-control-feedback}
  Let  $N>0$ and $\varepsilon>0$. Then the following two statements are equivalent:

  (i) For each $u_0\in L^2(\mathbb R^d)$,  there exists $R_{u_0} \in L^2(\mathbb R^d)$ with $\| R_{u_0} \| \leq   \varepsilon \|u_0\|$ so that
  \begin{eqnarray*}
u(T,\cdot) = \sum_{n\in \mathbb{Z}^d} u\big(T-\tau, {n}/{N}\big)
e^{\tau \Delta } f_{N,n}(\cdot) + R_{u_0}(\cdot)\;\;\mbox{in}\;\;L^2(\mathbb{R}^d),
\end{eqnarray*}
where $u$ solves Equation (\ref{equ-heat}) with  $u(0,x)=u_0(x)$, $x\in \mathbb{R}^d$.

  (ii) For each $y_0 \in L^2(\mathbb R^d)$,
  \begin{eqnarray*}
   \| y(T; N,y_0,v_{y_0}) \|\leq\varepsilon \|y_0\|,
  \end{eqnarray*}
  where $v_{y_0}:=K_N y(t; N,y_0,0)|_{t=\tau-}$.

 \end{lemma}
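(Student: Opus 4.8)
The plan is to recast both (i) and (ii) as statements about the operator norm of a bounded operator on $L^2(\mathbb R^d)$, and then observe that the two operators are Hilbert-space adjoints of each other, so their norms coincide. Throughout, fix $N>0$ and $\varepsilon>0$, and recall from the explicit representation \eqref{wang973.6} in Lemma \ref{lemma973.1} that $y(t;N,y_0,0)|_{t=\tau-}=e^{\tau\Delta}y_0$ and that, for any control $v$, $y(T;N,y_0,v)=e^{T\Delta}y_0+e^{(T-\tau)\Delta}B_Nv$.

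First I would reduce (i) to an operator estimate. For $u$ solving \eqref{equ-heat} with $u(0,\cdot)=u_0$ one has $u(T,\cdot)=e^{T\Delta}u_0$ and $u(T-\tau,n/N)=(e^{(T-\tau)\Delta}u_0)(n/N)$; by Theorem \ref{thm-decomp}(ii) these sampling values lie in $l^2(\mathbb Z^d)$, and by the near-orthogonality of $\{f_{N,n}\}$ (Lemma \ref{lem-orth}) together with the boundedness of $e^{\tau\Delta}$ the series $\sum_{n}(e^{(T-\tau)\Delta}u_0)(n/N)\,e^{\tau\Delta}f_{N,n}$ converges in $L^2(\mathbb R^d)$ and depends linearly and boundedly on $u_0$. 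Hence $\mathcal A_N u_0:=e^{T\Delta}u_0-\sum_{n}(e^{(T-\tau)\Delta}u_0)(n/N)\,e^{\tau\Delta}f_{N,n}$ defines a bounded operator on $L^2(\mathbb R^d)$; since the identity in (i) forces $R_{u_0}=\mathcal A_N u_0$, statement (i) is equivalent to $\|\mathcal A_N\|_{\mathcal L(L^2)}\le\varepsilon$. Symmetrically, by Lemma \ref{lemma973.1}(i) and the smoothing of the semigroup, $e^{(T-\tau)\Delta}B_N$ is bounded from $l^2(\mathbb Z^d)$ to $L^2(\mathbb R^d)$, so $\mathcal D_N:=e^{T\Delta}+e^{(T-\tau)\Delta}B_N K_N e^{\tau\Delta}$ is a bounded operator on $L^2(\mathbb R^d)$, and by the representation above $\mathcal D_N y_0=y\big(T;N,y_0,K_N y(t;N,y_0,0)|_{t=\tau-}\big)$; thus statement (ii) is equivalent to $\|\mathcal D_N\|_{\mathcal L(L^2)}\le\varepsilon$.

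The heart of the argument is then to show $\mathcal D_N=\mathcal A_N^{\,*}$. Factor $\mathcal A_N=e^{T\Delta}-e^{\tau\Delta}\circ\Sigma_N\circ E_N$, where $E_N u_0:=\{(e^{(T-\tau)\Delta}u_0)(n/N)\}_{n}=B_N^{\,*}e^{(T-\tau)\Delta}u_0$ is point evaluation after smoothing (bounded $L^2\to l^2$ by Proposition \ref{lem-smooth-l2-r}) and $\Sigma_N\{a_n\}:=\sum_{n}a_n f_{N,n}$ (bounded $l^2\to L^2$ by Lemma \ref{lem-orth}). Taking adjoints and using that $e^{t\Delta}$ is self-adjoint on $L^2(\mathbb R^d)$, that $(B_N^{\,*})^{*}=B_N$, and — crucially — that $\Sigma_N^{\,*}g=\{\langle g,f_{N,n}\rangle\}_{n}=-K_N g$ by the very definition \eqref{def-feedback-N} of the feedback law, one obtains $\mathcal A_N^{\,*}=e^{T\Delta}-e^{(T-\tau)\Delta}B_N(-K_N)e^{\tau\Delta}=e^{T\Delta}+e^{(T-\tau)\Delta}B_N K_N e^{\tau\Delta}=\mathcal D_N$. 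Equivalently, and perhaps more transparently, I would verify $\langle\mathcal A_N u_0,y_0\rangle=\langle u_0,\mathcal D_N y_0\rangle$ for all $u_0,y_0\in L^2(\mathbb R^d)$ by moving $e^{T\Delta}$, $e^{\tau\Delta}$, $e^{(T-\tau)\Delta}$ across the inner product and recognizing $(e^{(T-\tau)\Delta}u_0)(n/N)=\langle u_0,\,e^{(T-\tau)\Delta}\delta_{N,n}\rangle$. Since $\|\mathcal A_N\|=\|\mathcal A_N^{\,*}\|=\|\mathcal D_N\|$, the inequality $\|\mathcal A_N\|\le\varepsilon$ holds if and only if $\|\mathcal D_N\|\le\varepsilon$, which is precisely the asserted equivalence (i)$\Leftrightarrow$(ii).

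The one place where care is needed — the main obstacle — is the bookkeeping around $B_N$: it is only bounded into $H^{-s}(\mathbb R^d)$ with $s>d/2$, so one must check that $e^{(T-\tau)\Delta}B_N$ and $B_N^{\,*}e^{(T-\tau)\Delta}$ are genuine bounded operators between $l^2(\mathbb Z^d)$ and $L^2(\mathbb R^d)$ and are each other's Hilbert-space adjoints, i.e. that the $\langle H^{-s},H^{s}\rangle$ duality pairing is compatible with the $L^2$ inner product once the heat smoothing is inserted, and that $e^{(T-\tau)\Delta}u_0\in H^{s}(\mathbb R^d)$ makes the point values $(e^{(T-\tau)\Delta}u_0)(n/N)$ legitimate. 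These are exactly the ingredients supplied by Lemma \ref{lemma973.1}(i), Proposition \ref{lem-smooth-l2-r}, and the self-adjointness of $\{e^{t\Delta}\}$ on the full Bessel-potential scale; with them in hand, the remaining adjoint computation is routine.
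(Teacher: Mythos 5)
Your proposal is correct, and its core is the same duality argument as the paper's proof: the paper verifies, via the pairings $\langle B_Nv,\psi\rangle$ and the definition of $K_N$, exactly the identity $\langle y(T;N,y_0,v_{y_0}),u_0\rangle=\langle y_0,\,u(T,\cdot)-\sum_n u(T-\tau,n/N)e^{\tau\Delta}f_{N,n}\rangle$ and then takes suprema over unit balls in both directions, which is precisely your statement that the feedback solution map is the adjoint of the residual map and that adjoint operators have equal norms. Your operator-theoretic packaging ($\mathcal D_N=\mathcal A_N^{*}$, $\|\mathcal A_N\|=\|\mathcal A_N^{*}\|$) is just a cleaner formulation of the same computation, so no further comparison is needed.
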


 \begin{proof}
 Arbitrarily fix $N>0$ and $\varepsilon>0$.
 We organize the proof by the following two steps:

 \vskip 5pt
\noindent \textit{ Step 1. We show that (i)$\Longrightarrow$(ii).}

 Suppose that (i) holds. Arbitrarily fix $y_0\in L^2(\mathbb R^d)$. Set
 \begin{eqnarray}\label{def-v-y0-s1-1}
    v_{y_0} := K_N e^{\tau\Delta}y_0.
 \end{eqnarray}
 It is clear that
 \begin{eqnarray*}
   y(T; N,y_0,v_{y_0}) =  e^{T\Delta} y_0  + e^{(T-\tau)\Delta} B_N v_{y_0}.
 \end{eqnarray*}
 From this, (\ref{def-v-y0-s1-1}), (\ref{def-control-operator-N}) and (\ref{def-feedback-N}), it follows that for each $u_0\in L^2(\mathbb R^d)$,
 \begin{eqnarray*}
  & & \big\langle y(T; N,y_0,v_{y_0}),u_0 \big\rangle
  \nonumber\\
   &=&\langle y_0, e^{T\Delta} u_0 \rangle
   + \Big\langle B_N v_{y_0}, e^{(T-\tau)\Delta} u_0 \Big\rangle_{H^{-d}(\mathbb R^n),H^d(\mathbb R^n)}
   \nonumber\\
   &=&\langle y_0, u(T,\cdot) \rangle
   +  \sum_{n\in\mathbb Z^d}  u (T-\tau,n/N)\langle e^{\tau\Delta}y_0,-f_{N,n} \rangle
   \nonumber\\
   &=&\Big\langle y_0, u(T,\cdot)-\sum_{n\in \mathbb Z^d} u(T-\tau,n/N) e^{\tau\Delta} f_{N,n}(\cdot) \Big\rangle,
 \end{eqnarray*}
 where $u$ solves Equation (\ref{equ-heat}) with  $u(0,x)=u_0(x)$, $x\in \mathbb{R}^d$. The above, together with the statement (i), yields that
 \begin{eqnarray*}
   \|y(T; N,y_0,v_{y_0})\|
   = \sup_{\|u_0\|\leq 1}   \big\langle y(T; N,y_0,v_{y_0}), u_0 \big\rangle
   \leq  \sup_{\|u_0\|\leq 1}  \langle y_0, R_{u_0} \rangle
   \leq \varepsilon \|y_0\|,
 \end{eqnarray*}
 where $R_{u_0}$ is given by the statement (i).
 Hence,  the statement (ii) is true.

  \vskip 5pt
  \noindent \textit{Step 2. We prove  that (ii)$\Longrightarrow$(i).}

   Assume that (ii) is true. Arbitrarily fix $u_0\in L^2(\mathbb R^d)$. Write $u$ for the solution to Equation (\ref{equ-heat}) with  with  $u(0,x)=u_0(x)$, $x\in \mathbb{R}^d$. Then we get from (\ref{def-control-operator-N}) and (\ref{def-feedback-N}) that for each $y_0\in L^2(\mathbb R^d)$,
  \begin{eqnarray*}
  & &  \Big\langle y_0, u(T,\cdot)-\sum_{n\in \mathbb Z^d} u(T-\tau,n/N) e^{\tau\Delta} f_{N,n}(\cdot) \Big\rangle
  \nonumber\\
  &=& \langle y_0,e^{T\Delta} u_0 \rangle
   -  \sum_{n\in\mathbb Z^d} (e^{(T-\tau)\Delta} u_0)(n/N)\langle e^{\tau\Delta}y_0,f_{N,n} \rangle
   \nonumber\\
   &=& \langle e^{T\Delta}y_0, u_0 \rangle
  + \Big\langle B_N K_N e^{\tau\Delta}y_0, e^{(T-\tau)\Delta} u_0 \Big\rangle
  \nonumber\\
  &=& \big\langle y(T; N,y_0,\hat v_{y_0}), u_0  \big\rangle,
  \end{eqnarray*}
  where $\hat v_{y_0}:= K_N e^{\tau \Delta} y_0$.
  This, along with the statement  (ii), yields that
  \begin{eqnarray*}
  \|R_{u_0}\| &:= &  \| u(T,\cdot)-\sum_{n\in \mathbb Z^d} u(T-\tau,n/N) e^{\tau\Delta} f_{N,n}(\cdot)\|
   \nonumber\\
   &=& \sup_{\|y_0\|\leq 1}  \Big\langle y_0, u(T,\cdot)-\sum_{n\in \mathbb Z^d} u(T-\tau,n/N) e^{\tau\Delta} f_{N,n}(\cdot) \Big\rangle
   \nonumber\\
   &=& \sup_{\|y_0\|\leq 1}   \big\langle y(T; N,y_0,\hat v_{y_0}), u_0(\cdot) \big\rangle
   \leq \varepsilon \|u_0\|,
  \end{eqnarray*}
  which leads to the statement (i).

  Hence, we end the proof of Lemma \ref{thm-eq-ob-control-feedback}.
 \end{proof}

 \begin{proof}[Proof of Theorem \ref{theorem4.6}]

  Let $C(d)$ be the constant  given by (i) of Theorem \ref{thm-decomp}.
  Arbitrarily fix  $\varepsilon\in(0,1)$.
 It is clear that there is  $C_1(d)$ so that
  \begin{eqnarray}\label{wang4.92022}
  C_1(d)\sqrt{\frac{1}{T-\tau} \Big(1+\ln\frac{1}{\varepsilon}\Big) }\geq \sqrt{\frac{1}{T-\tau} \Big(\ln 3 + |\ln (2C(d))|+\ln\frac{1}{\varepsilon} \Big) }.
  \end{eqnarray}
  Arbitrarily take $N$ so that
   \begin{eqnarray}\label{equ-920-1}
      N\geq    C_1(d)\sqrt{\frac{1}{T-\tau} \Big(1+\ln\frac{1}{\varepsilon}\Big) }.
  \end{eqnarray}
By \eqref{wang4.92022} and \eqref{equ-920-1}, we can easily check that
  \begin{eqnarray}\label{last-pf-def-aug-3}
       C(d)\left(1+(\sqrt{T-\tau}N)^{-\frac{d}{2}}\right)e^{-(T-\tau)N^2} \leq 2C(d)e^{-(T-\tau)N^2}\leq \varepsilon.
  \end{eqnarray}
Then by (i) of Theorem \ref{thm-decomp} and (\ref{last-pf-def-aug-3}), we
  see that the statement (i) of Lemma \ref{thm-eq-ob-control-feedback}
  is true for the  above $(N,\varepsilon)$. Thus,  by Lemma \ref{thm-eq-ob-control-feedback}, we get the statement (ii) of Lemma \ref{thm-eq-ob-control-feedback},
  which is exactly the desired result. This ends the proof of Theorem \ref{theorem4.6}.
 \end{proof}

 \begin{remark}\label{wangremark3.3}
(i) Theorem \ref{theorem4.6} gives a special kind of feedback null approximate controllability
for the system (\ref{heat-impulsive-control-system-N}). It
 says that  for each  $\varepsilon\in(0,1)$ and $y_0\in L^2(\mathbb{R}^d)$,
 by taking $N\geq C_1(d) \sqrt{\frac{1}{T-\tau} (1+\ln\frac{1}{\varepsilon}) }$,
 the system (\ref{heat-impulsive-control-system-N}), with the control operator $B_N$
 and the feedback control $v=K_N y(t; N,y_0,0)|_{t=\tau-}$, drives  $y_0$   at time $0$ into the ball $B_{\varepsilon\|y_0\|}(0)\subset L^2(\mathbb{R}^d)$ at time $T$. (According to Lemma \ref{lemma973.1}, $y(t; N,y_0,0)|_{t=\tau-}$ exists in
 $L^2(\mathbb{R}^d)$.)
 This controllability differs from usual approximate controllability/ null approximate controllability
   from the following perspectives: First, in our case, the  control operator depends on  $\varepsilon$, while
 in
  usual ones  (see, for instance, \cite{AE}, \cite{AEWZ},
  \cite{FZ1}, \cite{FZ}, \cite{LZZ},
  \cite{PWX}  and \cite{QW}),  control operators are independent of $\varepsilon$;
  Second, our controls are active only on some lattice points in the space, while controls in usual cases are active on open
  or measurable subsets of spaces;
  Third, in our case, control has a feedback form, while in
  usual ones, controls are open-looped.

(ii)
The following statement  is not true: There is  $N>0$ so that
\begin{eqnarray*}
   \| y(T; N,y_0,K_N y(t; N,y_0,0)|_{t=\tau-}) \|_{L^2(\mathbb R^d)}  \leq  \varepsilon \|y_0\|
   \;\;\mbox{for each}\;\;
   \varepsilon>0
   \;\;\mbox{and}\;\;
   y_0\in L^2(\mathbb{R}^d).
  \end{eqnarray*}
 This can be easily proved by Lemma \ref{thm-eq-ob-control-feedback} and the conclusion (iii) of Theorem \ref{thm-decomp}. We omit the details.
  \end{remark}

At the end of this section, we give an application of  Theorem \ref{thm-asym-ob-balls} to some feedback controllability. Let $N>0$ and  $r>0$.
Define $\mathcal{H}_{N,r}:=\{ n\in \mathbb{Z}^d\;:\; |n/N|<r\}$ and write $M_{N,r}$
for the number of all elements in $\mathcal{H}_{N,r}$. Next, we define a control operator $B_{N,r}$ from $\mathbb R^{M_{N,r}}$ to $H^{-d}(\mathbb R^d)$ in the following manner:
 \begin{eqnarray*}\label{def-control-operator-N-r}
  B_{N,r}v := \sum_{n\in \mathcal{H}_{N,r}} v_n \delta(\cdot-n/N)
  \;\;\mbox{for each}\;\;
  v=\{v_n\}_{n\in \mathcal{H}_{N,r}} \in \mathbb R^{M_{N,r}},
 \end{eqnarray*}
 and define a feedback law  $K_{N,r}: L^2(\mathbb R^d)\rightarrow \mathbb R^{M_{N,r}}$ by
 \begin{eqnarray*}\label{def-feedback-N-r}
  K_{N,r} g :=  \Big( \langle g, -f_{N,n} \rangle \Big)_{n\in \mathcal{H}_{N,r}}
  \;\;\mbox{for each}\;\;
  g\in L^2(\mathbb R^d).
 \end{eqnarray*}
  We consider the control system (\ref{heat-impulsive-control-system-N}) with $B_N$ being replaced by $B_{N,r}$, and write $y_{N,r}(\cdot,\cdot;y_0,v)$ for its solution. The next Theorem \ref{theorem4.7-added} gives an application
  of Theorem \ref{thm-asym-ob-balls} to some feedback controllability. Its proof
    is quite similar to that of Theorem \ref{theorem4.6}  and will be omitted.

\begin{theorem}\label{theorem4.7-added}
There is $C(d)>0$ so that for each  $\varepsilon\in(0,1)$ and each $y_0\in L^2(\mathbb{R}^d)$,
 \begin{eqnarray*}
  \left( \int_{\mathbb R^d} (1+|x|)^{-2} \Big| y_{N,r}\Big(T,x; y_0,K_{N,r} y_{N,r}(\cdot,t; y_0,0)|_{t=\tau-}\Big)  \Big|^2  \mathrm dx \right)^{\frac{1}{2}}
   \leq  \varepsilon \|y_0\|,
   \end{eqnarray*}
 when $N\geq C(d) \sqrt{\frac{1}{T-\tau} (1+\ln\frac{1}{\varepsilon}) }$ and $r\geq C(d)(1+ T^{\frac{d}{2}}) (1+T^{-\frac{1}{2}}) \varepsilon^{-1}$.
  \end{theorem}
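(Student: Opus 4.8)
The plan is to follow the proof of Theorem~\ref{theorem4.6} line for line in structure, replacing Theorem~\ref{thm-decomp} by the weak identity of Theorem~\ref{thm-asym-ob-balls} and replacing the plain $L^2$-norm on the state side by the weighted norm $\|g\|_{\ast}:=\big(\int_{\mathbb R^d}(1+|x|)^{-2}|g(x)|^2\,\mathrm dx\big)^{1/2}$, which is dual to the weight $(1+|x|)^2$ appearing on the data side of Theorem~\ref{thm-asym-ob-balls}. Concretely one uses $\|g\|_{\ast}=\sup\{|\langle g,\phi\rangle|:\int_{\mathbb R^d}(1+|x|)^2|\phi(x)|^2\,\mathrm dx\le 1\}$, and notes that every state $y_{N,r}(T,\cdot;y_0,v)=e^{T\Delta}y_0+e^{(T-\tau)\Delta}B_{N,r}v$ lies in $L^2(\mathbb R^d)\subset\{\,\|\cdot\|_{\ast}<\infty\,\}$ (the second term is smooth since $T-\tau>0$ and $B_{N,r}v\in H^{-d}(\mathbb R^d)$, the latter being the trivial finite-sum analogue of Lemma~\ref{lemma973.1}(i); well-posedness of $y_{N,r}$ is the finite-sum analogue of Lemma~\ref{lemma973.1}(ii)).

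\textbf{Step 1: a finite-point analogue of Lemma~\ref{thm-eq-ob-control-feedback}.} For fixed $N>0$, $r>0$, $\varepsilon>0$ I would prove the equivalence of: (i) for each $u_0$ with $\int_{\mathbb R^d}(1+|x|)^2|u_0(x)|^2\,\mathrm dx<\infty$ there is $R_{u_0}\in L^2(\mathbb R^d)$ with $\|R_{u_0}\|\le\varepsilon\big(\int_{\mathbb R^d}(1+|x|)^2|u_0(x)|^2\,\mathrm dx\big)^{1/2}$ and $u(T,\cdot)=\sum_{n\in\mathcal H_{N,r}}u(T-\tau,n/N)\,e^{\tau\Delta}f_{N,n}+R_{u_0}$ in $L^2(\mathbb R^d)$, where $u$ solves \eqref{equ-heat} with $u(0,\cdot)=u_0$; and (ii) for each $y_0\in L^2(\mathbb R^d)$, $\big\|y_{N,r}(T,\cdot;y_0,v_{y_0})\big\|_{\ast}\le\varepsilon\|y_0\|$ with $v_{y_0}:=K_{N,r}\,y_{N,r}(\cdot,t;y_0,0)|_{t=\tau-}$. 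The argument copies Lemma~\ref{thm-eq-ob-control-feedback}: for a test function $\phi$ let $u$ solve \eqref{equ-heat} with $u(0,\cdot)=\phi$; using $y_{N,r}(\cdot,t;y_0,0)|_{t=\tau-}=e^{\tau\Delta}y_0$, the definitions of $B_{N,r}$ and $K_{N,r}$, and $(e^{(T-\tau)\Delta}\phi)(n/N)=u(T-\tau,n/N)$, one obtains $\langle y_{N,r}(T,\cdot;y_0,v_{y_0}),\phi\rangle=\big\langle y_0,\;u(T,\cdot)-\sum_{n\in\mathcal H_{N,r}}u(T-\tau,n/N)e^{\tau\Delta}f_{N,n}\big\rangle$. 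Taking the supremum over $\|y_0\|\le 1$ gives (ii)$\Rightarrow$(i); taking the supremum over $\phi$ with $\int(1+|x|)^2|\phi|^2\le 1$ and invoking the duality formula for $\|\cdot\|_{\ast}$ gives (i)$\Rightarrow$(ii).

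\textbf{Step 2: verifying (i) from Theorem~\ref{thm-asym-ob-balls}.} Given $u_0$ with finite weighted norm, apply Theorem~\ref{thm-asym-ob-balls} to the solution on $[0,T-\tau]$ (i.e. with the ``$T$'' there replaced by $T-\tau$): there is $\widetilde R\in L^2(\mathbb R^d)$ with $\|\widetilde R\|\le C(d)\big(1+((T-\tau)N^2)^{-d/4}\big)\big(e^{-(T-\tau)N^2}+(1+(T-\tau)^{d/2})(1+(T-\tau)^{-1/2})r^{-1}\big)\big(\int(1+|x|)^2|u_0|^2\big)^{1/2}$ and $u(T-\tau,\cdot)=\sum_{n\in\mathcal H_{N,r}}u(T-\tau,n/N)f_{N,n}+\widetilde R$ in $L^2(\mathbb R^d)$. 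Applying the $L^2$-contraction $e^{\tau\Delta}$ and setting $R_{u_0}:=e^{\tau\Delta}\widetilde R$ yields the identity in (i) with $\|R_{u_0}\|\le\|\widetilde R\|$. It then remains to pick $C(d)$ in the statement large enough that the hypotheses force this prefactor to be $\le\varepsilon$: the condition $N\ge C(d)\sqrt{\tfrac1{T-\tau}(1+\ln\tfrac1\varepsilon)}$ makes $\big(1+((T-\tau)N^2)^{-d/4}\big)e^{-(T-\tau)N^2}\le\varepsilon/2$ (as in the estimate leading to \eqref{last-pf-def-aug-3}, where the ``$1+$'' in $1+\ln\tfrac1\varepsilon$ absorbs the polynomial prefactor), while the condition $r\ge C(d)(1+T^{d/2})(1+T^{-1/2})\varepsilon^{-1}$, using $T>T-\tau>0$ so that $1+T^{d/2}\ge 1+(T-\tau)^{d/2}$ and enlarging $C(d)$ to dominate the fixed factor $1+(T-\tau)^{-1/2}$, makes the second term $\le\varepsilon/2$. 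By Step 1 this gives statement (ii), which is exactly the assertion of Theorem~\ref{theorem4.7-added}.

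\textbf{Main obstacle.} The semigroup bookkeeping, the finite-sum versions of Lemma~\ref{lemma973.1}, and the duality algebra are routine copies of the proof of Theorem~\ref{theorem4.6}. The one genuinely new ingredient, which I would check carefully, is that the residual in Theorem~\ref{thm-asym-ob-balls} is controlled by $\big(\int(1+|x|)^2|u_0|^2\big)^{1/2}$ rather than by $\|u_0\|$, so the test functions in the duality must be taken in $L^2\big((1+|x|)^2\,\mathrm dx\big)$; this is precisely what pushes the weight $(1+|x|)^{-2}$ onto the controlled state and explains why the conclusion is stated in $\|\cdot\|_{\ast}$ rather than in $L^2(\mathbb R^d)$. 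The only other mild nuisance is the bookkeeping of $T$ versus $T-\tau$ in the hypotheses, which is reconciled by monotonicity of the polynomial factors and by enlarging $C(d)$.
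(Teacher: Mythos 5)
Your reconstruction follows the route the paper explicitly signals ("Its proof is quite similar to that of Theorem~\ref{theorem4.6} and will be omitted"): a finite-sum duality lemma paralleling Lemma~\ref{thm-eq-ob-control-feedback}, in which the state-side norm is the weighted one dual to $L^2((1+|x|)^2\,\mathrm dx)$ under the $L^2$ pairing, followed by an application of Theorem~\ref{thm-asym-ob-balls} at the shifted observation time $T-\tau$ and an $L^2$-contraction by $e^{\tau\Delta}$. The duality algebra in your Step~1, with $\langle y_{N,r}(T;y_0,v_{y_0}),\phi\rangle=\langle y_0,\,u(T,\cdot)-\sum_{n\in\mathcal H_{N,r}} u(T-\tau,n/N)\,e^{\tau\Delta}f_{N,n}\rangle$, is correct, and you rightly identify the one genuinely new point: because the residual in Theorem~\ref{thm-asym-ob-balls} is measured against $\big(\int(1+|x|)^2|u_0|^2\big)^{1/2}$, the test class must be $\{\phi:\int(1+|x|)^2|\phi|^2\le 1\}$, which forces the dual weight $(1+|x|)^{-2}$ onto the controlled state. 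Your reading of the $N$-condition (the $+1$ inside $1+\ln\frac1\varepsilon$ ensures $(T-\tau)N^2\ge 1$, so $1+((T-\tau)N^2)^{-d/4}\le 2$, after which $e^{-(T-\tau)N^2}\lesssim\varepsilon$) matches \eqref{last-pf-def-aug-3}.

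The one place I would not sign off as written is the disposal of the $T$-versus-$(T-\tau)$ mismatch in the $r$-condition. Applying Theorem~\ref{thm-asym-ob-balls} at time $T-\tau$ produces the factor $\big(1+(T-\tau)^{d/2}\big)\big(1+(T-\tau)^{-1/2}\big)$, whereas the theorem's hypothesis reads $r\ge C(d)\,(1+T^{d/2})(1+T^{-1/2})\,\varepsilon^{-1}$. You correctly note $1+T^{d/2}\ge 1+(T-\tau)^{d/2}$, but the other factor goes the wrong way: $1+T^{-1/2}\le 1+(T-\tau)^{-1/2}$, and the ratio $\big(1+(T-\tau)^{-1/2}\big)/\big(1+T^{-1/2}\big)$ is unbounded as $\tau\uparrow T$, so it cannot be absorbed into a constant that depends only on $d$. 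Your phrase "enlarging $C(d)$ to dominate the fixed factor $1+(T-\tau)^{-1/2}$" silently lets $C$ depend on the fixed $T,\tau$. That is consistent with how the section is set up ($T>\tau>0$ are fixed once and for all), but it clashes with the paper's stated convention that $C(d)$ depends only on $d$; strictly speaking the $r$-threshold should carry $T-\tau$ rather than $T$ (or $C$ should be allowed to depend on $\tau$). This is an imprecision you inherited from the statement rather than a defect of your own argument, but it deserves to be flagged explicitly rather than waved away. With that caveat made precise (replace $T$ by $T-\tau$ in the $r$-bound, or record the $\tau$-dependence of the constant), the proof is complete.

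Two small hygiene points worth recording: (a) Theorem~\ref{thm-asym-ob-balls} assumes $r\ge 1$; this follows from the $r$-hypothesis once $C(d)\ge 1$, but say so. (b) Confirm that the achieving element in your duality formula for $\|\cdot\|_{\ast}$, namely $\phi=(1+|x|)^{-2}g$, lies in $L^2((1+|x|)^2\,\mathrm dx)$ whenever $\|g\|_{\ast}<\infty$ and that $L^2((1+|x|)^2\,\mathrm dx)\subset L^2(\mathbb R^d)$, so that the pairing with the $L^2$-valued state is licit; both are immediate, but they are the hinge of Step~1 and should be stated.
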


\begin{remark}\label{wang4.81021}
Theorem \ref{theorem4.7-added} says that
for any  $\varepsilon\in(0,1)$ and $y_0\in L^2(\mathbb{R}^d)$,
 by taking $N\geq C(d) \sqrt{\frac{1}{T-\tau} (1+\ln\frac{1}{\varepsilon}) }$ and $r\geq C(d) (1+ T^{\frac{d}{2}}) (1+T^{-\frac{1}{2}}) \varepsilon^{-1}$,
 the system (\ref{heat-impulsive-control-system-N}), with the control operator $B_{N,r}$
 and the feedback control $v=K_{N,r} y(t; N,y_0,0)|_{t=\tau-}$, drives  $y_0$   at time $0$ into the closed ball in the weight space:  $\{f
 ~:~  \int_{\mathbb{R}^d}(1+|x|)^{-2}|f(x)|^2 \mathrm dx <+\infty\}$, centered at the origin and of radius $\varepsilon\|y_0\|$, at time $T$.
\end{remark}

\section{Appendix}

The following results   present some properties
on the family
$\{f_{N,n}\}_{n\in \mathbb{Z}^d}$ (given by  (\ref{equ-basis})).

\begin{lemma}\label{lem-orth}
Let $N>0$.
Then the following conclusions are true:

 (i) For each  $n\in \mathbb{Z}^d$,
  \begin{eqnarray}\label{equ-fourier-1}
  \widehat{f_{N,n}}(\xi)=(2\pi)^{-\frac{d}{2}} N^{-d} e^{-i\frac{n}{N}\cdot\xi}
  \chi_{Q_{\pi N}(0)}(\xi)\;\;\mbox{for all}\;\;\xi\in \mathbb{R}^d.
    \end{eqnarray}

  (ii) The family  $\{N^{d/2}f_{N,n}\}_{n\in \mathbb{Z}^d}$ is an orthonormal set  in $L^2(\mathbb{R}^d)$.

      (iii)    For each  $\{a_n\}_{n\in \mathbb{Z}^d}\in l^2(\mathbb{Z}^d)$,
 the series $\sum_{n\in \mathbb{Z}^d}a_n f_{N,n}$
 converges in $L^2(\mathbb{R}^d)$ and
\begin{align}\label{equ-orth-3}
\Big\|\sum_{n\in \mathbb{Z}^d}a_n f_{N,n}\Big\| = N^{-d/2} \Big(
\sum_{n\in \mathbb{Z}^d} |a_n|^2 \Big)^{1/2}.
\end{align}

\end{lemma}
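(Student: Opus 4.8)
The plan is to prove (i) by a direct computation with Fourier transforms, deduce (ii) from (i) via the Parseval--Plancherel formula, and obtain (iii) from (ii) by an elementary Hilbert-space argument.

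First I would establish (i). Since $f_{N,n}$ is a tensor product, $f_{N,n}(x)=\prod_{j=1}^{d}g(Nx_j-n_j)$ with $g(t):=\frac{\sin\pi t}{\pi t}$, and the Fourier transform on $\mathbb R^d$ factorizes over such products, it suffices to work in one dimension. There one starts from the elementary identity $(2\pi)^{-1/2}\int_{-\pi}^{\pi}e^{it\xi}\,\mathrm d\xi=\sqrt{2\pi}\,g(t)$, which identifies $\mathcal F^{-1}(\chi_{[-\pi,\pi]})=\sqrt{2\pi}\,g$, equivalently $\widehat g=(2\pi)^{-1/2}\chi_{[-\pi,\pi]}$ (read in the sense of tempered distributions, since $g\in L^2(\mathbb R)\setminus L^1(\mathbb R)$). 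Applying the dilation rule $\widehat{g(N\,\cdot\,)}(\xi)=N^{-1}\widehat g(\xi/N)$ and then the translation rule $\widehat{h(\,\cdot\,-a)}(\xi)=e^{-ia\xi}\widehat h(\xi)$ with $h(t)=g(Nt)$ and $a=n/N$ yields the one-dimensional case $\widehat{g(N\,\cdot\,-n)}(\xi)=(2\pi)^{-1/2}N^{-1}e^{-in\xi/N}\chi_{[-\pi N,\pi N]}(\xi)$. Taking the product over $j=1,\dots,d$ gives \eqref{equ-fourier-1}.

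Next, for (ii) I would compute $\langle N^{d/2}f_{N,n},N^{d/2}f_{N,m}\rangle=N^{d}\langle\widehat{f_{N,n}},\widehat{f_{N,m}}\rangle$ using the Parseval--Plancherel formula and \eqref{equ-fourier-1}. Both $\widehat{f_{N,n}}$ and $\widehat{f_{N,m}}$ are supported on $Q_{\pi N}(0)$, so after the substitution $\xi=N\eta$ the inner product reduces to $(2\pi)^{-d}\prod_{j=1}^{d}\int_{-\pi}^{\pi}e^{-i(n_j-m_j)\eta_j}\,\mathrm d\eta_j$; each factor equals $2\pi$ when $n_j=m_j$ and $0$ otherwise, since $n_j-m_j\in\mathbb Z$. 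Hence the inner product is $\delta_{nm}$, which is exactly the asserted orthonormality of $\{N^{d/2}f_{N,n}\}_{n\in\mathbb Z^d}$.

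Finally, (iii) follows from (ii) by pure Hilbert-space reasoning. Writing $c_n:=N^{-d/2}a_n$ and $e_n:=N^{d/2}f_{N,n}$, the partial sums $\sum_{|n|\leq M}c_ne_n$ form a Cauchy sequence in $L^2(\mathbb R^d)$ because, by orthonormality, $\big\|\sum_{M'<|n|\leq M}c_ne_n\big\|^2=\sum_{M'<|n|\leq M}|c_n|^2\to0$ as $M,M'\to\infty$, using $\{a_n\}\in l^2(\mathbb Z^d)$; so $\sum_n a_nf_{N,n}=\sum_n c_ne_n$ converges in $L^2(\mathbb R^d)$, and again by orthonormality $\big\|\sum_n a_nf_{N,n}\big\|^2=\sum_n|c_n|^2=N^{-d}\sum_n|a_n|^2$, which is \eqref{equ-orth-3}. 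The argument is almost entirely routine; the only point requiring care is part (i), where one must keep the normalization of the Fourier transform and the dilation/translation factors consistent (and remember that $g$ is only a tempered distribution, not an $L^1$ function). I do not expect any genuine obstacle.
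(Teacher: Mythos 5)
Your proof is correct and follows essentially the same route as the paper: compute $\widehat{f_{N,n}}$ from the known Fourier transform of the sinc function via scaling and translation, then use Parseval--Plancherel to reduce orthonormality to an elementary integral over $Q_{\pi N}(0)$, and finish (iii) by the standard completeness argument for orthonormal systems in a Hilbert space (which the paper simply cites from Yosida). Your factorization into one-dimensional pieces and the remark that $g\in L^2\setminus L^1$ so the identity must be read in $\mathcal S'$ are harmless organizational refinements, not a different argument.
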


\begin{proof}
(i)   Define
\begin{eqnarray*}
 f(x):= \prod_{j=1}^d\frac{\sin \pi x_j}{\pi x_j}\;\;\mbox{for each}\;\;x=(x_1,\dots,x_d)\in \mathbb{R}^d.
\end{eqnarray*}
By some direct computations, we obtain that
\begin{eqnarray*}
f(x) = (2\pi)^{-d} \int_{\mathbb R^d} e^{-ix\cdot\xi} \chi_{Q_{\pi}(0)}(\xi) \mathrm d\xi \;\;\mbox{for all}\;\;x=(x_1,\dots,x_d)\in \mathbb{R}^d.
\end{eqnarray*}
Then by the inverse Fourier transform, we get that
\begin{eqnarray*}
 \hat f(\xi)= (2\pi)^{-\frac{d}{2}}
  \chi_{Q_{\pi}(0)}(\xi)\;\;\mbox{for all}\;\;\xi\in \mathbb{R}^d.
\end{eqnarray*}
From this, (\ref{equ-basis}) and the properties on the Fourier transform related to the translation and  the scaling,  we obtain (\ref{equ-fourier-1}). So the conclusion (i) is true.

(ii)
By  the Parseval-Plancherel formula  and the conclusion (i) of this lemma, we see that
\begin{eqnarray*}\label{equ-orth-2}
\langle f_{N,n},f_{N,n'} \rangle
&=&\int_{\mathbb{R}^d} \widehat{f_{N,n}}(\xi) \overline{\widehat{f_{N,n'}}(\xi)} \,\mathrm d\xi
\nonumber\\
&=&(2\pi )^{-d}N^{-2d} \int_{\xi\in Q_{\pi N}(0)} e^{-i\frac{(n-n')}{N}\cdot\xi} \,\mathrm d\xi
 \nonumber\\
&=&
\begin{cases}
0, \quad \quad \, \,n\neq n';\\
N^{-d}, \quad n=n'.
\end{cases}
\end{eqnarray*}
From this, the conclusion (ii) follows.

(iii) The conclusion (iii) follows from (ii) of this lemma and Corollary 2 in Section 4 of Chapter III in \cite{Yosida} (see Page 88 in \cite{Yosida}). This  ends the proof of Lemma \ref{lem-orth}.

\end{proof}

We now give the proof of Theorem  \ref{lem-shannon}.

\begin{proof}[Proof of Theorem  \ref{lem-shannon}]
Arbitrarily fix $N>0$ and $f\in \mathscr{P}_{N}$. We claim that
\begin{align}\label{equ-shannon-3}
\widehat{f}(\xi) = \sum_{n\in \mathbb{Z}^d} f\left({n}/{N}\right) (2\pi )^{-\frac{d}{2}} N^{-d} e^{-i\frac{n}{N}\cdot\xi}
  \chi_{Q_{\pi N}(0)}(\xi)\;\;\mbox{for each}\;\;\xi\in\mathbb R^d.
\end{align}
 First,  $\left\{(2\pi N)^{-\frac{d}{2}}e^{i\frac{n\cdot\xi}{N}}\right\}_{n\in\mathbb{Z}^d}$ is a complete
orthonormal system of $L^2\big(Q_{\pi N}(0)\big)$. Second,  the restriction of the function $\widehat{f}$ over $Q_{\pi N}(0)$ is in $L^2\big(Q_{\pi N}(0)\big)$.
By these two facts, we see that
\begin{align}\label{equ-shannon-4}
\widehat{f}(\xi)=\sum_{n\in \mathbb{Z}^d} a_n (2\pi N)^{-\frac{d}{2}}e^{i\frac{n\cdot\xi}{N}}\;\;\mbox{for each}\;\;\xi\in Q_{\pi N}(0),
\end{align}
where
\begin{eqnarray*}
a_n = \int_{Q_{\pi N}(0)} (2\pi N)^{-\frac{d}{2}} e^{-i\frac{n\cdot\xi}{N}}
\widehat{f}(\xi)\,\mathrm d\xi.
\end{eqnarray*}
From the above and  Fourier's inversion theorem, one can easily see that
\begin{align}\label{equ-shannon-5}
a_n = N^{-\frac{d}{2}} f\left(-{n}/{N}\right).
\end{align}
Now, (\ref{equ-shannon-3}) follows from (\ref{equ-shannon-4}), (\ref{equ-shannon-5})
and the fact that $\mbox{supp}\widehat{f}\subset Q_{\pi N}(0)$.

Next, the first equality in \eqref{equ-shannon} follows from  (\ref{equ-shannon-3}),
(i) of Lemma \ref{lem-orth} (i.e., (\ref{equ-fourier-1}))
and Fourier's inversion theorem.

Finally, by the first equality in \eqref{equ-shannon} and (iii) of Lemma \ref{lem-orth}, we get
the second equality in \eqref{equ-shannon}. This ends the proof of Theorem \ref{lem-shannon}.

\end{proof}

 \end{document}